 \newtheorem{theorem}{Theorem}
 \newtheorem{lemma}[theorem]{Lemma}
 \newtheorem{proposition}[theorem]{Proposition}
 \newtheorem{corollary}[theorem]{Corollary}
 \theoremstyle{definition}
 \theoremstyle{remark}
 \newcommand{\mc}{\mathcal}
\newcommand{\Tau}{\mathcal{T}}
 \newcommand{\B}{\mc{B}}
 \newcommand{\D}{\mc{D}}
 \newcommand{\G}{\mc{G}}
 \newcommand{\Sg}{\mc{S}}
 \newcommand{\M}{\mc{M}}
 \newcommand{\C}{\mathbb{C}}
 \newcommand{\R}{\mathbb{R}}
 \newcommand{\Z}{\mathbb{Z}}
 \newcommand{\p}{\varphi}
\newcommand{\dz}{\text{\rm d}z}
 \newcommand{\dt}{\text{\rm d}t}
 \newcommand{\dx}{\text{\rm d}x}
 \newcommand{\dy}{\text{\rm d}y}
    \renewcommand{\d}{\text{\rm d}}
\newcommand{\es}[1]{\begin{equation}\begin{split}#1\end{split}\end{equation}}
\newcommand{\est}[1]{\begin{equation*}\begin{split}#1\end{split}\end{equation*}}
\newcommand{\ov}{\overline}
\renewcommand{\H}{\mc{H}}
\newcommand{\im}{{\rm Im}\,}
\newcommand{\re}{{\rm Re}\,}
\newcommand{\ep}{\varepsilon}
\newcommand{\al}{\alpha}
 \def\today{\ifcase\month\or
  January\or February\or March\or April\or May\or June\or
  July\or August\or September\or October\or November\or December\fi
  \space\number\day, \number\year}
\title{Interpolation Formulas with derivatives in de Branges Spaces II}
\author{Felipe Gon\c{c}alves and Friedrich Littmann}
\subjclass[2010]{46E22, 42C15, 41A05, 30D10.}
\keywords{Band-limited, exponential type, entire function, interpolation, de Branges spaces, frames}
\address{University of Alberta, Mathematical and Statistical Sciences, 632 Central Academic Building, Edmonton, Alberta, Canada T6G 2G1.}
\email{felipe.goncalves@ualberta.ca}
\address{Department of mathematics, North Dakota State University, Fargo, ND 58105-5075.}
\email{friedrich.littmann@ndsu.edu}
\begin{document}
 
\begin{abstract} We investigate necessary and sufficient conditions under which entire functions in de Branges spaces can be recovered from function values and values of derivatives. Our main focus is on spaces with a structure function whose logarithmic derivative is bounded in the upper half-plane.
\end{abstract}

\maketitle

\numberwithin{equation}{section}


\section{Introduction}

One of the classical problems in complex analysis is to reconstruct an entire function from a countable set of data.  This article considers reconstruction of an entire function  $F$ from its values and the values of its derivatives up to a specified order at a discrete set of points on the real line. The entire function $F$ will be an element of a reproducing kernel Hilbert space  in the sense of de Branges \cite{dB} (we review the pertinent facts of de Branges spaces in Section \ref{dB-spaces}). We are interested in reconstruction formulas that converge to $F$ in the norm of the space. 
\smallskip

To motivate the results, we briefly describe the situation in the classical Paley-Wiener space. Given  $\tau>0$ the Paley-Wiener space $PW(\tau)$ is defined as the space of entire functions of exponential type at most $\tau$ such that their restriction to the real axis belongs to $L^2(\R)$. These are special spaces with a reproducing kernel structure. The reproducing kernel of $PW(\tau)$ is given by
$$
K(w,z)= \frac{\sin \tau (z-\ov w)}{\pi(z-\ov w)}.
$$
and
$$
F(w) = \int_\R F(x)\ov{K(w,x)}\dx,
$$
for every $F\in PW(\tau)$.
\smallskip

A basic result of the theory of Paley-Wiener spaces is that for all $F\in PW(\tau)$ we have
\es{\label{PW-rec-form}
F(z) = \frac{\sin (\tau z)}{\pi}\sum_{n\in\Z} \frac{(-1)^nF(\pi n/\tau)}{z-\pi n/\tau},
}
where the sum converges in $L^2(\R)$ as well as uniformly on compact sets of $\C$ (this is sometimes called the Shannon-Whittaker interpolation formula). The existence of interpolation formulas using derivatives is also known in Paley-Wiener spaces. In \cite[Theorem 9]{V}, J. Vaaler proved that
\es{\label{PW-rec-form-der}
F(z) = \bigg(\frac{\sin \tau x}{\tau}\bigg)^2\sum_{n\in\Z} \left(\frac{F(\pi n/\tau)}{(z-\pi n/\tau)^2} +\frac{F'(\pi n/\tau)}{z-\pi n/\tau}\right),
}
for every $F\in PW(2\tau)$.

\smallskip

The Paley-Wiener space is a particular case of a wider class of spaces of entire functions called de Branges spaces  (see \cite[Chapter 2]{dB} for the case $p=2$ and \cite{Bar,Bar2} for $p\neq 2$). A generalization of Vaaler's result in de Branges spaces was obtained by the first named author in \cite{Gon} which we describe next. For a given Hermite-Biehler function $E$, a de Branges space $\H(E^2)$ can be constructed where the intended interpolation formulas will take place. With $E^*(z) = \overline{E(\bar{z})}$ we define $A(z) = 2^{-1}(E(z) +E^*(z))$ and $B(z) = (i/2) (E(z) - E^*(z))$. Furthermore, $\varphi$ denotes the phase of $E$, that is, $E(x)e^{i\varphi(x)} \in \R$ for all real $x$. We denote by $\mc{T}_B$ the set of (real) zeros of $B$. 

 Throughout this paper we use for $f,g$ in a set $\mc{F}$ the notation $f\simeq g$ to mean that there exist positive constants $C,D$ with $C f\le g \le D f$, and the constants are uniform in $\mc{F}$. Similarly, $f\lesssim g$ stands for the statement $f \le C g$ with uniform $C>0$. 

\medskip

\noindent{\bf Theorem A} (\cite[Theorem 1]{Gon}). {\it Let $E$ be a Hermite-Biehler function such that $\mc{H}(E^2)$ is a de Branges space closed under differentiation, $AB\notin \mc{H}(E^2)$ and $\varphi'(t)\ge \delta>0$ for some $\delta>0$ and all $t\in \mc{T}_B$. Then
\[
F(z) = B(z)^2 \sum_{t\in \mc{T}_B} \left(\frac{F(t)}{B'(t)^2(z-t)^2} + \frac{F'(t) B'(t) - F(t) B''(t)}{B'(t)^3(z-t)} \right),
\]
and the series converges in $\mc{H}(E^2)$ as well as uniformly on compact subsets of $\C$. 
}
\medskip

This result immediately leads to questions regarding necessity and sufficiency of the assumptions under which the conclusions of the theorem hold. Our main results provide answers to most of these questions.

\begin{enumerate}

\item The requirement that $\mc{H}(E^2)$ is closed under the differentiation operator is a natural condition, since we use derivatives for reconstruction, but it is usually hard to check directly. A sufficient criterion was given by A.\ Baranov \cite[Theorem 3.2]{Bar} who showed that if $E'/E\in H^\infty(\C^+)$ (see Section \ref{Lp-case}), then differentiation defines a bounded operator on $\mc{H}(E)$. He also showed that the reverse implication is not true (see \cite[Section 4.2]{Bar}). Our first result connects here, we prove in Theorem \ref{Thm1} that $E'/E\in H^\infty(\C^+)$ is actually equivalent to $\mc{H}(E^2)$ being closed under differentiation.

\item The uniform lower bound for  $\varphi'$ is necessary. We show in Theorem \ref{Thm2} that  a relation of the form
\begin{align}\label{frame-intro}
  \sum_{t\in \mc{T}_B } \frac{|F(t)|^2+|F'(t)|^2}{K(t,t)} \simeq \|F\|_{\mc{H}(E^2)}^2
\end{align}
holds for $F\in \mc{H}(E^2)$ (with constants independent of $F$), and we prove in Theorem \ref{Thm3} that if the series in \eqref{frame-intro} bounds the norm, then it follows already that $\varphi'\ge \delta$ on $\mc{T}_B$ for some positive $\delta$.  This is essentially due to the fact that the norm of $B(z)^\nu/(z-t)$  in $\mc{H}(E^\nu)$ and the value of its $(\nu-1)$st derivative at $z=t$ can be estimated by distinct powers of $\varphi'(t)$ if $\nu\ge 2$. We remark further that boundedness of the differentiation operator on $\mc{H}(E^\nu)$ for $\nu\ge 2$ implies that $\varphi'$ is bounded from above on $\R$. We provide a proof in Proposition \ref{upper-phase-bound} (it is worth emphasizing that none of these implications are valid for $\nu=1$).

\item Reconstruction formulas for all derivatives hold. Theorem A suggests that reconstruction from $\{F^{(j)}(t): t\in \mc{T}_B, 0\le j\le \nu-1\}$ requires us to work in $\mc{H}(E^\nu)$ which is the setting for this article (a fact already suggested by Vaaler's reconstruction \eqref{PW-rec-form-der}, where in order to maintain the same interpolation nodes the type was doubled). We give the precise formulation in Theorem \ref{Thm2}. 

\item The condition $AB\notin \mc{H}(E)$ does not need to be assumed, it will follow if differentiation is a bounded operator on $\mc{H}(E^\nu)$ for $\nu\ge 2$. This is shown in Corollary \ref{ABcorollary}.

\item An interesting and mainly still open question concerns the existence of corresponding formulas in $\mc{H}^p(E^\nu)$ where $1<p<\infty$ (the $L^p$ version of de Branges spaces). We include a few results in this direction in Section \ref{Lp-case}.

\end{enumerate}

For reconstruction with derivatives one needs to work in $\mc{H}(E^\nu)$ rather than in $\mc{H}(E)$. This is something of a drawback, in practice  it would be preferable to have interpolation formulas that converge in the norm of $L^2(\R,\mu)$ ($\mu$ some non-negative measure). On the other hand, the structure of the interpolation series leads naturally to convergence in $\mc{H}(E^\nu)$. This leads to the question when there exists $E$ such that the restriction of the norm of $L^2(\R,\mu)$ to functions of exponential type $\tau$ and the norm on $\mc{H}(E^\nu)$ are equivalent. This is largely unknown, except in special cases. For example, \cite{Gon} includes calculations for homogeneous spaces and we extend those results in Corollary \ref{hom-rec}. 

We mention that formulas like \eqref{PW-rec-form} and \eqref{PW-rec-form-der} have been used to construct extremal functions related to Beurling-Selberg extremal problems. These are functions $F$ of prescribed exponential type that minimize the $L^1(\R,\mu)$-distance ($\mu$ is some non-negative measure) from a given function $g$ and such that $F$ lies below or above $g$ on $\R$. These functions have the special property that they interpolate the target function $g$ (an its derivative) at a certain sequence of real points and have several special properties that are very useful in applications to analytic number theory, being the key to provide sharp (or improved) estimates. For instance, in connection to: large sieve inequalities \cite{HV, V}, Erd\"{o}s-Tur\'{a}n inequalities \cite{CV2,V}, Hilbert-type inequalities \cite{CL3, CLV, CV2, GV, V}, Tauberian theorems \cite{GV} and bounds in the theory of the Riemann zeta-function and general $L$-functions \cite{CCLM,CCM,CCM2,CF,CS,Ga,GG}. Further constructions and applications can also be found in \cite{CC,CG,CL4,GMK,LSp}.


\section{Results}

\subsection{De Branges spaces}\label{dB-spaces}
Since our results are formulated in the language of de Branges spaces, we briefly review the basics (see \cite[Chapter 2]{dB}).  A function $F$ analytic in the open upper half-plane 
$$\C^+ = \{z \in \C:\ \im z >0\}$$ 
has { bounded type} if it can be written as the quotient of two functions that are analytic and bounded in $\C^+$. If $F$ has bounded type in $\C^+$ then, according to \cite[Theorems 9 and 10]{dB}, we have
\es{\label{mean-type}
\limsup_{y \to \infty} \, y^{-1}\log|F(iy)| = v(F) <\infty.
}
The number $v(F)$ is called the { mean type} of $F$. We say that an entire function $F:\C \to \C$, not identically zero, has { exponential type} if 
\begin{equation*}
\limsup_{|z|\to \infty} |z|^{-1}\log|F(z)| = \tau(F) <\infty.
\end{equation*}
In this case, the non-negative number $\tau(F)$ is called the { exponential type of $F$}. If $F:\C \to \C$ is entire we define $F^*:\C \to \C$ by $F^*(z) = \overline{F(\overline{z})}$ and if $F(z)=F^*(z)$ we say that it is { real entire}.

A { Hermite-Biehler function} $E:\C \to \C$ is an entire function that satisfies the fundamental inequality
\begin{align*}
|E^*(z)|<|E(z)|,
\end{align*}
for all $z\in\C^+$. We define the {de Branges space} $\H(E)$ to be the space of entire functions $F:\C \to \C$ such that
\begin{equation*}
\|F\|_E^2 := \int_{-\infty}^\infty |F(x)|^{2} \, |E(x)|^{-2} \, \dx <\infty\,,
\end{equation*}
and such that $F/E$ and $F^*/E$ have bounded type and non-positive mean type in $\C^+$. This forms a Hilbert space with respect to the inner product 
\begin{equation*}
\langle F, G \rangle_E := \int_{-\infty}^\infty F(x) \,\overline{G(x)} \, |E(x)|^{-2} \, \dx.
\end{equation*}
The Hilbert space $\H(E)$ has the special property that, for each $w \in \C$, the map $F \mapsto F(w)$ is a continuous linear functional on $\H(E)$. Therefore, there exists a function $z \mapsto K(w,z)$ in $\H(E)$ such that 
\begin{equation}\label{Intro_rep_pro}
F(w) = \langle F, K(w,\cdot) \rangle_E\,.
\end{equation} 
The function $K(w,z)$ is called the { reproducing kernel} of $\H(E)$. If we write
\begin{equation*}
A(z) := \frac12 \big\{E(z) + E^*(z)\big\} \ \ \ {\rm and}  \ \ \ B(z) := \frac{i}{2}\big\{E(z) - E^*(z)\big\},
\end{equation*}
then $A$ and $B$ are real entire functions with only real zeros and $E(z) = A(z) -iB(z)$. The reproducing kernel is then given by \cite[Theorem 19]{dB}
\begin{equation*}
\pi (z - \ov{w})K(w,z) = B(z)A(\ov{w}) - A(z)B(\ov{w}),
\end{equation*}
or alternatively by
\begin{equation}\label{Intro_Def_K_E}
2\pi i (\ov{w}-z)K(w,z) = E(z)E^*(\ov{w}) - E^*(z)E(\ov{w}). 
\end{equation}
When $w = \ov{z}$ we have
\begin{equation}\label{Intro_Def_K_AB}
\pi K(\ov{z}, z) = B'(z)A(z) - A'(z)B(z).
\end{equation}
We may apply the Cauchy-Schwarz inequality in \eqref{Intro_rep_pro} to obtain that
\begin{align}\label{CS-ineq}
|F(w)|^2 \leq \|F\|^2_EK(w,w),
\end{align}
for every $F\in\H(E)$. Also, it is not hard to show  that $K(w,w)=0$ if and only if $w \in\R$ and $E(w) = 0$ (see \cite[Lemma 11]{HV}).

We denote by $\varphi$ a phase function associated to $E$. This is an analytic function in  a neighborhood of $\R$ defined by the condition $e^{i\varphi(x)} E(x) \in\R$ for all real $x$ ($\p$ is uniquely defined modulo the sum of a constant of the form $\pi k$ for $k\in\Z$). We obtain that
\es{\label{phi-id}
\varphi'(x)=\re \bigg\{i\frac{E'(x)}{E(x)}\bigg\}=\pi\frac{K(x,x)}{|E(x)|^2} > 0,
}
for all real $x$ and thus $\p(x)$ is an increasing function of real $x$. We also have that
$$
e^{2i\p(x)} = \frac{A(x)^2}{|E(x)|^2}  - \frac{B(x)^2}{|E(x)|^2} + 2i\frac{A(x)B(x)}{|E(x)|^2},
$$
for all real $x$. As a consequence, the points $t\in\R$ such that $\p(t)\equiv 0 \mod \pi$ coincide with the real zeros of $B/E$ and the points $s\in\R$ such that $\p(s)\equiv \pi/2\mod \pi$ coincide with the real zeros of $A/E$ and by \eqref{phi-id}, these zeros are simple. In particular, the function $B/A$ has only simple real zeros and simple real poles that interlace.

Denote by $\mc{T}_B$ the set of real zeros of the function $B$. This set plays a special role in the theory of de Branges associated with a function $E$ with no real zeros. In this case, by \eqref{Intro_Def_K_AB} and the reproducing kernel property, we easily see that the functions $\{B(z)/(z-t)\}_{t\in\Tau_B}$ form an orthogonal set in $\H(E)$ and, by \cite[Theorem 22]{dB} form a basis if and only if $B\notin \H(E)$. In that case the identities
\es{\label{norm-HE}
\|F\|^2_{\H(E)} = \sum_{t\in\mc{T}_B} \frac{|F(t)|^2}{K(t,t)}
}
and 
$$
F(z) = B(z)\sum_{t\in\Tau_B}\frac{F(t)}{B'(t)(z-t)}
$$
hold for all $F\in \H(E)$.  

Finally, we say that $\H(E)$ is closed under differentiation if $F' \in \H(E)$ whenever $F\in\H(E)$. Inequality \eqref{CS-ineq} together with the fact that $w\in\C\mapsto K(w,w)$ is a continuous functions implies that convergence in the norm of $\H(E)$ implies uniform convergence in compact sets of $\C$. As a consequence, differentiation defines a closed linear operator on $\H(E)$ and therefore by the Closed Graph Theorem defines a bounded linear operator on $\H(E)$. 

\subsection{Main Results} 

We prove in Section  \ref{Thm1-proof-section}  the following statement about the differentiation operator on $\mc{H}(E^\nu)$ for $\nu \ge 2$. 

\begin{theorem}\label{Thm1} The following statements are equivalent:
\begin{enumerate}
\item $E'/E\in H^\infty(\C^+)$;
\item $\mc{H}(E^\nu)$ is closed under differentiation for some integer $\nu \ge 2$;
\item $\mc{H}(E^\nu)$ is closed under differentiation for all integers $\nu \ge 1$. 
\end{enumerate}
\end{theorem}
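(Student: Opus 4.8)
The plan is to establish the cycle $(1)\Rightarrow(3)\Rightarrow(2)\Rightarrow(1)$, the last implication carrying essentially all the weight. For $(1)\Rightarrow(3)$ I would first observe that $E^\nu$ is again a Hermite--Biehler function, since $|(E^\nu)^*(z)|=|E^*(z)|^\nu<|E(z)|^\nu=|E^\nu(z)|$ for $z\in\C^+$, and that its logarithmic derivative satisfies $(E^\nu)'/E^\nu=\nu\,E'/E$. Hence $E'/E\in H^\infty(\C^+)$ forces $(E^\nu)'/E^\nu\in H^\infty(\C^+)$ for every $\nu\ge 1$, and Baranov's criterion \cite[Theorem 3.2]{Bar}, applied to the space $\mc H(E^\nu)$, shows that differentiation is bounded there; thus $\mc H(E^\nu)$ is closed under differentiation for all $\nu\ge 1$. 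The implication $(3)\Rightarrow(2)$ is trivial.

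For $(2)\Rightarrow(1)$, fix $\nu\ge 2$ with $\mc H(E^\nu)$ closed under differentiation. As recalled in Section~\ref{dB-spaces}, the Closed Graph Theorem makes the differentiation operator $D\colon F\mapsto F'$ bounded on $\mc H(E^\nu)$. Let $K_\nu$ be the reproducing kernel of $\mc H(E^\nu)$ and let $g_w\in\mc H(E^\nu)$ represent the functional $F\mapsto F'(w)$. From $F'(w)=\langle F',K_\nu(w,\cdot)\rangle=\langle F,g_w\rangle$ one reads off $g_w=D^{*}K_\nu(w,\cdot)$, so the necessary pointwise estimate $\|g_w\|^2\le \|D\|^2\,K_\nu(w,w)$ must hold for all $w\in\C^+$. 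Both sides are computable from \eqref{Intro_Def_K_E} applied to $E^\nu$, using the standard identity $\|g_w\|^2=\partial_z\partial_{\bar w}K_\nu(w,z)\big|_{z=w}$ (the $w$-derivative taken in the antiholomorphic variable). Writing $g=E'(w)/E(w)$, $h=(E^*)'(w)/E^*(w)$, $\rho=|E^*(w)/E(w)|\in[0,1)$ and $y=\im w>0$, a direct differentiation collapses the estimate into
\[
\nu^2\,\frac{|g|^2-\rho^{2\nu}|h|^2}{1-\rho^{2\nu}}
+\frac{\nu}{y}\,\frac{\im\!\big(g-\rho^{2\nu}h\big)}{1-\rho^{2\nu}}
+\frac{1}{2y^2}\ \le\ \|D\|^2 .
\]

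The remaining task is to read off from this single family of inequalities that $g=E'/E$ is bounded on $\C^+$. The auxiliary quantity $h$ is tied to $g$ through the contraction $\Theta:=E^*/E\colon\C^+\to\mathbb D$: since $\Theta'/\Theta=h-g$, the Schwarz--Pick inequality $|\Theta'(w)|\le(1-\rho^2)/(2y)$ yields $|h-g|\le(1-\rho^2)/(2\rho y)$. Inserting this bound into the displayed inequality and estimating crudely gives $\nu^2|g|^2\le\|D\|^2+C_\nu\big(|g|/y+y^{-2}\big)$ for all $w$, which already furnishes a uniform bound for $g$ on every half-plane $\{\im w\ge y_0>0\}$. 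Since $\rho(w)\to1$ precisely as $w\to\R$, the entire point is the behaviour as $y\to 0$.

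The boundary regime is where I expect the main difficulty, and where the hypothesis $\nu\ge 2$ is indispensable. As $y\to0$ one has $\rho\to1$, and the exact identity $\partial_y\log\rho=\im(g-h)$ gives the expansion $1-\rho^{2\nu}=2\nu\,\im(h-g)\,y+O(y^2)$, so that all three terms above are individually of size $y^{-2}$; the crude estimates, which discard signs, are therefore too lossy here. Substituting the expansion shows that the $\im$-term cancels the explicit $\tfrac12 y^{-2}$ to leading order, and the genuine content of the inequality emerges only at the next orders in $y$, where the term controlling $|g|^2$ survives with a coefficient whose favorable sign relies on $\nu\ge 2$ and degenerates at $\nu=1$. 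This matches the phenomenon noted in the introduction, that for $\nu\ge2$ the norm and the derivative data scale like distinct powers of $\varphi'$, and it is consistent with Baranov's example showing the statement fails at $\nu=1$. Carrying out this expansion carefully, and using the Schwarz--Pick bound to absorb $h$ throughout, should yield $\sup_{\C^+}|E'/E|<\infty$, completing $(2)\Rightarrow(1)$.
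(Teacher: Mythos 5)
Your implications $(1)\Rightarrow(3)\Rightarrow(2)$ are exactly the paper's argument (Baranov's criterion applied to $E^\nu$ via $(E^\nu)'/E^\nu=\nu E'/E$), and your setup for $(2)\Rightarrow(1)$ is sound as far as it goes: the inequality $\|g_w\|^2\le\|D\|^2K_\nu(w,w)$ is a valid necessary condition, and your displayed formula is in fact an exact identity --- writing $R=\rho^\nu$, a direct computation from \eqref{Intro_Def_K_E} gives
\begin{equation*}
\frac{\partial_z\partial_{\bar w}K_\nu(w,z)\big|_{z=w}}{K_\nu(w,w)}
=\nu^2\,\frac{|g|^2-R^{2}|h|^2}{1-R^{2}}+\frac{\nu}{y}\,\frac{\im\big(g-R^{2}h\big)}{1-R^{2}}+\frac{1}{2y^2},
\end{equation*}
which I checked is consistent with the Paley--Wiener case (the limit $\tau^2/3$ on the real axis). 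The problem is the step you yourself flag as the ``main difficulty'': it is not carried out, and when one does carry it out it does not close. Multiplying by $y^2(1-\rho^{2\nu})$ and expanding at a fixed real $x$ with $g(x)=u+iv$, $v=-\varphi'(x)$, the orders $y$ and $y^2$ cancel exactly as you predict, but the first surviving order yields (after dividing by $4\nu y^3$)
\begin{equation*}
\nu^2\varphi'\,|g|^2\;\le\;\varphi'\,\|D\|^2+\tfrac{2}{3}\nu^2(\varphi')^3+\nu\,u\,v_x+\tfrac{1}{6}\,v_{xx},
\end{equation*}
where $v_x=-\varphi''$ and $v_{xx}=-\varphi'''$. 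The coefficient of $|g|^2$ is $\nu^2\varphi'>0$ for \emph{every} $\nu\ge1$ --- no sign degeneration at $\nu=1$ appears --- and the right-hand side contains the uncontrolled terms $u\varphi''$ and $\varphi'''$, so no pointwise bound on $|g|$ follows. This is not a cosmetic defect: Baranov's $\nu=1$ counterexample satisfies the same pointwise necessary condition (with $\nu=1$) while $E'/E\notin H^\infty(\C^+)$, so any proof using only the family of inequalities $\|g_w\|^2\le\|D\|^2K_\nu(w,w)$ must exploit $\nu\ge2$ in a structural way that your expansion does not exhibit; as sketched, the mechanism you anticipate simply is not there.

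For comparison, the paper's proof of $(2)\Rightarrow(1)$ uses boundedness of $D$ against genuine test functions rather than only the kernel-diagonal inequality: taking $F(z)=E(z)^{\nu-2}B(z)K(t,z)$ (this is where $\nu\ge2$ enters irreplaceably, through the factor $E^{\nu-2}$) gives $\varphi'(t)\le\mc{D}\sqrt{\nu}$ at zeros of $B$, and the rotation $E_\theta=e^{i\theta}E$ upgrades this to all of $\R$ (Proposition \ref{upper-phase-bound}). The Hadamard factorization then gives $\varphi'(x_n)\ge y_n^{-1}$ at zeros $x_n-iy_n$ of $E$, so the zeros are separated from $\R$ by $1/(\mc{D}\sqrt{\nu})$; combining this with exponential type (from \cite[Proposition 1.2]{Bar2}) one invokes \cite[Theorem A]{Bar2} to conclude $E'/E\in H^\infty(\C^+)$. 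That last passage --- from zeros uniformly separated from the real axis to boundedness of $E'/E$ --- is a substantive theorem of Baranov, and your plan would need either to import it (at which point the asymptotic expansion becomes unnecessary once you have a bound on $\varphi'$) or to replace it by the boundary analysis, which, by the computation above, fails as proposed.
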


The next definitions set up the interpolation formula in $\mc{H}(E^\nu)$. Let $\nu\geq 2$ be an integer and $E$ be a Hermite-Biehler function with no real zeros (hence the zeros of $B$ are simple). Denote by $A_\nu$ and $B_\nu$ the real entire functions that satisfy $E^\nu = A_\nu - iB_\nu$ and by $K_\nu(w,z)$ the reproducing kernel associated with $\H(E^\nu)$. We define the collection $\mc{B}_\nu$ of functions $z\mapsto B_{\nu,j}(z,t)$ given by
\begin{align}\label{intro-Bdef}
B_{\nu,j}(z,t) = \frac{B(z)^\nu}{(z-t)^j}
\end{align}
where $t\in \mc{T}_B$ and $1\le j\le \nu$. For $\ell\geq 0$ we denote by $P_{\nu,\ell}(z,t)$ the Taylor polynomial of degree $\ell$ of $B_{\nu,\nu}(z,t)^{-1}$  expanded into a power series at $z=t$ as a function of $z$, that is,
\es{\label{1/B-P-def}
\frac{1}{B_{\nu,\nu}(z,t)} = \underbrace{\sum_{n =0}^\ell a_{\nu,n}(t)(z-t)^n}_{P_{\nu,\ell}(z,t)} + \sum_{n >\ell} a_{\nu,n}(t)(z-t)^n .
}
Finally, we denote by $\mc{G}_\nu$ the collection of functions $z\mapsto G_{\nu,j}(z,t)$ defined by
\begin{align}\label{intro-Gdef}
G_{\nu,j}(z,t) = B_{\nu,\nu-j}(z,t)  \frac{P_{\nu,\nu-j-1}(z,t)}{j!},
\end{align}
for $j=0,...,\nu-1$ and $t\in\Tau_B$. We note that
$$
G_{\nu,j}(z,t)=\frac{(z-t)^j}{j!} -\frac{B(z)^\nu}{j!} \sum_{n\geq \nu-j} a_{\nu,n}(t)(z-t)^{n+j-\nu}.
$$
We easily see that these functions satisfy the following property
\es{\label{G-delta-property}
G^{(\ell)}_{\nu,j}(s,t) = \delta_0(s-t)\delta_0(\ell-j)
}
for $\ell,j=0,...,\nu-1$ and $s,t\in\Tau_B$ ($G^{(\ell)}_{\nu,j}(s,t)=\partial_z^\ell G_{\nu,j}(z,t)|_{z=s}$). We prove in Section \ref{derivative-frames} the following statement.

\begin{theorem}\label{Thm2} 
Let $E$ be a Hermite-Biehler function such that $E'/E\in H^\infty(\C^+)$. Assume that   the phase function $\p$ associated with $E$ satisfies $1\lesssim \p'(t)$ for $t\in\Tau_B$. Then for $\nu\geq 2$ the following statements hold:
\begin{enumerate}
\item For every $F\in \H(E^\nu)$
\begin{align}\label{intro-Gdual}
F(z) = \sum_{t\in \mc{T}_B} \sum_{j=0}^{\nu-1} F^{(j)}(t) G_{\nu,j}(z,t),
\end{align}
where the series converges to $F$ in the norm of $\H(E^\nu)$ $($hence, also uniformly in compact subsets of $\C)$;

\item For $F\in \mc{H}(E^\nu)$ 
\begin{align}
 \sum_{t\in\mc{T}_B}  \sum_{j=0}^{\nu-1} \frac{ |F^{(j)}(t)|^2}{K_\nu(t,t)}  \simeq \|F\|_{\mc{H}(E^\nu)}^2\label{intro-Dframe}
\end{align}
and
\begin{align}
  \sum_{t\in\mc{T}_B}  \sum_{j=0}^{\nu-1}  K_\nu(t,t) |\langle F, G_{\nu,j}(\cdot,t)\rangle_{E^\nu} |^2   \simeq \|F\|_{\mc{H}(E^\nu)}^2 \label{intro-Gframe},
\end{align}
where the implied constants are independent of $F$;

\item If any of the terms of the series in \eqref{intro-Dframe} $($respect. \eqref{intro-Gframe}$)$ is removed, the modified formula fails to hold .
\end{enumerate}
\end{theorem}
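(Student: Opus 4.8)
The plan is to read all three parts as a single statement about one system of reproducing functions. For $t\in\mc{T}_B$ and $0\le j\le\nu-1$ let $k_{t,j}\in\H(E^\nu)$ represent the functional $F\mapsto F^{(j)}(t)$, so that $F^{(j)}(t)=\langle F,k_{t,j}\rangle_{E^\nu}$; these exist because differentiation is bounded on $\H(E^\nu)$ by Theorem \ref{Thm1}, hence derivative point-evaluation is continuous. Set $e_{t,j}=K_\nu(t,t)^{-1/2}k_{t,j}$ and $f_{t,j}=K_\nu(t,t)^{1/2}G_{\nu,j}(\cdot,t)$. Then \eqref{G-delta-property} reads exactly $\langle f_{s,i},e_{t,j}\rangle_{E^\nu}=\delta_0(s-t)\delta_0(i-j)$, so $\{e_{t,j}\}$ and $\{f_{t,j}\}$ are biorthogonal; moreover \eqref{intro-Dframe} is $\sum|\langle F,e_{t,j}\rangle_{E^\nu}|^2\simeq\|F\|^2$, \eqref{intro-Gframe} is $\sum|\langle F,f_{t,j}\rangle_{E^\nu}|^2\simeq\|F\|^2$, and \eqref{intro-Gdual} is the expansion $F=\sum\langle F,e_{t,j}\rangle_{E^\nu}\,f_{t,j}$. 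I will deduce everything from the single assertion that $\{e_{t,j}\}$ is a Riesz basis of $\H(E^\nu)$: a frame that admits a biorthogonal sequence is minimal, hence a Riesz basis; its canonical dual is then the biorthogonal family $\{f_{t,j}\}$ (itself a Riesz basis, giving \eqref{intro-Gframe}); and an exact frame loses completeness, hence its lower bound, when any vector is removed, which is part (3).

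First I would record the two easy inputs. The upper bound in \eqref{intro-Dframe} is a Bessel estimate: the reproducing kernels of $\H(E^\nu)$ at the zeros of $B_\nu$, normalized, are orthonormal (they are orthogonal exactly as in \eqref{norm-HE} applied to $E^\nu$, since $K_\nu(\tau,\tau')=0$ when $B_\nu(\tau)=B_\nu(\tau')=0$), so point sampling on the subset $\mc{T}_B$ is Bessel with constant at most one; applying this to $F^{(j)}\in\H(E^\nu)$ and using $\|F^{(j)}\|_{E^\nu}\lesssim\|F\|_{E^\nu}$ from Theorem \ref{Thm1}, then summing over $0\le j<\nu$, gives $\sum_{t,j}|F^{(j)}(t)|^2/K_\nu(t,t)\lesssim\|F\|^2$. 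The second input is completeness: if $F\in\H(E^\nu)$ satisfies $F^{(j)}(t)=0$ for all $t\in\mc{T}_B$ and $0\le j<\nu$, then $F$ vanishes to order $\nu$ at each simple zero of $B$, so $F=B^\nu g$ with $g$ entire. Since $B^\nu/E^\nu=(-2i)^{-\nu}(1-E^*/E)^\nu$ is bounded and of zero mean type in $\C^+$, both $g$ and $g^*$ are of bounded type and nonpositive mean type, hence $g$ has exponential type zero; a Phragmén–Lindelöf argument together with $\varphi'\lesssim1$ on $\R$ (Proposition \ref{upper-phase-bound}) forces $g$ to be of polynomial growth, hence a polynomial; finally $\int_\R|g|^2|\sin\varphi|^{2\nu}\,\dx=\|F\|_{E^\nu}^2<\infty$, and since each interval between consecutive nodes contributes $\gtrsim1$ to $\int|\sin\varphi|^{2\nu}\,\dx$ (substitute $u=\varphi$ and use $\varphi'\lesssim1$), the integral diverges for any nonzero polynomial, so $g\equiv0$ and $F\equiv0$.

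The analytic heart is the Bessel bound for the dual system, i.e.\ the upper bound in \eqref{intro-Gframe}: $\sum_{t,j}K_\nu(t,t)\,|\langle F,G_{\nu,j}(\cdot,t)\rangle_{E^\nu}|^2\lesssim\|F\|^2$. Here I would use the expansion $G_{\nu,j}(\cdot,t)=\tfrac1{j!}\sum_{n=0}^{\nu-j-1}a_{\nu,n}(t)\,B_{\nu,\nu-j-n}(\cdot,t)$ together with uniform estimates for the Taylor coefficients $a_{\nu,n}(t)$ and for the norms $\|B_{\nu,i}(\cdot,t)\|_{E^\nu}$ in terms of $K_\nu(t,t)$ and powers of $\varphi'(t)$. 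This is precisely where the hypothesis $\varphi'(t)\gtrsim1$ on $\mc{T}_B$ is used — combined with $\varphi'\lesssim1$ it gives $\varphi'(t)\simeq1$ — to obtain $K_\nu(t,t)\,\|G_{\nu,j}(\cdot,t)\|_{E^\nu}^2\lesssim1$ uniformly; off-diagonal decay of $\langle B_{\nu,i}(\cdot,t),B_{\nu,i'}(\cdot,t')\rangle_{E^\nu}$ for $t\neq t'$ (the factors $(z-t)^{-i}$ localize near $t$) then yields a Schur-test/almost-orthogonality bound for the full sum. I expect this uniform control of the confluent norms and coefficients by powers of $\varphi'$ to be the main obstacle, exactly as flagged in the introduction.

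With these ingredients I would assemble the three parts. The series $S_F:=\sum_{t,j}F^{(j)}(t)\,G_{\nu,j}(\cdot,t)=\sum_{t,j}\langle F,e_{t,j}\rangle_{E^\nu}\,f_{t,j}$ converges in $\H(E^\nu)$ because $\{\langle F,e_{t,j}\rangle_{E^\nu}\}\in\ell^2$ (upper bound of \eqref{intro-Dframe}) while $\{f_{t,j}\}$ is Bessel; by \eqref{G-delta-property} and the fact that norm convergence implies uniform convergence on compacta of all derivatives, $S_F$ matches $F$ to order $\nu-1$ at every $t\in\mc{T}_B$, so $F-S_F\equiv0$ by completeness — this is part (1). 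The lower bound of \eqref{intro-Dframe} then follows from reconstruction and the $G$-Bessel bound by Cauchy–Schwarz: $\|F\|^2=\sum_{t,j}F^{(j)}(t)\,\langle G_{\nu,j}(\cdot,t),F\rangle_{E^\nu}\le\big(\sum|F^{(j)}(t)|^2/K_\nu(t,t)\big)^{1/2}\big(\sum K_\nu(t,t)|\langle F,G_{\nu,j}(\cdot,t)\rangle_{E^\nu}|^2\big)^{1/2}\lesssim\big(\sum|F^{(j)}(t)|^2/K_\nu(t,t)\big)^{1/2}\|F\|$. This establishes \eqref{intro-Dframe}, so $\{e_{t,j}\}$ is a frame; being biorthogonal to $\{f_{t,j}\}$ it is a Riesz basis, its canonical dual $\{f_{t,j}\}$ is a Riesz basis too (yielding \eqref{intro-Gframe}), and deletion of any single $e_{t_0,j_0}$ (respectively $f_{t_0,j_0}$) destroys completeness because $f_{t_0,j_0}$ (respectively $e_{t_0,j_0}$) is a nonzero vector orthogonal to all surviving elements, which witnesses the failure of the lower bound — this is part (3).
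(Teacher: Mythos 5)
Your overall architecture is sound and in fact mirrors the paper's: the upper bound in \eqref{intro-Dframe} via orthogonality of the reproducing kernels at the zeros of $B_\nu$ plus boundedness of differentiation is exactly the paper's estimate \eqref{ineq-14}; your biorthogonality/exact-frame reading of parts (2) and (3) matches the paper's frame-operator and canonical-dual argument; and your removal argument for part (3) is the paper's. But the two load-bearing analytic steps are not actually proved. The ``analytic heart'' --- the Bessel bound $\sum_{t,j} K_\nu(t,t)\,|\langle F, G_{\nu,j}(\cdot,t)\rangle_{E^\nu}|^2 \lesssim \|F\|^2$, equivalently the synthesis bound of Lemma \ref{frame-G} --- is left at the level of ``I would use \dots\ I expect \dots''. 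Worse, the route you sketch (off-diagonal decay from localization plus a Schur test) breaks precisely at the $j=1$ block: the naive size of $\langle B_{\nu,1}(\cdot,s), B_{\nu,1}(\cdot,t)\rangle_{E^\nu}$ is only of order $|s-t|^{-1}$, whose row sums over a separated sequence diverge logarithmically, so no Schur/almost-orthogonality argument closes there. The paper needs two nontrivial inputs at this point: the \emph{exact} orthogonality $\langle B_{\nu,1}(\cdot,s), B_{\nu,1}(\cdot,t)\rangle_{E^\nu} = 0$ for distinct $s,t\in\Tau_B$, proved by a contour-integration argument (Lemma \ref{zero-inner-prod}), and the sharp Hilbert-type inequality \eqref{Hilbert-inequality} for the $(t-s)^{-2}$ interactions with $2\le j\le\nu$, combined with the coefficient bounds $|a_{\nu,j}(t)|^2 \lesssim K_\nu(t,t)^{-1}$ of Lemma \ref{est-inner-prod-norm-B} --- which is exactly where the hypotheses $\varphi'(t)\ge\delta$ on $\Tau_B$ and the norm $\D$ of the differentiation operator enter.

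The second genuine gap is in your completeness argument, at the step ``Phragm\'en--Lindel\"of together with $\varphi'\lesssim 1$ forces $g$ to be of polynomial growth, hence a polynomial.'' From $F = B^\nu g \in \H(E^\nu)$ the available real-axis bound is $|g(x)| \lesssim |E(x)/B(x)|^\nu = |\sin\varphi(x)|^{-\nu}$, which degenerates at $\Tau_B$ and controls $g$ only on a separated sequence (say, the zeros of $A$); an entire function of exponential type zero that is bounded on a separated real sequence need not have polynomial growth, and the hypotheses give no upper bound on the gaps between consecutive nodes (since $\varphi'\ge\delta$ is assumed only \emph{at} the points of $\Tau_B$, $\varphi'$ may dip in between), so Duffin--Schaeffer-type theorems do not apply. (Also, your claim that $g$ and $g^*$ have nonpositive mean type tacitly uses $v(B/E)=0$, i.e.\ that $1-E^*/E$ has no exponential decay along the imaginary axis; this is true but requires a Harnack-type argument you do not give.) The paper avoids this issue entirely: completeness is obtained as density of the span of $\G_\nu$ (Lemma \ref{BGdense}), proved by induction on $\nu$ using Baranov's decomposition $\H(E_aE_b) = E_a^*\H(E_b)\oplus E_b\H(E_a)$ together with de Branges' theorem that $\{B(z)/(z-t)\}_{t\in\Tau_B}$ is a basis when $B\notin\H(E)$, the latter supplied by Corollary \ref{ABcorollary}; the lower bound in \eqref{intro-Dframe} then follows from the reconstruction formula and Lemma \ref{frame-G} rather than from a separate injectivity argument. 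Until you supply proofs of these two steps, your proposal is a correct skeleton of the paper's proof, not a proof.
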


The necessity of the boundedness of the phase at points in $\mc{T}_B$ is the focus of the following statement.

\begin{theorem}\label{Thm3} Let $E$ be a Hermite-Biehler function such that $E'/E\in H^\infty(\C^+)$, and let $\nu\ge 2$. If
\[
\|F\|^2_{\mc{H}(E^\nu)}\lesssim  \sum_{t\in\mc{T}_B}  \sum_{j=0}^{\nu-1} \frac{ |F^{(j)}(t)|^2}{K_\nu(t,t)},
\]
for all $F\in \mc{H}(E^\nu)$, then $1\lesssim \p'(t)$ for $t\in \mc{T}_B$. 
\end{theorem}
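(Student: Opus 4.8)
The plan is to test the hypothesised lower bound against a single, carefully chosen element of $\mc{H}(E^\nu)$, exploiting that the two sides of the inequality scale like different powers of $\p'(t_0)$. Fix $t_0\in\mc{T}_B$ and take $F=B_{\nu,1}(\cdot,t_0)=B^\nu/(\,\cdot\,-t_0)$ from \eqref{intro-Bdef}. Since $B$ has a simple zero at every point of $\mc{T}_B$, this $F$ is entire (it vanishes to order $\nu-1$ at $t_0$), lies in $\mc{H}(E^\nu)$ for the same reasons the functions $B/(\,\cdot\,-t)$ lie in $\mc{H}(E)$, and its derivative data are almost all zero: $F$ vanishes to order $\nu$ at each $s\in\mc{T}_B\setminus\{t_0\}$ and to order exactly $\nu-1$ at $t_0$. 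Hence in the sum on the right of the displayed hypothesis (which is the right-hand side of \eqref{intro-Dframe}) only the term $(s,j)=(t_0,\nu-1)$ survives, with $F^{(\nu-1)}(t_0)=(\nu-1)!\,B'(t_0)^\nu$.

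Next I would evaluate that single surviving term. From \eqref{phi-id} and \eqref{Intro_Def_K_AB}, together with $B(t_0)=0$ and $E(t_0)=A(t_0)$, one gets $B'(t_0)=\p'(t_0)E(t_0)$, so $|F^{(\nu-1)}(t_0)|^2=((\nu-1)!)^2\,\p'(t_0)^{2\nu}|E(t_0)|^{2\nu}$. Since the phase of $E^\nu$ is $\nu\p$, applying \eqref{phi-id} to $E^\nu$ gives $K_\nu(t_0,t_0)=\tfrac{\nu}{\pi}\p'(t_0)|E(t_0)|^{2\nu}$. Therefore the right-hand side of the hypothesis, evaluated at this $F$, equals exactly $\tfrac{\pi((\nu-1)!)^2}{\nu}\,\p'(t_0)^{2\nu-1}$, so it is $\simeq\p'(t_0)^{2\nu-1}$.

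The crux is to bound the norm from below. Using $|B(x)|=|E(x)|\,|\sin\p(x)|$ on $\R$ yields the clean formula $\|F\|^2_{\mc{H}(E^\nu)}=\int_\R|\sin\p(x)|^{2\nu}(x-t_0)^{-2}\,\dx$, and I want $\|F\|^2_{\mc{H}(E^\nu)}\gtrsim\p'(t_0)$ (in fact $\simeq\p'(t_0)$) uniformly in $t_0$. The difficulty is that a crude estimate of this integral degrades when $\p'$ varies rapidly, so the lower bound genuinely needs regularity of the phase. This is where $E'/E\in H^\infty(\C^+)$ enters beyond the mere upper bound $\p'\le M$ of Proposition \ref{upper-phase-bound}: it forces $\p'$ to be comparable to $\p'(t_0)$ on an interval about $t_0$ of length $\simeq\p'(t_0)^{-1}$, equivalently consecutive nodes of $\mc{T}_B$ and of the zero set of $A$ are spaced $\simeq\p'(t_0)^{-1}$ near $t_0$. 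Granting this, the change of variables $\theta=\p(x)$ (so $x-t_0=\int_0^\theta\p'(\p^{-1}(\tau))^{-1}\,\d\tau$) together with the comparability $\p'\simeq\p'(t_0)$ on the relevant range, after restricting to $\theta\in[\pi/4,3\pi/4]$ where $\sin^{2\nu}\theta\gtrsim1$, bounds $(x-t_0)^2$ by $\simeq\theta^2\p'(t_0)^{-2}$ and reduces the integral to a constant multiple of $\p'(t_0)\int_{\pi/4}^{3\pi/4}\sin^{2\nu}\theta\,\theta^{-2}\,\d\theta\simeq\p'(t_0)$. (Equivalently, one may bound $\|F\|^2$ below by the single reproducing-kernel term $|F(s_0)|^2/K_\nu(s_0,s_0)=\pi\big(\nu\,\p'(s_0)(s_0-t_0)^2\big)^{-1}$ for $s_0$ an adjacent zero of $A$, using $|s_0-t_0|\simeq\p'(t_0)^{-1}$ and $\p'(s_0)\simeq\p'(t_0)$.) I expect this phase-regularity step to be the main obstacle, since without it the integral can be misestimated on the wrong side of $t_0$.

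Finally I would combine the two estimates. The hypothesis applied to $F$ gives $\|F\|^2_{\mc{H}(E^\nu)}\lesssim\p'(t_0)^{2\nu-1}$, while the norm bound gives $\p'(t_0)\lesssim\|F\|^2_{\mc{H}(E^\nu)}$; hence $\p'(t_0)\lesssim\p'(t_0)^{2\nu-1}$, that is $1\lesssim\p'(t_0)^{2\nu-2}$. Since $\nu\ge2$ the exponent $2\nu-2$ is strictly positive, so this forces $\p'(t_0)\gtrsim1$ with a constant independent of $t_0\in\mc{T}_B$, which is the claim. I note that the argument is robust in the range $\nu\ge2$: any lower bound of the form $\|F\|^2_{\mc{H}(E^\nu)}\gtrsim\p'(t_0)^{\beta}$ with $\beta<2\nu-1$ would suffice, which is why recovering only a nonsharp power of $\p'(t_0)$ in the third step is still enough to conclude.
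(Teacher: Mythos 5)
Your reduction coincides with the paper's: the same test function $F=B^\nu/(\cdot-t_0)$, the same observation that the only surviving datum is $F^{(\nu-1)}(t_0)=(\nu-1)!\,B'(t_0)^\nu$, the same evaluation of the right-hand side as $\tfrac{\pi((\nu-1)!)^2}{\nu}\p'(t_0)^{2\nu-1}$ via \eqref{K-nu-kernel-id}, and the same final power comparison. The gap is in your lower bound $\|F\|^2_{E^\nu}\gtrsim \p'(t_0)$. You justify it by asserting that $E'/E\in H^\infty(\C^+)$ forces $\p'$ to be comparable to $\p'(t_0)$ on an interval of length $\simeq \p'(t_0)^{-1}$ about $t_0$. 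That assertion is unsupported and is in fact not a consequence of the hypothesis, and it fails precisely in the only regime that matters, namely $\p'(t_0)$ small (the paper's example before the Applications subsection shows this regime occurs: $E'/E\in H^\infty(\C^+)$ with $\p'(x)\to 0$). What $E'/E\in H^\infty(\C^+)$ actually yields is $\p'\le M$ on $\R$ and zeros of $E$ at depth $y_n\ge h>0$ (Proposition \ref{upper-phase-bound}); from the Poisson representation $\p'(x)=\re a+\sum_n y_n\big((x-x_n)^2+y_n^2\big)^{-1}$ one gets $|P_n'|\le P_n/y_n\le P_n/h$ for each summand, hence only a Harnack-type bound $\big|\tfrac{d}{dx}\log(\p'(x)-\re a)\big|\le 1/h$, i.e.\ comparability of $\p'$ on intervals of \emph{fixed} length $O(h)$, not of length $\p'(t_0)^{-1}\gg h$. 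Over the long interval, $\p'$ may climb from $\p'(t_0)=\epsilon$ to constant order within distance $h\log(1/\epsilon)\ll 1/\epsilon$, consistently with every constraint the hypothesis imposes. Both of your concrete routes break on such profiles: the adjacent zero $s_0$ of $A$ need satisfy neither $|s_0-t_0|\simeq\p'(t_0)^{-1}$ nor $\p'(s_0)\simeq\p'(t_0)$ (let the phase creep at rate $\epsilon$ through a change of $\pi/4$ and then let $\p'$ spike to order $1$ just before $s_0$; then your single kernel term $\pi\big(\nu\,\p'(s_0)(s_0-t_0)^2\big)^{-1}$ has size $\simeq\epsilon^2\ll\epsilon$), and the bound $(x-t_0)^2\simeq\theta^2\p'(t_0)^{-2}$ in your change of variables fails for the same reason.

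The inequality you need is nevertheless true, and the paper proves it with no phase regularity at all: this is Lemma \ref{lemma-norm-mult-B}. Writing $B^\nu/(z-t_0)=B^{\nu-1}\cdot B(z)/(z-t_0)$, one restricts the sampling identity \eqref{norm-HE} for $\mc{H}(E^\nu)$ to the zeros of $A$ (which are among the sampling nodes, since $B_\nu/A$ or $A_\nu/A$ is entire), where $|B(s)|=|E(s)|$ and $K_\nu(s,s)=\nu|E(s)|^{2(\nu-1)}K(s,s)$; this gives $\|B^{\nu-1}G\|^2_{E^\nu}\ge\nu^{-1}\|G\|^2_E$, and $\|B(\cdot)/(\cdot-t_0)\|^2_E=\pi\p'(t_0)$ exactly. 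In effect the paper uses the full identity $\pi\sum_{A(s)=0}\big(\p'(s)(s-t_0)^2\big)^{-1}=\pi\p'(t_0)$, of which your fallback keeps only one term --- and one term is not enough. Note also that these Parseval identities require $A,B\notin\mc{H}(E)$, which is exactly where $E'/E\in H^\infty(\C^+)$ enters, through boundedness of differentiation (Theorem \ref{Thm1}) and Corollary \ref{ABcorollary}. If you replace your comparability claim by an appeal to Lemma \ref{lemma-norm-mult-B} together with Corollary \ref{ABcorollary}, the rest of your argument is correct and matches the paper's proof verbatim.
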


Finally, we remark that there exist Hermite-Biehler functions $E$ with $E'/E\in H^\infty(\C^+)$ and $\p'(x)\to 0$ as $|x|\to \infty$. For example, let $\ov w_n = x_n - iy_n$ with $y_n>0$ be such that
\[
\sum_{n} \left(\frac{1}{y_n} + \frac{|x_n|}{x_n^2 + y_n^2}\right)<\infty
\]
and define
\[
E(z) = \prod_{n} \left(1-\frac{z}{\ov w_n}\right) e^{z \re (1/w_n)}.
\]
The identity $e^{2i\varphi(x)} = E^*(x)/E(x)$, valid for $x\in\R$, convergence of $\sum_n y_n^{-1}$, and the representation
\[
\p'(x) = \sum_n \frac{y_n}{(x-x_n)^2+y_n^2}
 \]
  may be used to show that $\p'(x)\to 0$ as $|x|\to \infty$. Logarithmic differentiation of $E(z)$ for $\im z >0$ gives 
  \[
  \bigg|\frac{E'(z)}{E(z)}\bigg| = \bigg|\sum_{n} \frac{1}{z-\ov w_n} + \frac{x_n}{x_n^2+y_n^2}\bigg| \leq  \sum_n \frac{1}{y_n} + \frac{|x_n|}{x_n^2+y_n^2} < \infty,
  \]
 that is, $E'/E$ is uniformly bounded in  $\C^+$. This example shows that the condition $E'/E\in H^\infty(\C^+)$ alone is not strong enough to imply the statements of Theorem \ref{Thm2}.

\subsection{Applications}
We mention finally the following two applications. As was already pointed out in \cite{Gon}, Theorem \ref{Thm2} gives a sufficient condition for complete interpolating sequences with derivatives in Paley-Wiener space. We say that a sequence of real numbers $\{\lambda_m\}$ is {complete interpolating with derivatives up to order $\nu-1$} in the Paley-Wiener space $PW=PW(\tau)$, if the system
$$
F^{(n)}(\lambda_m) = f_{n,m}
$$
has a {unique} solution $F\in PW$ for all 
$$
(\{f_{0,m}\},\{f_{1,m}\},...,\{f_{\nu-1,m}\})\in \overbrace{\ell^2(\Z)\times...\times\ell^2(\Z)}^{\nu\,\,\text{times}}.
$$
We note that this condition is equivalent to
$$
\|F\|^2_{L^2} \simeq \sum_{n=0}^{\nu-1} \sum_{m} |F^{(n)}(\lambda_m)|^2,
$$
for all $F\in PW$, and the above norm equivalence does not hold if one node $\lambda_{m_0}$ is removed form the sequence $\{\lambda_m\}$.

\begin{corollary}
Assume that  $PW=\H(E^\nu)$ $($as sets$)$ and $\inf_{t\in\Tau_B}{\p'(t)}>0$. Then $\Tau_B$ is a complete interpolating sequence with derivatives up to order $\nu-1$ for $PW$.
\end{corollary}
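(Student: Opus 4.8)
The plan is to show that the data map
\[
\Lambda\colon F\longmapsto \big(F^{(j)}(t)\big)_{t\in\Tau_B,\ 0\le j\le \nu-1}
\]
is a bounded linear bijection from $PW$ onto $(\ell^2(\Z))^\nu$, since by the definition recalled above this is exactly the statement that $\Tau_B$ is complete interpolating with derivatives up to order $\nu-1$. First I would extract the consequences of the hypothesis $PW=\H(E^\nu)$. Both are reproducing kernel Hilbert spaces with continuous point evaluations, so convergence in either norm forces pointwise convergence; hence the identity map between them has closed graph and, by the Closed Graph Theorem, is a topological isomorphism, giving $\|F\|_{L^2}\simeq \|F\|_{\H(E^\nu)}$. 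Since $PW$ is closed under differentiation (Bernstein's inequality), so is $\H(E^\nu)$, and Theorem \ref{Thm1} yields $E'/E\in H^\infty(\C^+)$. Combined with the standing assumption $\inf_{t\in\Tau_B}\p'(t)>0$, i.e. $1\lesssim \p'(t)$ on $\Tau_B$, this places us squarely in the setting of Theorem \ref{Thm2}.

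The next step removes the intrinsic weights $K_\nu(t,t)$. The diagonal $x\mapsto K_{PW}(x,x)$ of the Paley--Wiener kernel is the constant $\tau/\pi$ on $\R$, while $K_\nu(x,x)=\sup\{|F(x)|^2:\|F\|_{\H(E^\nu)}=1\}$ by \eqref{CS-ineq} and the reproducing property. Comparing these two extremal characterizations of point evaluation through the norm equivalence $\|\cdot\|_{L^2}\simeq\|\cdot\|_{\H(E^\nu)}$ gives $K_\nu(x,x)\simeq 1$ uniformly on $\R$, in particular on $\Tau_B$. Inserting this into the frame bound \eqref{intro-Dframe} produces
\[
\sum_{t\in\Tau_B}\sum_{j=0}^{\nu-1}|F^{(j)}(t)|^2 \ \simeq\ \sum_{t\in\Tau_B}\sum_{j=0}^{\nu-1}\frac{|F^{(j)}(t)|^2}{K_\nu(t,t)} \ \simeq\ \|F\|_{\H(E^\nu)}^2 \ \simeq\ \|F\|_{L^2}^2 ,
\]
which is the required two-sided bound. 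The lower inequality makes $\Lambda$ injective and bounded below (this is uniqueness of the interpolant), and the upper inequality makes $\Lambda$ bounded.

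It remains to prove surjectivity, i.e. existence of an interpolant for arbitrary data. Given $(c_{t,j})\in(\ell^2(\Z))^\nu$ I would set $F(z)=\sum_{t\in\Tau_B}\sum_{j=0}^{\nu-1}c_{t,j}\,G_{\nu,j}(z,t)$. The frame inequality \eqref{intro-Gframe} identifies $\{K_\nu(t,t)^{1/2}G_{\nu,j}(\cdot,t)\}$ as a frame for $\H(E^\nu)$, whose synthesis operator is bounded from $\ell^2$ into the space; since $K_\nu(t,t)\simeq 1$, the series defining $F$ converges in $\H(E^\nu)=PW$. Because $\H(E^\nu)$ is closed under differentiation and has continuous point evaluations, each functional $F\mapsto F^{(\ell)}(s)$ is continuous and may be applied term by term; the reproducing-type identity \eqref{G-delta-property}, namely $G^{(\ell)}_{\nu,j}(s,t)=\delta_0(s-t)\delta_0(\ell-j)$ for $s,t\in\Tau_B$ and $0\le \ell,j\le\nu-1$, then collapses the double sum to $F^{(\ell)}(s)=c_{s,\ell}$. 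Hence $\Lambda F=(c_{t,j})$, so $\Lambda$ is onto and therefore a bijection, which is the assertion of the corollary. (If one also wishes the equivalent characterization with the exactness clause, the failure of the bounds after deleting a node is supplied by Theorem \ref{Thm2}(3).)

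The main obstacle is the weight-removal step $K_\nu(t,t)\simeq 1$: Theorem \ref{Thm2} is phrased with the intrinsic weights $K_\nu(t,t)$, whereas complete interpolation in $PW$ requires the unweighted $\ell^2$ norms of the data. The comparison of reproducing kernels through the norm equivalence is precisely what bridges this gap, and it is essential that the hypothesis identifies the ambient space with a genuine Paley--Wiener space, whose kernel diagonal is constant, rather than with an arbitrary de Branges space where $K_\nu(t,t)$ need not be comparable to a constant.
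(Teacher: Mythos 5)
Your proposal is correct and follows essentially the route the paper intends: the corollary is left there as an immediate consequence of Theorem \ref{Thm2} together with the noted equivalence between complete interpolation and the unweighted norm equivalence, and your closed-graph argument for $\|F\|_{L^2}\simeq\|F\|_{\mc{H}(E^\nu)}$, the kernel-diagonal comparison $K_\nu(t,t)\simeq 1$ (using that the Paley--Wiener kernel is constant on the diagonal), and the use of \eqref{intro-Dframe}, \eqref{intro-Gframe} and \eqref{G-delta-property} for injectivity and surjectivity are exactly the implicit steps. Your only tacit assumption worth flagging is $\nu\ge 2$ when invoking Theorem \ref{Thm1}, which is consistent with the paper's standing setting.
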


We now describe another application of our main results. There is a variety of examples of de Branges spaces that can be found in \cite[Chapter 3]{dB}. A basic example would be the classical Paley-Wiener space $\H(e^{-i\tau z})$  which consists of entire functions of exponential type at most $\tau$ that have finite $L^2(\R)$-norm when restricted to the real axis. A natural extension of these spaces by using power weights in the $L^2$-norm is the so called homogeneous de Branges spaces. In what follows we briefly review the main aspects these spaces (see also \cite[Section 5]{HV}).

For every real number $\al>-1$ let $J_\al$ denote the classical Bessel function of the first kind. Let 
\est{
E_\alpha(z) = A_\alpha(z)-iB_\alpha(z),
}
where
\est{
A_\alpha(z) & = \Gamma(\alpha+1)(z/2)^{-\alpha}J_\alpha(z), \\
B_\alpha(z) & = \Gamma(\alpha+1)(z/2)^{-\alpha}J_{\alpha+1}(z).
}
It is known that $E_\al$ is a Hermite-Biehler function of bounded type in $\C^+$ and of exponential type exactly $1$. Due to well known estimates for Bessel functions we can deduce that
\es{\label{E-est}
|E_\alpha(x)|\,|x|^{\alpha+1/2} \simeq 1 , \ \ \text{for } 1\lesssim |x|.
}
The defining differential equation of $J_\al$ can also be used to deduce the following equations
\begin{align*}
\begin{split}
A'_\alpha(z)&=-B_\alpha(z), \\ B'_\alpha(z)&=A_\alpha(z)-(2\alpha+1)B_\alpha(z)/z.
\end{split}
\end{align*}
In particular we have
\begin{equation*}
i\frac{E_\alpha'(z)}{E_\alpha(z)}=1 - (2\alpha+1)\frac{B_\alpha(z)}{zE_\alpha(z)},
\end{equation*}
and we conclude that $E_\al'/E_\al\in H^\infty(\C^+)$. Also, since
$$
\p'_\al(x) = \re \bigg[i\frac{E_\alpha'(x)}{E_\alpha(x)}\bigg] = 1 - (2\alpha+1)\frac{A_\al(x)B_\alpha(x)}{x|E_\alpha(x)|^2}
$$
for real $x$, we deduce that $\lim_{|x|\to \infty} \p'_\al(x)=1$ ($\p_\al$ being the phase associated with $E_\al$). This in turn implies that
$$
K_{\al,\nu}(x,x)^{-1} \simeq |E(x)|^{-2\nu} \simeq |x|^{(2\al+1)\nu}
$$
for real $x$ such that $1\lesssim |x|$, where $K_{\al,\nu}(w,z)$ is the reproducing kernel of $\H(E_\al^\nu)$ and $\nu\geq 1$ is an integer.

The space $\H(E_\al^\nu)$ is then well-defined and by \cite[Lemma 12]{HV} and \eqref{E-est} it coincides with the space of entire functions $F$ of exponential type at most $\nu$ and such that
\es{\label{norm-hom}
\int_{|x|\geq 1}|F(x)|^2 \omega_\al(x)^{\nu}\dx <\infty,
}
where $\omega_\al(x)=|x|^{2\al+1}$ if $|x|\geq 1$ and $\omega_\al(x)=1$ if $|x|\leq 1$. Let $\{a_{\al,n}\}_{n\in\Z\setminus \{0\}}$ denote the real zeros of $A_\al$ (note that these are exactly the non-zero Bessel zeros of $J_\al$).

\begin{corollary}\label{hom-rec}
Let $\alpha>-1$ be a real number and $\nu\geq 1$ be an integer. Let $F$ be an entire function of exponential type at most $\nu$ such that \eqref{norm-hom} is finite. Then we have
\est{
\int_{\R}|F(x)|^2\omega_{\alpha}(x)^{\nu}\dx \simeq  \sum_{n\in\Z\setminus \{0\}} \sum_{j=0}^{\nu-1}|F^{(j)}(a_{\alpha,n})|^2|a_{\al,n}|^{(2\al+1)\nu},
}
where the implied constant does not depend on $F$. Moreover, there exists functions $\{G_{\al,\nu,n}\}_{n\in\Z\setminus \{0\}}$ such that
$$
F(z) = \sum_{n\in\Z\setminus \{0\}} \sum_{j=0}^{\nu-1} F^{(j)}(a_{\alpha,n}) G_{\al,\nu,n}(z),
$$
where the series converges to $F(z)$ in the norm \eqref{norm-hom} and uniformly in compact sets of $\C$.
\end{corollary}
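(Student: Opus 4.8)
The plan is to reduce everything to Theorem \ref{Thm2} (for $\nu\ge 2$) and to the classical de Branges identities \eqref{norm-HE} (for $\nu=1$) by rotating $E_\al$ so that the zeros of $A_\al$ become the $B$-zeros of the rotated function. First I would set $\widetilde{E}=-iE_\al$. Writing $\widetilde{E}=\widetilde{A}-i\widetilde{B}$ one finds $\widetilde{E}=-B_\al-iA_\al$, so $\widetilde{A}=-B_\al$ and $\widetilde{B}=A_\al$, and thus $\Tau_{\widetilde B}$ is precisely the zero set $\{a_{\al,n}\}$ of $A_\al$. Since $-i$ is unimodular, $\widetilde{E}^{\nu}=(-i)^\nu E_\al^\nu$ is Hermite--Biehler with no real zeros (recall $E_\al(0)=1$ and $|E_\al(x)|\simeq|x|^{-\al-1/2}$ for large $|x|$), and it generates the same de Branges space as $E_\al^\nu$, with the same norm and the same reproducing kernel $K_{\al,\nu}$.

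Next I would check the hypotheses of Theorem \ref{Thm2} for $\widetilde E$. The condition $\widetilde{E}'/\widetilde{E}\in H^\infty(\C^+)$ holds because the unimodular constant cancels in the logarithmic derivative, giving $\widetilde{E}'/\widetilde{E}=E_\al'/E_\al$, which was already shown in the excerpt to lie in $H^\infty(\C^+)$. For the phase condition, the phase $\widetilde{\p}$ of $\widetilde{E}=e^{-i\pi/2}E_\al$ equals $\p_\al+\pi/2$, so $\widetilde{\p}'=\p_\al'$; since $\p_\al'$ is continuous, strictly positive on $\R$, and tends to $1$ as $|x|\to\infty$, it is bounded below by a positive constant on all of $\R$, in particular $1\lesssim\widetilde{\p}'(t)$ for $t\in\Tau_{\widetilde B}$.

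With the hypotheses verified, for $\nu\ge2$ I would invoke Theorem \ref{Thm2}: formula \eqref{intro-Gdual} for $\widetilde{E}$ gives the stated interpolation formula, with the interpolating functions taken to be the $G_{\nu,j}(\cdot,a_{\al,n})$ of \eqref{intro-Gdef} built from $\widetilde{E}$ (carrying also the derivative index $j$), while the frame bound \eqref{intro-Dframe} yields
\[
\sum_{n\in\Z\setminus\{0\}}\sum_{j=0}^{\nu-1}\frac{|F^{(j)}(a_{\al,n})|^2}{K_{\al,\nu}(a_{\al,n},a_{\al,n})}\simeq\|F\|^2_{\H(E_\al^\nu)}.
\]
The case $\nu=1$ I would treat directly with \eqref{norm-HE} applied to $\widetilde{E}$, which is legitimate once $\widetilde{B}=A_\al\notin\H(E_\al)$ is checked; this follows from the Bessel asymptotics, which give $|A_\al/E_\al|^2\to\tfrac12$ on average and hence divergence of the defining integral. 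It then remains to rewrite both sides in homogeneous form: \eqref{E-est} together with the continuity and non-vanishing of $E_\al$ on $[-1,1]$ gives $|E_\al(x)|^{-2\nu}\simeq\omega_\al(x)^\nu$ uniformly on $\R$, so $\|F\|^2_{\H(E_\al^\nu)}\simeq\int_\R|F|^2\omega_\al^\nu\dx$, and likewise $K_{\al,\nu}(a_{\al,n},a_{\al,n})^{-1}\simeq|a_{\al,n}|^{(2\al+1)\nu}$ uniformly in $n$. The identification of the admissible class of $F$ with $\H(E_\al^\nu)$, already recorded in the excerpt via \cite[Lemma 12]{HV} and \eqref{E-est}, shows the corollary concerns exactly the functions in Theorem \ref{Thm2}, and the equivalence of norms transfers convergence of the series from $\H(E_\al^\nu)$ to the norm in \eqref{norm-hom}.

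The one genuinely new idea is the rotation $\widetilde{E}=-iE_\al$ that swaps the roles of $A_\al$ and $B_\al$; everything after that is verification. The step most likely to need care is the \emph{uniformity} of the two weight comparisons down to the origin, namely $K_{\al,\nu}(a_{\al,n},a_{\al,n})^{-1}\simeq|a_{\al,n}|^{(2\al+1)\nu}$ and $|E_\al|^{-2\nu}\simeq\omega_\al^\nu$ with constants independent of $n$ and of $x$: here the large-$|x|$ estimate \eqref{E-est} must be supplemented by a compactness argument on bounded sets, using that the finitely many small Bessel zeros contribute positive, bounded values of $K_{\al,\nu}$.
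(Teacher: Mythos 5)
Your proposal is correct and follows exactly the route the paper intends: the paper states Corollary \ref{hom-rec} without a separate proof, having already assembled its ingredients ($E_\al'/E_\al\in H^\infty(\C^+)$, $\p_\al'\to 1$, $K_{\al,\nu}(x,x)^{-1}\simeq |x|^{(2\al+1)\nu}$, and the identification of $\H(E_\al^\nu)$ via \cite[Lemma 12]{HV}), so that it is a direct application of Theorem \ref{Thm2} after rotating $E_\al$ by a unimodular constant to turn the zeros of $A_\al$ into $B$-zeros --- precisely the device the paper itself employs with $E_\theta=e^{i\theta}E$ in the proof of Proposition \ref{upper-phase-bound}. Your additional verifications (the $\nu=1$ case via \eqref{norm-HE} with $A_\al\notin\H(E_\al)$, and the uniformity of the weight comparisons near the origin for the finitely many small Bessel zeros) are sound and fill in details the paper leaves implicit.
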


\section{Acknowledgments}
F. Gon\c{c}alves acknowledges the support from the Science Without Borders program CNPq-Brazil grant 201697/2014-9 and the University of Alberta StartUp Funds. Part of this work was completed during two visits to North Dakota State University, for which he gratefully acknowledges the support.


\section{Differentiation operator on $\mc{H}(E^\nu)$}\label{Thm1-proof-section}

We prove Theorem \ref{Thm1} in this section. We require some auxiliary results about spaces $\mc{H}(E^\nu)$ on which the derivative defines a bounded operator. Throughout this section $E$ denotes a Hermite-Biehler entire function with phase $\varphi$. 

\begin{proposition}\label{upper-phase-bound} Let $\nu\ge 2$ be an integer. Assume that differentiation defines a bounded operator on $\mc{H}(E^\nu)$ with norm $\mc{D}$. Then:
\begin{enumerate}
\item For all real $x$
\begin{align}\label{phase-upperbound}
\varphi'(x) \le \mc{D} \sqrt{\nu};
\end{align}
\item  The zeros  of $E$ are separated from the real axis, that is, if $E(x-iy)=0$ for $x,y\in\R$ then
\begin{align}\label{Ezero-lowerbound}
y \geq \frac{1}{\D\sqrt{\nu}}.
\end{align}
\end{enumerate}
\end{proposition}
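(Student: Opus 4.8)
The plan is to prove \eqref{phase-upperbound} first and then derive \eqref{Ezero-lowerbound} from it. The whole argument rests on one inequality: since $\|D^{\nu-1}\|\le\mc D^{\nu-1}$ for the differentiation operator $D$, applying the Cauchy--Schwarz bound \eqref{CS-ineq} to $G=F^{(\nu-1)}=D^{\nu-1}F\in\H(E^\nu)$ gives
\[
|F^{(\nu-1)}(t)|\le \mc D^{\nu-1}\sqrt{K_\nu(t,t)}\,\|F\|_{\H(E^\nu)}
\]
for every $F\in\H(E^\nu)$ and every real $t$. I would then insert a well-chosen test function and read off a lower bound on $\mc D$.

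For \eqref{phase-upperbound} I would first normalize so that the given point lies in $\mc{T}_B$. For an arbitrary real $x$, put $\alpha=\p(x)$ and replace $E$ by $e^{i\alpha}E$: this is again Hermite--Biehler, $\H((e^{i\alpha}E)^\nu)=\H(E^\nu)$ isometrically (so $\mc D$ is unchanged), its phase is $\p-\alpha$ (so $\p'$ is unchanged), and $x$ becomes a zero of the associated $B$-function. Thus it suffices to bound $\p'(t)$ for $t\in\mc{T}_B$. There I would take $F(z)=B(z)^\nu/(z-t)\in\H(E^\nu)$, which vanishes to order $\nu-1$ at $t$, so that $F^{(\nu-1)}(t)=(\nu-1)!\,B'(t)^\nu$. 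The two facts I would use are the boundary identities $|B'(t)|=|E(t)|\,\p'(t)$ and $K_\nu(t,t)=\nu\,\p'(t)\,|E(t)|^{2\nu}/\pi$, both immediate from \eqref{phi-id} and the fact that the phase of $E^\nu$ equals $\nu\p$.

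The last ingredient is an upper bound on $\|F\|_{\H(E^\nu)}$, and here the only estimate needed is the pointwise $|B(x)/E(x)|=|\sin\p(x)|\le 1$. It gives
\[
\|F\|_{\H(E^\nu)}^2=\int_\R\frac{|\sin\p(x)|^{2\nu}}{(x-t)^2}\,\dx\le\int_\R\frac{\sin^2\p(x)}{(x-t)^2}\,\dx=\Big\|\tfrac{B(z)}{z-t}\Big\|_{\H(E)}^2=\pi\,\p'(t),
\]
the last equality following from $B(z)/(z-t)=\pi A(t)^{-1}K(t,z)$ (valid because $B(t)=0$) together with $K(t,t)=\p'(t)|E(t)|^2/\pi$. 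Substituting the three computed quantities into the displayed inequality and cancelling $|E(t)|^\nu$ together with one factor of $\p'(t)$ collapses it to $(\nu-1)!\,\p'(t)^{\nu-1}\le\sqrt\nu\,\mc D^{\nu-1}$. Taking $(\nu-1)$-st roots and using the elementary bound $(\nu-1)!\,\nu^{(\nu-2)/2}\ge 1$ for $\nu\ge2$ yields $\p'(t)\le\mc D\sqrt\nu$.

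Finally I would deduce \eqref{Ezero-lowerbound} from \eqref{phase-upperbound}. If $E(w_0)=0$ with $w_0=x-iy$, $y>0$, factor $E=(z-w_0)G$ with $G$ Hermite--Biehler; then $E'/E=(z-w_0)^{-1}+G'/G$, and since phase derivatives add, \eqref{phi-id} gives $\p'(x)=\re\{i/(x-w_0)\}+\p_G'(x)=1/y+\p_G'(x)\ge 1/y$, because $\p_G'\ge0$. Combined with \eqref{phase-upperbound} at the same point this gives $1/y\le\mc D\sqrt\nu$, which is \eqref{Ezero-lowerbound}. The step I expect to require the most care is the balancing of powers in part (1): the $(\nu-1)$-st derivative of the test function scales like $\p'(t)^{\nu}$ while both $\sqrt{K_\nu(t,t)}$ and $\|F\|_{\H(E^\nu)}$ scale like $\p'(t)^{1/2}$, so that each of the $\nu-1$ factors of $\mc D$ absorbs exactly one net power of $\p'(t)$; it is the specific choice $F=B(z)^\nu/(z-t)$ (among the functions $B_{\nu,j}$) that makes this bookkeeping close, and the crude bound $|\sin\p|^{2\nu}\le\sin^2\p$ is what keeps the whole estimate free of any hypothesis beyond boundedness of $D$.
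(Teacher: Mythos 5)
Your argument is correct in substance but takes a genuinely different route from the paper on both items. For item (1), the paper differentiates \emph{once}: it uses the test function $F(z)=E(z)^{\nu-2}B(z)K(t,z)$, computes $F'(t)=A(t)^{\nu-1}B'(t)^2/\pi$ and $\|F\|^2_{E^\nu}\le B'(t)A(t)/\pi$, and applies \eqref{CS-ineq} with a single factor of $\D$; you instead take the $(\nu-1)$-st derivative of $B(z)^\nu/(z-t)$ and pay $\D^{\nu-1}$, letting each factor of $\D$ absorb one net power of $\p'(t)$. Your bookkeeping is sound: $F^{(\nu-1)}(t)=(\nu-1)!\,B'(t)^\nu$, the identities $|B'(t)|=|E(t)|\p'(t)$ and $K_\nu(t,t)=\nu\p'(t)|E(t)|^{2\nu}/\pi$ agree with \eqref{phi-id} and \eqref{K-nu-kernel-id}, the bound $\|F\|^2_{E^\nu}\le\pi\p'(t)$ is right, and the resulting $(\nu-1)!\,\p'(t)^{\nu-1}\le\sqrt{\nu}\,\D^{\nu-1}$ yields $\p'(t)\le\D\bigl(\sqrt{\nu}/(\nu-1)!\bigr)^{1/(\nu-1)}\le\D\sqrt{\nu}$ via $(\nu-1)!\,\nu^{(\nu-2)/2}\ge1$ — in fact a slightly stronger constant than the paper's for large $\nu$. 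The rotation trick $E\mapsto e^{i\alpha}E$ is identical to the paper's. For item (2), the paper invokes \cite[Proposition 1.2]{Bar2} to get that $E$ has exponential type, writes the Hadamard product, and reads off $\p'(x)=\re a+\sum_n y_n\bigl((x-x_n)^2+y_n^2\bigr)^{-1}\ge 1/y_n$ at $x=x_n$; your division of a single zero plus additivity of $\re\{iE'/E\}$ is more elementary, avoiding the growth-theoretic input entirely. One caveat there: $G=E/(z-w_0)$ may be Hermite--Biehler only in the degenerate sense $|G^*|\le|G|$ (e.g.\ $E=z-w_0$ gives $G=1$), in which case $G$ is a unimodular constant times a real entire function and $\re\{iG'(x)/G(x)\}=0$ on $\R$, which still suffices.

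There is, however, one genuine (if small) gap: you never exclude real zeros of $E$, and your argument as written cannot. Your factorization only treats zeros $x-iy$ with $y>0$; for a hypothetical real zero $x_0$ the term $\re\{i/(x-x_0)\}$ vanishes at real $x\ne x_0$ and gives no contradiction, while \eqref{Ezero-lowerbound} asserts in particular that real zeros do not occur. Item (1) also silently needs this: after rotation, if $E(x)=0$ then $A_\theta$ and $B_\theta$ vanish simultaneously, the identities $|B_\theta'(x)|=|E(x)|\p'(x)$ and $K_\nu(x,x)=\nu\p'(x)|E(x)|^{2\nu}/\pi$ degenerate to $0=0$, your test function's $(\nu-1)$-st derivative is $0$, and \eqref{phi-id} fails at $x$. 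The paper closes this at the start of its proof of item (2) with a one-line argument you should insert up front: if $E(x_0)=0$ for real $x_0$, then $K_\nu(x_0,x_0)=0$, so by \eqref{CS-ineq} every $F\in\H(E^\nu)$ vanishes at $x_0$; closedness under differentiation then forces all derivatives of $F$ to vanish at $x_0$, so $F\equiv0$, contradicting $\H(E^\nu)\ne\{0\}$. With that line added, your proof is complete.
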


\begin{proof} Item (1).  Using the reproducing kernel definition \eqref{Intro_Def_K_E} we deduce that 
$$
K_\nu(x,x) =  \nu |E(x)|^{2(\nu-1)} K(x,x),
$$
for all real $x$, where $K_\nu(w,z)$ and $K(w,z)$ are respectively the reproducing kernels associated with $\H(E^\nu)$ and $\H(E)$. This implies that 
\es{\label{K-nu-kernel-id}
K_\nu(t,t) = \frac{\nu}{\pi} A(t)^{2\nu-1} B'(t),
}
for all $t\in \mc{T}_B$. We prove \eqref{phase-upperbound} first for  $x=t \in \mc{T}_B$. Consider the entire function $F$ defined by
\[
F(z) = E(z)^{\nu-2}B(z)K(t,z).
\]

Evidently, $F\in \mc{H}(E^\nu)$ and by \eqref{CS-ineq} we obtain
\[
|F'(t)|^2 \le \|F'\|^2_{E^\nu} K_\nu(t,t) \le \D^2 \|F\|^2_{E^\nu} K_\nu(t,t).
\]

A direct calculation gives
$$
\|F\|^2_{E^\nu} \leq \|K(t,z)\|^2_{E} = B'(t)A(t) /\pi
$$
and
$F'(t) = A(t)^{\nu-1} B'(t)^2/\pi$. Using identity \eqref{phi-id} in the form $\varphi'(t) = B'(t)/A(t)$ for $t\in \mc{T}_B$ implies \eqref{phase-upperbound} for  $t \in \mc{T}_B$.

Now, let $\theta\in\R$ be arbitrary and denote by $\varphi_\theta(x)$ the phase of $E_\theta(z) = e^{i\theta} E(z) = A_\theta(z)-iB_\theta(z)$. Observe that $\varphi_\theta(x)=\p(x)+\theta$ for all real $x$ and $E^\nu$ and $E_\theta^\nu$ generate the same space. Thus $E_\theta$ does not have real zeros and the real zeros of $B_\theta$ coincide with the points $\p(x)\equiv \theta \mod \pi$. Hence, the above argument for the space $\mc{H}(E_\theta^\nu)$ gives the claim for arbitrary $x\in\R$.

\noindent Item (2).  By \cite[Proposition 1.2]{Bar2} the function $E$ is of exponential type. Moreover, it cannot have a real zero $z_0$, since every function $F\in\H(E^\nu)$ would vanish at $z_0$. This is not possible since $F'\in\H(E^\nu)$ for all $F\in\H(E^\nu)$.

We conclude that $E$ has the following form (see \cite[Theorem 7.8.3]{Bo})
\begin{align*}
E(z) = Ce^{-iaz}\prod \bigg(1-\frac{z}{\ov w_n}\bigg)e^{ z\re (\frac{1}{w_n})},
\end{align*}
where $C$ is a constant, $\re a \geq 0$ and $\ov w_n=x_n-iy_n$ are the zeros of $E$ which satisfy
$$
\sum_{n} \frac{1+y_n}{|w_n|^2} < \infty.
$$
This allow us to deduce that the derivative of the phase function associated with $E$ is given by
$$
\p'(x) = \re a + \sum_{n} \frac{y_n}{(x-x_n)^2+y_n^2}.
$$
We deduce that $\p'(x_n)\geq y_n^{-1}$. The proof of this item is concluded once we use \eqref{phase-upperbound}.
\end{proof}

\begin{corollary}\label{ABcorollary}
   Let $\nu\ge 2$ be an integer. If differentiation defines a bounded operator on $\mc{H}(E^\nu)$, then  
 $A,B\notin \H(E)$.
\end{corollary}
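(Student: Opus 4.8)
The plan is to reduce the memberships $A\in\H(E)$ and $B\in\H(E)$ to the finiteness of two explicit integrals, and then to use the uniform upper bound on the phase derivative supplied by Proposition \ref{upper-phase-bound} to show that \emph{both} integrals diverge.

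First I would express the two norms through the phase. Taking real and imaginary parts in the identity $e^{2i\varphi(x)}=(A(x)^2-B(x)^2+2iA(x)B(x))/|E(x)|^2$ recorded in Section \ref{dB-spaces} gives $A(x)^2/|E(x)|^2=\cos^2\varphi(x)$ and $B(x)^2/|E(x)|^2=\sin^2\varphi(x)$ for real $x$, whence
\[
\|A\|_E^2=\int_\R\cos^2\varphi(x)\,\dx\qquad\text{and}\qquad\|B\|_E^2=\int_\R\sin^2\varphi(x)\,\dx.
\]
Since every $F\in\H(E)$ satisfies $\|F\|_E<\infty$, it is enough to show that both integrals are infinite. (Only this easy direction is needed; membership would in addition require the bounded type and mean type conditions, which for $A,B$ hold automatically because $A/E=\tfrac12(1+E^*/E)$ and $B/E=\tfrac{i}{2}(1-E^*/E)$ with $|E^*/E|<1$ in $\C^+$.)

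Next I would bring in Proposition \ref{upper-phase-bound}: boundedness of differentiation on $\H(E^\nu)$ with operator norm $\mc{D}$ gives $0<\varphi'(x)\le \mc{D}\sqrt{\nu}=:M$ for all real $x$. Thus $\varphi$ is a strictly increasing $C^1$ diffeomorphism of $\R$ onto an interval $(\varphi_-,\varphi_+)$, and the substitution $u=\varphi(x)$, $\dx=\du/\varphi'$, combined with $\varphi'\le M$, yields
\[
\|B\|_E^2=\int_{\varphi_-}^{\varphi_+}\frac{\sin^2u}{\varphi'(\varphi^{-1}(u))}\,\du\ \ge\ \frac1M\int_{\varphi_-}^{\varphi_+}\sin^2u\,\du,
\]
and likewise $\|A\|_E^2\ge M^{-1}\int_{\varphi_-}^{\varphi_+}\cos^2u\,\du$. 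As $\sin^2$ and $\cos^2$ each have positive mean, both right-hand sides are infinite the moment the range $(\varphi_-,\varphi_+)$ has infinite length, giving $A,B\notin\H(E)$.

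Finally I would verify $\varphi_+-\varphi_-=\infty$ using the representation $\varphi'(x)=\re a+\sum_n y_n/((x-x_n)^2+y_n^2)$ established in the proof of Proposition \ref{upper-phase-bound}, where $\re a\ge0$ and the zeros $\ov w_n=x_n-iy_n$ of $E$ are bounded away from $\R$. Integrating term by term, $\varphi_+-\varphi_-=\int_\R\varphi'(x)\,\dx$ is infinite whenever $\re a>0$ or $E$ has infinitely many zeros, i.e. whenever $E$ is of positive exponential type; this is the operative setting (for a polynomial $E$ the space is finite dimensional and the argument, and indeed the statement, must be handled separately). The main obstacle is precisely the \emph{separation} of the two functions: the trivial identity $\|A\|_E^2+\|B\|_E^2=\int_\R 1\,\dx=\infty$ only rules out $A$ and $B$ lying in $\H(E)$ simultaneously, and the whole point of the uniform bound $\varphi'\le M$ is to keep $\varphi$ from sweeping through the zeros of $\sin$ (resp.\ $\cos$) arbitrarily fast, so that each oscillatory integral accumulates a fixed amount of mass on every interval on which $\varphi$ increases by $\pi$. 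Without such a bound one of $A,B$ could genuinely belong to $\H(E)$.
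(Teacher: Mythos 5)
Your argument is correct in substance, and it takes a genuinely different route from the paper. The paper's proof is a two-line reduction to an external result: by \cite[Corollary 2]{Bar3}, membership of $A$ or $B$ in $\H(E)$ forces the zeros $x_n-iy_n$ of $E$ to satisfy $\sum_n y_n<\infty$, which is then contradicted by the separation bound \eqref{Ezero-lowerbound} from Proposition \ref{upper-phase-bound}. You instead work directly with the phase: from $A^2+B^2=|E|^2$ on $\R$ you get $\|A\|_E^2=\int_\R\cos^2\varphi$ and $\|B\|_E^2=\int_\R\sin^2\varphi$, and the upper bound $\varphi'\le\mc{D}\sqrt{\nu}=:M$ of \eqref{phase-upperbound}, after the substitution $u=\varphi(x)$, bounds each norm from below by $M^{-1}$ times the total increase of $\varphi$, which is infinite once $\re a>0$ or $E$ has infinitely many zeros (by the representation $\varphi'(x)=\re a+\sum_n y_n/((x-x_n)^2+y_n^2)$ already derived in the proof of Proposition \ref{upper-phase-bound}; note also that the strict positivity $\varphi'>0$ you use implicitly requires $E$ zero-free on $\R$, which is secured in that same proof). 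Your route is self-contained and quantitative, and it makes visible exactly where the hypothesis enters: the bound $\varphi'\le M$ prevents the phase from sweeping past the zeros of $\sin$ (resp.\ $\cos$) too fast, so that each norm accumulates mass $\gtrsim M^{-1}$ per period. The paper's proof is shorter but leans on \cite{Bar3} and hides this mechanism. It is worth noting that the two proofs share the same blind spot, which you flag and the paper does not: the paper's contradiction between $\sum_n y_n<\infty$ and $y_n\ge(\mc{D}\sqrt{\nu})^{-1}$ is only a contradiction when $E$ has infinitely many zeros, which is precisely your exceptional case. Your caution there is warranted, and in fact the statement genuinely fails in the degenerate case rather than merely requiring a separate argument: for $E(z)=z+i$ one has $B\equiv -1\in\H(E)$, while $\H(E^\nu)$ consists of polynomials of degree at most $\nu-1$, a finite-dimensional space on which differentiation is trivially bounded; so the corollary (and its quotation of \cite{Bar3}) implicitly excludes such $E$, and your proof covers exactly the same cases as the paper's.
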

\begin{proof}
  We can apply \cite[Corollary 2]{Bar3} to deduce that if $B\in \mc{H}(E)$ (or $A\in \mc{H}(E)$) then the zeros $x_n- iy_n$ of $E$ satisfy $\sum_n y_n<\infty$. This contradicts \eqref{Ezero-lowerbound}.
\end{proof}

\begin{proof}[Proof of Theorem \ref{Thm1}] If $E'/E\in H^\infty(\C^+)$, then \cite[Theorem 3.2]{Bar} applied to $E^\nu$ shows that differentiation defines a bounded operator on $\mc{H}(E^\nu)$ for all $\nu\geq 1$. 

If differentiation defines a bounded operator on $\mc{H}(E^\nu)$ for some $\nu\ge 2$, then \eqref{Ezero-lowerbound}  holds. It follows that $E'/E\in H^\infty(\C^+)$ by \cite[Theorem A]{Bar2}.
\end{proof}


\section{Frames for $\mc{H}(E^\nu)$}\label{derivative-frames}

As in the previous section $E$ is a Hermite-Biehler function with phase $\varphi$. Recall that $\mc{B}_\nu$ is the collection of functions
\[
B_{\nu,j}(z,t) = \frac{B(z)^\nu}{(z-t)^j}
\]
where $t\in \mc{T}_B$ and $1\le j\le \nu$, and $\mc{G}_\nu$ is the collection of functions $G_{\nu,j}(z,t)$  defined in \eqref{intro-Gdef}. The recipe for the proof of Theorem \ref{Thm2} is the following:
\begin{enumerate}
\item We show that the span of the collection $\G_\nu$ is dense in $\H(E^\nu)$, which by \eqref{G-delta-property} implies that there exists a dense set of functions in $\H(E^\nu)$  for which \eqref{intro-Gdual} holds;

\item  We derive estimates involving the inner products of the collection $\G_\nu$ in order to prove   Theorem
\ref{Thm2} item (2) for a dense set of functions (and hence for the whole space). 
\end{enumerate}

\begin{lemma}\label{BGdense} Assume $E$ has no real zeros, $B\not\in\H(E)$, and $\nu\geq 1$. Then the span of $\mc{B}_\nu$ and the span of $\G_\nu$ defined in \eqref{intro-Bdef} and \eqref{intro-Gdef} are both dense in $\mc{H}(E^\nu)$. 
\end{lemma}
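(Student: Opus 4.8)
The plan is to prove the two density statements in two stages: first reduce the assertion about $\G_\nu$ to the one about $\mc{B}_\nu$, and then establish density of $\operatorname{span}\mc{B}_\nu$ by showing its orthogonal complement is trivial.

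For the reduction, I fix $t\in\mc{T}_B$ and read off from \eqref{intro-Gdef} and \eqref{1/B-P-def} that
\[
G_{\nu,j}(z,t)=\frac{1}{j!}\,\frac{B(z)^\nu}{(z-t)^{\nu-j}}\sum_{n=0}^{\nu-j-1}a_{\nu,n}(t)(z-t)^n=\frac{1}{j!}\sum_{k=1}^{\nu-j}a_{\nu,\nu-j-k}(t)\,B_{\nu,k}(z,t),
\]
so $G_{\nu,j}(\cdot,t)$ is a combination of $B_{\nu,k}(\cdot,t)$ with $1\le k\le\nu-j$ whose top coefficient, at $k=\nu-j$, equals $a_{\nu,0}(t)/j!=B'(t)^{-\nu}/j!\neq 0$. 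Thus, for fixed $t$, the transition matrix from $\{B_{\nu,k}(\cdot,t)\}_{k=1}^{\nu}$ to $\{G_{\nu,j}(\cdot,t)\}_{j=0}^{\nu-1}$ is triangular with nonvanishing diagonal, hence invertible, and the two finite families span the same subspace. Therefore $\operatorname{span}\G_\nu=\operatorname{span}\mc{B}_\nu$, and it suffices to treat $\mc{B}_\nu$.

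Next I characterize the orthogonal complement. Suppose $F\in\H(E^\nu)$ is orthogonal to every $B_{\nu,j}(\cdot,t)$. Writing the pairing as an integral gives $\langle F,B_{\nu,j}(\cdot,t)\rangle_{E^\nu}=\int_\R g(x)(x-t)^{-j}\,\dx$ with $g=F\,B^\nu|E|^{-2\nu}$. Since $B^\nu$ vanishes to order exactly $\nu$ at each $t\in\mc{T}_B$, the density $g$ vanishes to order $\nu$ there, so each integral converges absolutely; moreover $g\in L^2(\R)$ because $F/E^\nu\in H^2(\C^+)$ while $B^\nu/(E^*)^\nu\in H^\infty$ (it has modulus $\le 1$ on $\R$). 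I split $g=g_++g_-$ into its Riesz projections onto $H^2(\C^+)$ and $H^2(\C^-)$. Evaluating each one-sided integral $\int_\R g_\pm(x)(x-t)^{-j}\dx$ by closing the contour in the corresponding half-plane and indenting around $t$ produces the half-residue $\pm\tfrac{\pi i}{(j-1)!}g_\pm^{(j-1)}(t)$; since $g=g_++g_-$ vanishes to order $\nu$ at $t$ we also have $g_+^{(\ell)}(t)=-g_-^{(\ell)}(t)$ for $0\le\ell\le\nu-1$. The hypothesis that the full integral is zero then forces $g_+^{(j-1)}(t)=g_-^{(j-1)}(t)$, and combining the two relations yields $g_+^{(\ell)}(t)=g_-^{(\ell)}(t)=0$ for $0\le\ell\le\nu-1$ and every $t\in\mc{T}_B$; that is, \emph{both} Hardy projections of $g$ vanish to order $\nu$ on $\mc{T}_B$.

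To finish I set $u=F/E^\nu$ and $\Theta=E^*/E$, so that $u$ lies in the model space $K_{\Theta^\nu}=H^2(\C^+)\ominus\Theta^\nu H^2(\C^+)$, which decomposes orthogonally as $\bigoplus_{m=0}^{\nu-1}\Theta^m K_\Theta$; write $u=\sum_m\Theta^m u_m$ with $u_m\in K_\Theta$. A direct computation expresses $g_+$ as $\sum_{m=0}^{\nu-1}Q_m(\Theta)\,u_m$ for explicit polynomials $Q_m$ of degree $m$. Using $\Theta(t)=1$ and $|\Theta'(t)|=2\p'(t)>0$ — which holds for all real $t$ by \eqref{phi-id}, so that, importantly, \emph{no} lower bound on $\p'$ is required here — the order-$\nu$ vanishing of $g_+$ at each $t$ translates into a triangular system in the jets of the $u_m$ at $t$. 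Processing this system from order zero upward and invoking at each step that a function in $K_\Theta$ (equivalently, in $\H(E)$ after multiplication by $E$) is determined by its values on $\mc{T}_B$ — the completeness encoded in \eqref{norm-HE}, valid because $B\notin\H(E)$ — one peels off one identity among the $u_m$ at a time: the zeroth-order conditions force a fixed linear combination of the $u_m$ to vanish identically, which collapses the first-order conditions to pure value conditions forcing the next combination to vanish, and so on, until all $u_m=0$, i.e. $F=0$. I expect the \textbf{main obstacle} to be precisely this inductive unwinding: verifying that the order-$\nu$ conditions genuinely telescope, so that each higher-order condition reduces to a value condition once the lower-order identities are substituted, and that the pertinent diagonal coefficients (built from $\Theta'(t)$ and the $Q_m$) never vanish. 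I would confirm the pattern and the exact constants by carrying out $\nu=2$ and $\nu=3$ explicitly before formulating the general inductive step.
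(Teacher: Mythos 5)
You take a genuinely different route from the paper, and it can be made to work, but as written it has one technically false intermediate claim and one explicitly deferred key step. For comparison: the paper proves density of $\operatorname{span}\mc{B}_\nu$ by induction on $\nu$, using the splitting $\H(E^{\nu+1})=A\,\H(E^\nu)\oplus B\,\H(E^\nu)$ (from Baranov's decomposition $\H(E_aE_b)=E_a^*\H(E_b)\oplus E_b\H(E_a)$) together with an explicit algebraic identity writing $A\,B_{\nu,\nu}(\cdot,t)$ as a multiple of $B_{\nu+1,\nu+1}(\cdot,t)$ minus $B$ times an element of $\H(E^\nu)$; no duality or contour integration appears, and the base case is de Branges' Theorem 22. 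You instead compute the orthogonal complement of $\operatorname{span}\mc{B}_\nu$ directly, transferring to the model space $K_{\Theta^\nu}=\bigoplus_{m=0}^{\nu-1}\Theta^m K_\Theta$ with $\Theta=E^*/E$ (this is the same structural fact as Baranov's splitting, in Hardy-space language) and converting orthogonality into jet conditions on the Riesz projections of $g=FB^\nu|E|^{-2\nu}$ at the points of $\mc{T}_B$. Your reduction of $\G_\nu$ to $\mc{B}_\nu$ through the triangular matrix with diagonal entries $a_{\nu,0}(t)/j!=B'(t)^{-\nu}/j!\neq0$ is exactly the paper's argument (compare \eqref{G-B-relation} and \eqref{matrix-eq}). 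What your route buys: it isolates precisely where $B\notin\H(E)$ enters (through \eqref{norm-HE}: an element of $K_\Theta$ vanishing on $\mc{T}_B$ is zero), and your observation that only $\Theta'(t)=2i\p'(t)\Theta(t)\neq0$ pointwise is needed, with no lower bound on $\p'$, is correct and matches the lemma's hypotheses. What the paper's route buys: it is purely algebraic and avoids all boundary-regularity and principal-value issues.

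Two repairs are needed. First, the half-residue evaluation is false as stated for $j\ge2$: the one-sided integrals $\int_\R g_\pm(x)(x-t)^{-j}\,\dx$ do not exist individually, since a pole of order $j\ge2$ on the contour admits no principal value; the $\epsilon$-indented integrals contain divergent terms $c_k^{\pm}\,\epsilon^{\,k-j+1}$ for $k<j-1$, where $c_k^{\pm}$ are the Taylor coefficients of $g_\pm$ at $t$. You must take the combined limit and use $c_k^++c_k^-=g^{(k)}(t)/k!=0$ for $k\le\nu-1$ to cancel the divergences; what survives is a nonzero multiple of $c_{j-1}^+-c_{j-1}^-$, which together with $c_{j-1}^++c_{j-1}^-=0$ yields your conclusion $c_{j-1}^{\pm}=0$. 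You should also justify that $g_\pm$ extend analytically across a neighborhood of each $t$ (true because $g$ is analytic near $\R$, by a local Plemelj argument, as $E$ has no real zeros) and replace naive contour closing for $H^2$ functions by the Cauchy formula in Hardy spaces. Second, the inductive unwinding you defer does close, and here is the mechanism you were looking for: with $g_+=\sum_m Q_m(\Theta)u_m$, $\deg Q_m=m$ and leading coefficient $(i/2)^\nu(-1)^\nu\neq0$, set $v_s=\sum_m Q_m^{(s)}(1)\,u_m\in K_\Theta$. In the Fa\`a di Bruno expansion of $\partial_x^r\big[Q_m(\Theta)u_m\big]$ at $t$, every Bell-polynomial term carries some $Q_m^{(s)}(1)$ with $s\le r$; once $v_0\equiv\dots\equiv v_{r-1}\equiv0$ (so all their derivatives vanish identically), the order-$r$ condition collapses to $(\Theta'(t))^r\,v_r(t)=0$, and since $\Theta'(t)\neq0$ and $v_r\in K_\Theta$, relation \eqref{norm-HE} forces $v_r\equiv0$. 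Finally the matrix $\big(Q_m^{(s)}(1)\big)_{0\le s,m\le\nu-1}$ is triangular with nonvanishing diagonal $m!\,(i/2)^\nu(-1)^\nu$ because $\deg Q_m=m$, so all $u_m\equiv0$ and $F=0$. With these two repairs your argument is complete and stands as a legitimate alternative to the paper's induction.
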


\begin{proof} First we show via induction on $\nu$ that the span of the collection $\B_\nu$ is dense in $\H(E^\nu)$. It follows from \cite[Theorem 22]{dB} that the span of $\mc{B}_1$ is dense in $\H(E)$. Let $\nu\geq 1$ and assume that the span of $\mc{B}_\nu$ is dense in $\H(E^\nu)$. It follows from \cite[Lemma 4.1]{Bar} that if $E_a$ and $E_b$ are two Hermite-Biehler functions then
$$
\H(E_aE_b) = E^*_a\H(E_b)\oplus E_b\H(E_a)
$$ 
where the sum is orthogonal. This implies that
\est{
\H(E^{\nu+1}) = A \H(E^\nu) \oplus B \H(E^\nu),
}
where the sum is direct but possibly non-orthogonal. Thus, by induction assumption the span of the collection $\mc{C} = A\mc{B}_\nu \cup B \mc{B}_\nu$ is dense in $\H(E^{\nu+1})$. Evidently $B\mc{B}_\nu$ is a subset of $\mc{B}_{\nu+1}$. It remains to show that the collection $A\mc{B}_\nu$ can be arbitrarily approximated in the norm of $\H(E^{\nu+1})$ by elements of the span of $\mc{B}_{\nu+1}$.  We start by showing that $AB_{\nu,\nu}$ is contained in the closure of the span of $\B_{\nu+1}$ in $\H(E^{\nu+1})$.

The identity 
\begin{align*}
A(z) B_{\nu,\nu}(z,t) =\frac{A(t)}{B'(t)}[ B_{\nu+1,\nu+1}(z,t) - B(z) C(z,t)]
\end{align*}
holds for $t\in\mc{T}_B$, where
\begin{align*}
C(z,t) &= B_{\nu-1,\nu-1}(z,t) \frac{A(t) B(z) - B'(t)(z-t) A(z)}{A(t)(z-t)^2}
\end{align*}
is an element of $ \H(E^\nu)$. The induction assumption in conjunction with inequality  $\|BG\|_{E^{\nu+1}}\le \|G\|_{E^\nu}$ for $G\in \mc{H}(E^\nu)$  may be used to show that $BC$ is contained in the span of $B \mc{B}_{\nu} \subseteq \mc{B}_{\nu+1}$ in $\mc{H}(E^{\nu+1})$. It follows that $AB_{\nu,\nu}$ is contained in the closure of the span of $\mc{B}_{\nu+1}$ in $\mc{H}(E^{\nu+1})$. If $1\leq j\leq \nu-1$ then evidently $A(z) B_{\nu,j}(z,t) = B(z) H(z)$ for some $H\in \H(E^\nu)$ and the same argument may be applied. This proves the first part of the lemma.

Now, by \eqref{1/B-P-def} and \eqref{intro-Gdef} we deduce that
\es{\label{G-B-relation}
G_{\nu,j}(z,t) &= \frac{1}{j!} \sum_{n=0}^{\nu-j-1} a_{\nu,n}(t) B_{\nu,\nu-j-n}(z,t)\\
&= \frac{1}{j!} \sum_{n=1}^{\nu-j} a_{\nu,\nu-j-n}(t) B_{\nu,n}(z,t).
}
Suppressing the arguments $t$ and $z$, this is in matrix notation
\es{\label{matrix-eq}
\left[\begin{matrix}
G_{\nu,0} \\  G_{\nu,1} \\ \vdots \\ G_{\nu,\nu-1}
\end{matrix}\right]
=
\left(\begin{matrix}
\frac{a_{\nu,\nu-1}}{0!} & \hdots & \frac{a_{\nu,1}}{0!}  & \frac{a_{\nu,0}}{0!} \\
\frac{a_{\nu,\nu-2}}{1!} & \hdots & \frac{a_{\nu,0}}{1!}  &  0 \\
\vdots & & 0 & 0 \\
\frac{a_{\nu,0} }{(\nu-1)!} & \hdots & 0 & 0 
\end{matrix}\right)
\left[\begin{matrix}
B_{\nu,1} \\  B_{\nu,2} \\ \vdots \\ B_{\nu,\nu}
\end{matrix}\right].
}
Since $a_{\nu,0}(t) = 1/B'(t)^\nu\neq 0$, it follows that the above matrix is invertible and, in particular, any element of $\mc{B}_\nu$ is a linear combination of elements from $\mc{G}_\nu$ and vice versa, which finishes the proof.
\end{proof}


\begin{lemma} Let $\nu\ge 2$ be an integer. Assume that differentiation defines a bounded operator on $\mc{H}(E^\nu)$ with norm $\mc{D}$. Then:
\begin{enumerate}
\item The zeros of $B$ are separated, that is, we have 
\begin{align}\label{zero-separation}
|t-s|\ge \frac{\pi}{\D\sqrt{\nu}},
\end{align}
for all $s,t\in\Tau_B$;

\item For all real $x$ and $t\in \mc{T}_B$ we have
\begin{align}\label{Bt-bound}
\left|\frac{B(x)}{E(x)(x-t)}\right|\leq \D\sqrt{\nu}.
\end{align}

\end{enumerate}
\end{lemma}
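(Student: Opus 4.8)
The plan is to reduce both parts to the upper phase bound $\varphi'(x)\le \D\sqrt\nu$ furnished by Proposition \ref{upper-phase-bound}.

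For item (1), I would use that the real zeros of $B$ are precisely the points where $\varphi\equiv 0 \bmod \pi$, and that $\varphi$ is strictly increasing because $\varphi'>0$. Consequently any two distinct zeros $s,t$ of $B$ have phase values differing by a nonzero integer multiple of $\pi$, so $|\varphi(t)-\varphi(s)|\ge \pi$. Since $\varphi$ is analytic near $\R$, the mean value theorem gives $|\varphi(t)-\varphi(s)|=\varphi'(\xi)\,|t-s|$ for some $\xi$ between $s$ and $t$, and inserting $\varphi'(\xi)\le \D\sqrt\nu$ immediately yields $|t-s|\ge \pi/(\D\sqrt\nu)$.

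For item (2), the key is to recognize $B(z)/(z-t)$ as a rescaled reproducing kernel. Putting $w=t$ in the identity $\pi(z-\ov w)K(w,z)=B(z)A(\ov w)-A(z)B(\ov w)$ and using $B(t)=0$ gives $K(t,z)=A(t)B(z)/(\pi(z-t))$, hence $B(z)/(z-t)=(\pi/A(t))K(t,z)$; here $A(t)\ne 0$ because $E$, and therefore $|E(t)|^2=A(t)^2+B(t)^2$, has no real zeros under the hypothesis. I would then apply the reproducing-kernel Cauchy-Schwarz inequality $|K(t,x)|^2\le K(x,x)K(t,t)$ and rewrite each factor through the phase: $K(x,x)/|E(x)|^2=\varphi'(x)/\pi$ from \eqref{phi-id}, and $\pi K(t,t)=A(t)B'(t)$ from \eqref{Intro_Def_K_AB} with $B(t)=0$. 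After the constants cancel this collapses to
\[
\left|\frac{B(x)}{E(x)(x-t)}\right|^2\le \varphi'(x)\,\frac{B'(t)}{A(t)}=\varphi'(x)\,\varphi'(t),
\]
the last step being \eqref{phi-id} in the form $\varphi'(t)=B'(t)/A(t)$ for $t\in\Tau_B$. Bounding both $\varphi'(x)$ and $\varphi'(t)$ by $\D\sqrt\nu$ via Proposition \ref{upper-phase-bound} gives the right-hand side $\le \D^2\nu$, which is the claim.

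The main content is bookkeeping rather than a genuine obstacle, since Proposition \ref{upper-phase-bound} does the analytic work. The one step demanding care is the algebraic identification in item (2): seeing that $B(z)/(z-t)$ is a multiple of $K(t,z)$ is what turns an apparently pointwise estimate into a clean consequence of Cauchy-Schwarz, and one must track every factor of $\pi$, $A(t)$, and $|E(x)|$ so that they telescope exactly into the product $\varphi'(x)\varphi'(t)$.
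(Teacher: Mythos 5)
Your proposal is correct and follows essentially the same route as the paper: item (1) via the Lipschitz bound $|\varphi(s)-\varphi(t)|\le \D\sqrt{\nu}\,|s-t|$ combined with the phase jump of $\pi$ between zeros of $B$, and item (2) via the identification $B(z)/(z-t)=(\pi/A(t))K(t,z)$, Cauchy--Schwarz for the kernel, and the identities $\pi K(t,t)=A(t)B'(t)$ and $\varphi'=\pi K(x,x)/|E(x)|^2$ yielding $\varphi'(x)\varphi'(t)\le \D^2\nu$. Your explicit remarks that $A(t)\neq 0$ (since $E$ has no real zeros under the hypothesis) and the mean-value-theorem phrasing of the separation step are minor elaborations of exactly what the paper does.
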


\begin{proof} The positivity of $\varphi'$ and the bound $\varphi'(x) \le D\sqrt{\nu}$ from \eqref{phase-upperbound} imply
\[
 |\p(s) - \p(t)| \leq \D\sqrt{\nu} |s-t|,
\]
for any $s,t\in\R$. If $s,t\in\Tau_B$ are consecutive elements, then the left side equals $\pi$. 

Inequality \eqref{CS-ineq} gives $|K(w,z)|^2\leq K(w,w)K(z,z)$ for all $w,z\in\C$. We obtain
\[
\bigg|\frac{B(x)}{E(x)(x-t)}\bigg|^2 = \pi^2 \frac{K(t,x)^2}{A(t)^2 |E(x)|^2}\leq \pi^2\frac{K(t,t)}{A(t)^2}\frac{K(x,x)}{|E(x)|^2}=\p'(t)\p'(x) \leq \D^2\nu
\]
which finishes the proof.
\end{proof}

\begin{lemma}\label{lemma-norm-mult-B}
Assume $A\notin \H(E)$. Then for any integer $\nu\geq 2$ and any $F\in\H(E)$ we have
\es{\label{norm-equiv-mult-B}
{\nu}^{-1/2}\|F\|_{E} \leq \|B^{\nu-1}F\|_{E^\nu} \leq \|F\|_{E}.
}
In particular, we have that
$$
\bigg\|\frac{B(z)^\nu}{(z-t)}\bigg\|^2_{E^\nu} \simeq \p'(t),
$$
for all $t\in\Tau_B$.
\end{lemma}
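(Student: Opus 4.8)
The plan is to compute $\|B^{\nu-1}F\|_{E^\nu}$ \emph{exactly} by diagonalizing multiplication by $B$ against the orthogonal decomposition of $\H(E^\nu)$ coming from \cite[Lemma 4.1]{Bar}, and then to reduce the stated two-sided bound to an elementary inequality for central binomial coefficients. First I would iterate the orthogonal splitting $\H(E_aE_b)=E_a^*\H(E_b)\oplus E_b\H(E_a)$ (already used in Lemma \ref{BGdense}) with $E_a=E^{m}$, $E_b=E$ to obtain, by induction on $\nu$, the orthogonal decomposition
\[
\H(E^\nu)=\bigoplus_{k=0}^{\nu-1}(E^*)^{\nu-1-k}E^{k}\,\H(E).
\]
The key point is that for fixed $k$ the map $G\mapsto (E^*)^{\nu-1-k}E^{k}G$ is an isometry from $\H(E)$ into $\H(E^\nu)$, since $|E^*(x)|=|E(x)|$ on $\R$ forces $\|(E^*)^{\nu-1-k}E^{k}G\|_{E^\nu}=\|G\|_E$; moreover multiplication by $E$ preserves inner products from $\H(E^{m})$ into $\H(E^{m+1})$, so the summands above are genuinely mutually orthogonal.

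Next I would expand $B^{\nu-1}=\big(\tfrac{i}{2}\big)^{\nu-1}(E-E^*)^{\nu-1}$ by the binomial theorem. Each resulting term $(E^*)^{\nu-1-k}E^{k}F$ lands in a distinct summand of the decomposition above, so Pythagoras together with the Vandermonde identity $\sum_{k}\binom{\nu-1}{k}^2=\binom{2\nu-2}{\nu-1}$ yields the exact identity
\[
\|B^{\nu-1}F\|_{E^\nu}^2=4^{-(\nu-1)}\binom{2\nu-2}{\nu-1}\,\|F\|_E^2=:c_\nu\|F\|_E^2 .
\]

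The claimed inequalities then amount to $\tfrac1\nu\le c_\nu\le 1$. The upper bound is immediate from $\binom{2n}{n}\le 4^{n}$ (and in fact also follows directly, without the decomposition, from the pointwise estimate $B(x)^2/|E(x)|^2\le 1$). For the lower bound I would set $b_n=4^{-n}\binom{2n}{n}$, compute $b_{n+1}/b_n=(2n+1)/(2n+2)$, and check by the induced recursion that $a_n:=(n+1)b_n$ is nondecreasing with $a_0=1$; hence $b_{\nu-1}\ge 1/\nu$, that is $c_\nu\ge 1/\nu$, with equality at $\nu=2$.

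Finally, for the ``in particular'' statement I would identify $B(z)/(z-t)$ with a reproducing kernel: for $t\in\Tau_B$ one has $B(t)=0$ and $A(t)\neq 0$ (the zeros of $A$ and $B$ interlace), so $\pi(z-\overline w)K(w,z)=B(z)A(\overline w)-A(z)B(\overline w)$ gives $B(z)/(z-t)=\pi A(t)^{-1}K(t,z)$. Applying the identity above with $F=B(\cdot)/(\cdot-t)\in\H(E)$, and using $\|K(t,\cdot)\|_E^2=K(t,t)$ together with \eqref{phi-id} in the form $\p'(t)=\pi K(t,t)/|E(t)|^2=\pi K(t,t)/A(t)^2$ (since $|E(t)|^2=A(t)^2$ when $B(t)=0$), yields $\|B(z)^\nu/(z-t)\|_{E^\nu}^2=\pi c_\nu\,\p'(t)\simeq\p'(t)$, with implied constants depending only on $\nu$. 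The hypothesis $A\notin\H(E)$, with $E$ having no real zeros, is what keeps these reproducing-kernel objects and the summands above non-degenerate. I expect the main obstacle to be establishing the orthogonal decomposition cleanly and verifying that the binomial expansion of $B^{\nu-1}F$ respects it term by term; once that is in place the norm identity is automatic and only the elementary bound $c_\nu\ge 1/\nu$ remains.
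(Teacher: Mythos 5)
Your proof is correct, and it takes a genuinely different route from the paper. The paper gets the upper bound trivially from $|B(x)|\le |E(x)|$ on $\R$, and the lower bound by a sampling argument: since $A$ divides $B_\nu$ ($\nu$ even) or $A_\nu$ ($\nu$ odd), the zeros of $A$ form a subset of the nodes in the norm identity \eqref{norm-HE} for $\H(E^\nu)$, so Bessel's inequality plus the kernel identity $K_\nu(s,s)=\nu|E(s)|^{2\nu-2}K(s,s)$ gives $\|B^{\nu-1}F\|_{E^\nu}^2\ge \nu^{-1}\sum_{A(s)=0}|F(s)|^2/K(s,s)=\nu^{-1}\|F\|_E^2$ --- and it is precisely the last equality, \eqref{norm-HE} for $\H(E)$ at the zeros of $A$, that uses the hypothesis $A\notin\H(E)$. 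Your argument instead iterates Baranov's orthogonal splitting (the steps all check: multiplication by $E$ preserves inner products because the weights $|E|^{-2m}$ on $\R$ cancel, so the iterated decomposition $\H(E^\nu)=\bigoplus_{k=0}^{\nu-1}E^k(E^*)^{\nu-1-k}\H(E)$ is indeed orthogonal with isometric summands), and the binomial expansion of $B^{\nu-1}=(i/2)^{\nu-1}(E-E^*)^{\nu-1}$ then yields via Pythagoras and Vandermonde the \emph{exact} identity
\begin{equation*}
\|B^{\nu-1}F\|_{E^\nu}^2 \;=\; 4^{1-\nu}\binom{2\nu-2}{\nu-1}\,\|F\|_E^2,
\end{equation*}
which one can sanity-check in the Paley--Wiener case $E(z)=e^{-iz}$, $B=\sin$, e.g.\ $\int_\R \sin^4x/x^2\,\dx=\pi/2$. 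Your recursion for $a_n=(n+1)4^{-n}\binom{2n}{n}$ is also correct ($a_{n+1}/a_n=(2n^2+5n+2)/(2n^2+4n+2)\ge1$, $a_0=1$), so $c_\nu\ge 1/\nu$ with equality only at $\nu=2$. This buys you more than the paper proves: by Stirling $c_\nu\asymp \nu^{-1/2}$, so the paper's constant $\nu^{-1}$ is not sharp for $\nu\ge 3$; moreover your proof shows the hypothesis $A\notin\H(E)$ is superfluous for \eqref{norm-equiv-mult-B} --- your closing remark that it ``keeps the summands non-degenerate'' is a mischaracterization, since your argument never invokes it (it is only the paper's sampling route that needs it), while the ``in particular'' statement needs only $E(t)\neq 0$ so that $B(\cdot)/(\cdot-t)=\pi A(t)^{-1}K(t,\cdot)$ and $\|B(\cdot)/(\cdot-t)\|_E^2=\pi\p'(t)$, exactly as you derive. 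The trade-off: the paper's proof is a three-line computation once \eqref{norm-HE} is available, whereas yours requires setting up the iterated orthogonal decomposition, but rewards the effort with the exact constant and weaker hypotheses.
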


\begin{proof} The right hand side inequality in \eqref{norm-equiv-mult-B} follows from $|B(x)| \leq |E(x)|$ for real $x$.
For every $\nu\geq 2$ let $E(z)^\nu = A_\nu(z)-iB_\nu(z)$, where $A_\nu(z)$ and $B_\nu(z)$ are real entire functions. It is simple to see that when $\nu$ is even $B_\nu/A $ is entire and when $\nu$ is odd $A_\nu/ A$ is entire. In either case, the zeros of $A$ form a subset of the zeros that occur in \eqref{norm-HE} for $\mc{H}(E^\nu)$, and we obtain 
\est{
\|B^{\nu-1}F\|_{E^\nu}^2 & \geq \sum_{A(s)=0} \frac{|B(s)|^{2\nu-2}|F(s)|^2}{K_\nu(s,s)} = \frac{\pi}{\nu}\sum_{A(s)=0}\frac{|B(s)|^{2\nu-2}|F(s)|^2}{|A'(s)B(s)^{2\nu-1}|}
\\ & = \nu^{-1}\sum_{A(s)=0} \frac{|F(s)|^2}{K(s,s)} = \nu^{-1}\|F\|^2_E
}
with another application of \eqref{norm-HE}.
\end{proof}

\begin{lemma} \label{zero-inner-prod}
Let $\nu\geq 1$ be an integer. Then for all distinct $s, t\in \mc{T}_B$ we have
\[
\langle B_{\nu,1}(\cdot,s) , B_{\nu,1}(\cdot,t)\rangle_{E^\nu} =0.
\]
\end{lemma}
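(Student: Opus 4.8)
The plan is to reduce the inner product to a single real integral and evaluate it by contour integration after a phase substitution. Writing out the definition and using that $B$ is real on $\R$, one has
\[
\langle B_{\nu,1}(\cdot,s),B_{\nu,1}(\cdot,t)\rangle_{E^\nu}=\int_\R \frac{B(x)^{2\nu}}{(x-s)(x-t)\,|E(x)|^{2\nu}}\,\dx,
\]
an absolutely convergent integral, since $B(s)=B(t)=0$ removes the apparent poles at $s,t$ and $|B/E|\le 1$ on $\C^+$ forces an $O(x^{-2})$ tail. I would stress at the outset that the mechanism used for $\nu=1$ — namely that $B(z)/(z-t)$ is a constant multiple of the reproducing kernel $K(t,z)$, so that orthogonality follows from $K(s,t)=0$ — is \emph{not} available here: $B(z)^\nu/(z-t)$ is proportional to $K_\nu(t,z)$ only when $B^\nu=B_\nu$, which fails for $\nu\ge 2$. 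So the inner product must be computed by hand.

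The substitution I would use is the phase identity. From $E=A-iB$ one gets $B(x)/|E(x)|=\sin\varphi(x)$ and $E^*(x)/E(x)=e^{2i\varphi(x)}$ for real $x$, so the integrand equals $\sin^{2\nu}\varphi(x)/\big((x-s)(x-t)\big)$. Expanding by the binomial theorem,
\[
\sin^{2\nu}\varphi=\frac{(-1)^\nu}{4^\nu}\sum_{j=0}^{2\nu}(-1)^j\binom{2\nu}{j}\,e^{\,i(2\nu-2j)\varphi},
\]
and each exponential equals $(E^*/E)^{\nu-j}$ when $j<\nu$, the constant $1$ when $j=\nu$, and $(E/E^*)^{j-\nu}$ when $j>\nu$. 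Since $s,t\in\mc{T}_B$ force $\varphi(s),\varphi(t)\in\pi\Z$, every one of these exponentials takes the value $1$ at $x=s$ and at $x=t$; in particular the coefficients sum to $\sin^{2\nu}\varphi(s)=0$, which is just $\sum_j(-1)^j\binom{2\nu}{j}=(1-1)^{2\nu}=0$. This lets me subtract $1$ from each exponential and rewrite the integrand as a finite combination of terms $\big(e^{i(2\nu-2j)\varphi}-1\big)/\big((x-s)(x-t)\big)$ with $j\neq\nu$.

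Each such term is then killed by closing a contour. For $j<\nu$ set $u(z)=(E^*(z)/E(z))^{\nu-j}$: because the Hermite--Biehler function $E$ has no zeros in the closed upper half-plane, $u$ is analytic in a neighborhood of $\overline{\C^+}$ and satisfies $|u|\le 1$ there, while $u-1$ vanishes at both $s$ and $t$. Hence $(u(z)-1)/\big((z-s)(z-t)\big)$ is analytic across $\R$ and $O(z^{-2})$ at infinity, so integrating over $[-R,R]$ and closing by the upper semicircle (whose contribution is $O(R^{-1})$) gives integral $0$ over $\R$. For $j>\nu$ the analogous function $(E(z)/E^*(z))^{j-\nu}$ is analytic and bounded in $\overline{\{\im z<0\}}$, and one closes in the lower half-plane. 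Summing over $j$ yields $0$.

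The step I expect to require the most care is the regularization. The individual exponential pieces $e^{i(2\nu-2j)\varphi}/\big((x-s)(x-t)\big)$ are neither integrable at infinity nor free of poles at $s,t$, so the integral cannot be split termwise until the constant $1$ has been subtracted from each exponential; it is precisely the vanishing of the coefficient sum (equivalently $\varphi(s),\varphi(t)\in\pi\Z$) that licenses this and simultaneously produces the zeros at $s,t$ that make each regularized term analytic across the real axis. Once that is in place, verifying analyticity in the appropriate half-plane and the $O(z^{-2})$ decay on the semicircles is routine.
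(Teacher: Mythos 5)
Your proof is correct and follows essentially the same route as the paper's: both expand $B(x)^{2\nu}/|E(x)|^{2\nu}$ (your $\sin^{2\nu}\varphi$) by the binomial theorem into powers of $E/E^*$ and kill each term by closing a contour in the half-plane where that power is bounded, using $E(s)=E^*(s)$ at points of $\mc{T}_B$. The only difference is cosmetic: where the paper deforms the contour around $s,t$ with small semicircles and lets the residues cancel across $j$ (details left to the reader), you subtract the constant $1$ from each exponential up front --- licensed by $\sum_j(-1)^j\binom{2\nu}{j}=0$ --- so each term is already analytic across $\R$; this is a slightly tidier implementation of the same argument.
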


\begin{proof}
We define an entire function $I = I_{s,t}$ by
\[
I(z) = \frac{B(z)^{2\nu}}{(E(z)(E^*(z))^\nu(z-s)(z-t)}
\]
where $s$ and $t$ are two zeros of $B$. The decomposition
\begin{align*}
\frac{B(z)^{2\nu}}{(E(z)E^*(z))^\nu}  = \frac{i^\nu}{2^\nu} \sum_{j=0}^{2\nu} \binom{2\nu}{j}\frac{ E(z)^{j} E^*(z)^{2\nu-j}}{(E(z)E^*(z))^\nu} (-1)^{2\nu-j}\\
 =\frac{i^\nu}{2^\nu} \sum_{j=0}^{2\nu} \binom{2\nu}{j} \left( \frac{ E(z)}{E^*(z)} \right)^{j-\nu}(-1)^{2\nu-j}.
\end{align*}
suggests to consider $I_j$ for $j\in\{0,...,2\nu\}$ defined by
\[
I_j(z) =  \frac{1}{(z-s)(z-t)}\left( \frac{ E(z)}{E^*(z)} \right)^{j-\nu}.
\]

Evidently each $I_j$ is meromorphic with poles at $s$ and $t$, and additional poles in the upper half plane if $j>\nu$ and the lower half plane if $j<\nu$. This suggest to consider a contour $C_K$ consisting of a deformation of $[-K,K]$ by small semicircles avoiding $s$ and $t$, and closed by a large semicircle in the appropriate half-plane of radius $K$ and center at the origin. Since $E^*/E$ is bounded by $1$ in the upper half plane (and $E/E^*$ analogously in the lower half plane), a standard residue theorem argument and the identities $E(t) = E^*(t)$ for $t\in\mc{T}_B$  may be used to show
\[
\frac{1}{2\pi i} \int_{C_K} I_j(z) \dz =  0,
\]
the details are left to the reader.
\end{proof}

\begin{lemma} \label{est-inner-prod-norm-B}  Assume differentiation defines a bounded operator on $\H(E^\nu)$ with norm $\mc{D}$. Then the following statements hold:

\begin{enumerate}
\item For any distinct $s,t\in\mc{T}_B$ and $j= 2,...,\nu$ we have
\[
|\langle B_{\nu,j}(\cdot,s),B_{\nu,j}(\cdot,t) \rangle_{E^\nu}| \lesssim \frac{1}{(s-t)^2};
\]
\item For any $t\in \mc{T}_B$ and $j=1,...,\nu$  we have
\[
\|B_{\nu,j}(\cdot,t)\|_{E^\nu} \lesssim 1;
\]
\item Denote by $a_{\nu,j}(t)$ the coefficient of $(z-t)^j$ in the Taylor series expansion of $B_{\nu,\nu}(z,t)^{-1}$ about the point $z=t$. Assume also that $\delta=\inf_{t\in\Tau_B} \p'(t)$. Then
\begin{align*}
|a_{\nu,j}(t)|^2\lesssim \frac{1}{K_\nu(t,t)},
\end{align*}
for all $t\in\Tau_B$ and $j=0,...,\nu-1$.
\end{enumerate}
All the implied constants above depend only on $\nu, \D$ and $\delta$.
\end{lemma}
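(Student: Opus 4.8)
The plan is to prove the three parts in the order (2), (1), (3), as each uses the previous; throughout write $\psi_t(x) = B(x)/(E(x)(x-t))$, so that on the real axis $B_{\nu,j}(x,t)/E(x)^\nu = \psi_t(x)^{j}(B(x)/E(x))^{\nu-j}$, and recall the pointwise bounds $|\psi_t(x)|\le \D\sqrt{\nu}$ from \eqref{Bt-bound}, $|B(x)/E(x)|\le 1$ for real $x$, and $\varphi'(x)\le \D\sqrt{\nu}$ from \eqref{phase-upperbound}. For part (2) I would expand $\|B_{\nu,j}(\cdot,t)\|_{E^\nu}^2 = \int_\R |\psi_t(x)|^{2j}\,|B(x)/E(x)|^{2(\nu-j)}\,\dx$ and bound all but one factor $|\psi_t|^2$ by $(\D\sqrt\nu)^{2(j-1)}$ while using $|B/E|\le 1$; the surviving integral $\int_\R |\psi_t(x)|^2\,\dx = \pi\varphi'(t)$ follows from $B(z)/(z-t)=(\pi/A(t))K(t,z)$ together with $\pi K(t,t)=A(t)B'(t)$, so that $\|B_{\nu,j}(\cdot,t)\|_{E^\nu}^2\le (\D^2\nu)^{j-1}\pi\varphi'(t)\lesssim 1$ since $\varphi'(t)\le\D\sqrt\nu$.

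For part (1) the decay in $|s-t|$ must be produced algebraically. I would start from $(x-s)^{-j}(x-t)^{-j}=(s-t)^{-j}\big((x-s)^{-1}-(x-t)^{-1}\big)^{j}$ and expand by the binomial theorem, obtaining $\langle B_{\nu,j}(\cdot,s),B_{\nu,j}(\cdot,t)\rangle_{E^\nu}=(s-t)^{-j}\sum_{k=0}^{j}\binom{j}{k}(-1)^{j-k}I_k$, where $I_k=\int_\R |B/E|^{2\nu}(x-s)^{-k}(x-t)^{-(j-k)}\,\dx$. For $0<k<j$ one has $I_k=\langle B_{\nu,k}(\cdot,s),B_{\nu,j-k}(\cdot,t)\rangle_{E^\nu}$, while the endpoint integrals $I_0,I_j$ equal either a squared norm (even power $j$) or an inner product $\langle B_{\nu,\ell}(\cdot,t),B_{\nu,\ell+1}(\cdot,t)\rangle_{E^\nu}$ of consecutive functions (odd power $j$); in every case part (2) and Cauchy--Schwarz give $|I_k|\lesssim 1$. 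Each integral is absolutely convergent because $|B/E|^{2\nu}$ vanishes to order $2\nu\ge j$ at both $s$ and $t$ and decays like the denominator at infinity with $j\ge 2$. Hence $|\langle\cdots\rangle|\lesssim |s-t|^{-j}$, and since $j\ge 2$ and the zeros are separated, $|s-t|\ge \pi/(\D\sqrt\nu)$ by \eqref{zero-separation}, this is $\lesssim |s-t|^{-2}$.

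For part (3), observe that $a_{\nu,j}(t)$ are the Taylor coefficients of the reciprocal of $B_{\nu,\nu}(z,t)=B(z)^\nu/(z-t)^\nu=\sum_{p\ge 0}d_p(z-t)^p$, where $d_p$ is the coefficient of $(z-t)^{\nu-1+p}$ in $B_{\nu,1}(z,t)$ and $d_0=B'(t)^\nu$. The crucial step is to bound the $d_p$: since $\H(E^\nu)$ is closed under differentiation with norm $\D$, iterating and combining with \eqref{CS-ineq} gives $|F^{(m)}(t)|\le \D^{m}\|F\|_{E^\nu}\sqrt{K_\nu(t,t)}$ for $F\in\H(E^\nu)$. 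Applying this to $F=B_{\nu,1}(\cdot,t)$, whose norm satisfies $\|B_{\nu,1}(\cdot,t)\|_{E^\nu}^2\le \pi\varphi'(t)$ by part (2), yields $|d_p|\lesssim \sqrt{\varphi'(t)\,K_\nu(t,t)}$. Using \eqref{K-nu-kernel-id} in the form $K_\nu(t,t)=\tfrac{\nu}{\pi}A(t)^{2\nu-1}B'(t)$ and $\varphi'(t)=B'(t)/A(t)$, the ratios obey $|d_p/d_0|\lesssim \varphi'(t)^{-(\nu-1)}\le \delta^{-(\nu-1)}\lesssim 1$, where the lower bound $\varphi'(t)\ge\delta$ is exactly what is needed. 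The reciprocal-series recurrence $a_{\nu,j}=-d_0^{-1}\sum_{i=1}^{j}d_i\,a_{\nu,j-i}$ then gives inductively $|a_{\nu,j}(t)|\lesssim |a_{\nu,0}(t)|=B'(t)^{-\nu}$, and since $|a_{\nu,0}(t)|^2=B'(t)^{-2\nu}=\tfrac{\nu}{\pi}\varphi'(t)^{-(2\nu-1)}K_\nu(t,t)^{-1}\le \tfrac{\nu}{\pi}\delta^{-(2\nu-1)}K_\nu(t,t)^{-1}$, the claimed bound $|a_{\nu,j}(t)|^2\lesssim K_\nu(t,t)^{-1}$ follows.

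I expect part (3) to be the main obstacle, since it is the only place where one must control the entire jet of $B$ at $t$ rather than a single norm or a pair of inner products; the decisive idea is to read off the Taylor coefficients $d_p$ from the concrete element $B_{\nu,1}(\cdot,t)$ of $\H(E^\nu)$ and estimate them through the boundedness of the differentiation operator, with the hypothesis $\varphi'\ge\delta$ entering precisely to keep the ratios $d_p/d_0$, and hence the full reciprocal series, uniformly bounded.
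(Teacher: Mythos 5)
Your proposal is correct, and it splits naturally against the paper's proof: parts (2) and (3) follow essentially the paper's own route, while part (1) is genuinely different. Your part (2) is the paper's argument: peel off all but one factor of $B/(E(\cdot)(\cdot-t))$ via \eqref{Bt-bound} and $|B/E|\le 1$ on $\R$, leaving the exact norm $\|B(\cdot)/(\cdot-t)\|_E^2=\pi\p'(t)\le \pi\D\sqrt{\nu}$. Your part (3) is structurally identical to the paper's: the paper bounds the Taylor coefficients $b_{\nu,p}(t)$ of $B_{\nu,\nu}(\cdot,t)$ at $z=t$ by $|b_{\nu,p}(t)|\le \tfrac{1}{p!}\D^{p}\|B_{\nu,\nu}(\cdot,t)\|_{E^\nu}K_\nu(t,t)^{1/2}$ (you read the same coefficients off $B_{\nu,1}(\cdot,t)$ with $\nu-1+p$ derivatives, which is equivalent), establishes the ratio bound $|b_{\nu,p}/b_{\nu,0}|\lesssim 1$ using $\delta\le\p'(t)\le\D\sqrt{\nu}$ together with \eqref{K-nu-kernel-id}, and inverts the power series through the triangular system $0=\sum_{i=0}^{\ell}a_{\nu,i}(t)b_{\nu,\ell-i}(t)$, which is exactly your reciprocal recurrence $a_{\nu,j}=-d_0^{-1}\sum_{i=1}^{j}d_i a_{\nu,j-i}$; your final identity $B'(t)^{-2\nu}=\tfrac{\nu}{\pi}\p'(t)^{-(2\nu-1)}K_\nu(t,t)^{-1}$ matches the paper's $|b_{\nu,0}(t)|^2\simeq K_\nu(t,t)$. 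The real divergence is part (1): the paper uses \eqref{Bt-bound} pointwise to reduce every case $2\le j\le \nu$ to the single inner product $\langle B_{2,2}(\cdot,s),B_{2,2}(\cdot,t)\rangle_{E^2}$, which it evaluates exactly through the sampling identity \eqref{norm-HE} as $\pi\bigl(\p'(s)+\p'(t)\bigr)(s-t)^{-2}$; you instead expand $(x-s)^{-j}(x-t)^{-j}=(s-t)^{-j}\bigl((x-s)^{-1}-(x-t)^{-1}\bigr)^{j}$ binomially and bound each $I_k$ by Cauchy--Schwarz and part (2). Both are sound --- your $I_k$ are absolutely convergent simply because each factor $B_{\nu,k}(\cdot,s)/E^\nu$, $B_{\nu,j-k}(\cdot,t)/E^\nu$ lies in $L^2(\R)$, which is a cleaner justification than the decay heuristic you give --- and the trade-off is clear: the paper's reduction yields an exact constant and never needs the separation of $\Tau_B$, whereas your expansion yields the stronger off-diagonal decay $|s-t|^{-j}$ (which could matter if one wanted faster decay for larger $j$), at the price of invoking \eqref{zero-separation} to descend from $|s-t|^{-j}$ back to the claimed $(s-t)^{-2}$.
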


\begin{proof} 
{ Item} (1). Using the fact that $|B(x)|\leq |E(x)|$ for all real $x$ we deduce that
\est{
\langle B_{2,2}(z, s),B_{2,2}(z, t)\rangle_{E^{2}} \leq \bigg\| \frac{B(z)}{(z-t)(z-s)}\bigg\|_{E}^2 & = \pi\bigg(\frac{\p'(t)+\p'(s)}{(t-s)^2}\bigg)\\ 
& \lesssim (t-s)^{-2},
}
where the identity above is due to formula \eqref{norm-HE} and the last inequality due to \eqref{phase-upperbound}. Now, for any $j=2,\ldots,\nu$ we have the following sequence of estimates
\est{
\bigg|\frac{B_{\nu,j}(x, t)B_{\nu,j}(x, s)}{|E(x)|^{2\nu}}\bigg| & =\frac{B(x)^{2\nu}}{|x-t|^j|x-s|^j|E(x)|^{2\nu}} \\
& \leq \frac{B(x)^{2j}}{|x-t|^j|x-s|^j|E(x)|^{2j}} \\
& \lesssim \frac{B(x)^{4}}{|x-t|^2|x-s|^2|E(x)|^{4}},
}
where the last inequality above is due to \eqref{Bt-bound}. We conclude that
\est{
|\langle B_{\nu,j}(z, s),B_{\nu,j}(z, t)\rangle_{E^{\nu}}| \lesssim \langle B_{2,2}(z, s),B_{2,2}(z, t)\rangle_{E^{2}} \lesssim (t-s)^{-2}.
}

{ Item} (2). We apply  \eqref{Bt-bound} and $|B/E|\le 1$ on $\R$ to deduce that
\[
\left|\frac{B(x)^\nu}{(x-t)^{j} E(x)^{\nu} }\right|  \lesssim \left|\frac{B(x)}{ (x-t) E(x)}\right|,
\]
for all real $x$ and $j=1,\ldots,\nu$. It follows that
\[
\| B_{\nu,j}(z, t)\|^2_{E^{\nu}} \lesssim \bigg\|\frac{B(z)}{z-t}\bigg\|^2_E =\pi\p'(t) \leq \pi \D\sqrt{\nu}.
\]

{ Item} (3). Denote by $b_{\nu,j}(t)$ the coefficients of the power series expansion of $B_{\nu,\nu}(z, t)$ as a function of $z$ about $z=t$. The assumption that $\varphi'(t)\geq \delta$ whenever $B(t)=0$ in conjunction with identity \eqref{K-nu-kernel-id} and estimate \eqref{phase-upperbound} implies that
$$
|b_{\nu,0}(t)|^2 = |B'(t)^\nu|^2 \simeq K_\nu(t, t).
$$
Also, for $j=1,\ldots,\nu$ we have
\begin{align*}
|b_{\nu,j}(t)|^2= \frac{1}{(j!)^2}|B_{\nu,\nu}^{(j)}(t, t)|^2 \leq  \frac{1}{(j!)^2}\| B_{\nu,\nu}^{(j)}(\cdot, t)\|^2_{E^\nu} K_\nu(t, t) \lesssim K_\nu(t, t).
\end{align*}
We obtain
\es{\label{ineq-10}
\frac{|b_{\nu,j}(t)|}{|b_{\nu,0}(t)|} \lesssim 1.
}

Now note that for $\ell=1,\ldots,\nu-1$
$$
0=\partial_z^\ell \bigg[\frac{B_{\nu,\nu}(z, t)}{B_{\nu,\nu}(z, t)}\bigg]_{z=t} = \ell!\sum_{j=0}^\ell a_{\nu,j}(t)b_{\nu,\ell-j}(t).
$$
Hence the relation between $a_{\nu,j}(t)$ and $b_{\nu,j}(t)$ is given by a triangular matrix with diagonal terms equal to $b_{\nu,0}(t)$. Using \eqref{ineq-10} we conclude that
$$
|a_{\nu,j}(t)|^2 \lesssim 1/K_\nu(t, t).
$$
This concludes the lemma.
\end{proof}

For the sake of completeness we state here a result about Hilbert-type inequalities \cite[Corollary 22]{CLV}.

\begin{lemma}
Let $\lambda_1, \lambda_2,\ldots, \lambda_N$ be real numbers such that $|\lambda_n-\lambda_m|\geq \sigma$ whenever $m \neq n$, for some $\sigma>0$. Let $a_1, a_2,\ldots, a_N$ be complex numbers. Then
\begin{align}\label{Hilbert-inequality}
-\frac{\pi^2}{6\sigma^2}\sum_{n=1}^N |a_n|^2 \leq \sum_{\stackrel{m,n=1}{m\neq n}}^N\frac{a_n\ov{a_m}}{(\lambda_n-\lambda_m)^2}\leq \frac{\pi^2}{3\sigma^2}\sum_{n=1}^N|a_n|^2.
\end{align}
The constants appearing in these inequalities are the best possible $($as $N\to\infty)$.
\end{lemma}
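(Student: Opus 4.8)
The plan is to recast each of the two inequalities as the positive semidefiniteness of an explicit quadratic form and then to verify positivity through Bochner's theorem, the separation hypothesis doing the essential work of confining everything to the values $x=0$ and $|x|\ge1$. First I would normalize $\sigma=1$: substituting $\lambda_n\mapsto\lambda_n/\sigma$ scales $(\lambda_n-\lambda_m)^{-2}$ by $\sigma^2$, so it suffices to treat sequences with $|\lambda_n-\lambda_m|\ge1$ for $m\ne n$ and to establish the bounds with the constants $-\pi^2/6$ and $\pi^2/3$.

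The reduction I have in mind is the following. For the upper bound I would seek a continuous, integrable, even, \emph{positive-definite} function $h\colon\R\to\R$ (that is, with $\widehat h\ge0$) such that
\[
h(0)=\frac{\pi^2}{3},\qquad h(x)=-\frac{1}{x^2}\ \text{ for } |x|\ge1.
\]
Given such an $h$, Bochner's theorem yields
\[
\sum_{m,n=1}^N a_n\,\ov{a_m}\,h(\lambda_n-\lambda_m)=\int_\R\Big|\sum_{n=1}^N a_n\,e^{2\pi i\lambda_n\xi}\Big|^2\,\widehat h(\xi)\,\d\xi\ \ge\ 0,
\]
and since $|\lambda_n-\lambda_m|\ge1$ for $m\ne n$, the left-hand side equals exactly $\tfrac{\pi^2}{3}\sum_n|a_n|^2-\sum_{m\ne n}a_n\ov{a_m}/(\lambda_n-\lambda_m)^2$; this is the desired estimate. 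The crucial point is that $h$ is left completely free on $(-1,1)$, so no conflict with the singularity of $1/x^2$ at the origin arises. The lower bound follows in the same way from an even positive-definite $g$ with $g(0)=\pi^2/6$ and $g(x)=1/x^2$ for $|x|\ge1$, applied to the form $\tfrac{\pi^2}{6}\sum|a_n|^2+\sum_{m\ne n}a_n\ov{a_m}/(\lambda_n-\lambda_m)^2$.

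The construction of $h$ and $g$, together with the verification that their Fourier transforms are nonnegative, is the step I expect to be the main obstacle; it is precisely the one-sided extremal approximation problem whose solution is recorded in the cited \cite[Corollary 22]{CLV}. Concretely, one must interpolate the prescribed values $\mp1/x^2$ on $|x|\ge1$ across $(-1,1)$ by an even function so that the resulting transform---an entire function of exponential type $2\pi$ added to the fixed transform of $\mp x^{-2}\mathbf 1_{|x|\ge1}$---is a nonnegative measure, and so that the free value at the origin is as small as the constraint permits. I would build these functions from the extremal bandlimited majorants and minorants of $1/x^2$ and check $\widehat h,\widehat g\ge0$ directly from their closed forms.

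Finally, to see that the constants are best possible as $N\to\infty$, I would specialize to $\lambda_n=n$ and test vectors $a_n\approx e^{2\pi i n\theta}$ (tapered to finite length), which turns the form into the Toeplitz form with symbol
\[
\sum_{k\ne0}\frac{e^{2\pi i k\theta}}{k^2}=2\pi^2\Big(\theta^2-\theta+\tfrac16\Big),\qquad 0\le\theta\le1.
\]
This symbol attains its maximum $\pi^2/3$ at $\theta=0$ and its minimum $-\pi^2/6$ at $\theta=\tfrac12$, so the two constants cannot be improved; the same extremal values pin down $h(0)=\pi^2/3$ and $g(0)=\pi^2/6$ in the construction above.
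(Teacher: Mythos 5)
The paper does not actually prove this lemma: it is stated ``for the sake of completeness'' and imported verbatim from \cite[Corollary 22]{CLV}, so there is no internal argument to compare yours against. Your outline faithfully reconstructs the standard method behind that citation. The scaling reduction to $\sigma=1$ is correct; the Bochner-type identity $\sum_{m,n}a_n\ov{a_m}\,h(\lambda_n-\lambda_m)=\int_\R \widehat h(\xi)\,\big|\sum_n a_ne^{2\pi i\lambda_n\xi}\big|^2\,\d\xi$ is legitimate for continuous integrable $h$ with $\widehat h\ge 0$ (then $\widehat h\in L^1$ since $h(0)=\int\widehat h$, and Fourier inversion applies); and the key observation that separation leaves $h$ completely free on $(-1,1)$ is exactly right. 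Your sharpness argument is complete and correct: for $\lambda_n=n$ the form is Toeplitz with symbol $\sum_{k\neq 0}e^{2\pi ik\theta}/k^2=2\pi^2\big(\theta^2-\theta+\tfrac16\big)$ on $[0,1]$, whose extreme values $\pi^2/3$ (at $\theta=0$) and $-\pi^2/6$ (at $\theta=\tfrac12$) show both that the constants cannot be improved and that your prescribed origin values $h(0)=\pi^2/3$, $g(0)=\pi^2/6$ are the least that any admissible interpolant could have.

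The gap is the step you yourself flag: the existence of $h$ and $g$, which is the entire analytic content of the lemma. As written, your appeal to \cite[Corollary 22]{CLV} for this step is circular---that corollary \emph{is} the inequality being proved, not the construction. What you actually need are the extremal-function theorems in the body of \cite{CLV} (Gaussian subordination applied to $x\mapsto x^{-2}$), or equivalently, via precisely the dual reformulation you sketch: writing $h=-x^{-2}\mathbf{1}_{\{|x|\ge 1\}}+\phi$ with $\supp\phi\subset[-1,1]$, the requirement $\widehat h\ge 0$ becomes a one-sided approximation problem by entire functions of exponential type $2\pi$ for a target differing from $-2\pi^2|\xi|$ by a type-$2\pi$ entire function, i.e., Beurling's classical minorant problem for $|\xi|$; minimizing $\phi(0)=\int_\R\widehat\phi$ then produces exactly $\pi^2/3$, and the analogous problem gives $\pi^2/6$ for $g$. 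Until that construction is carried out or attributed to the correct extremal theorems rather than to the corollary itself, the two inequalities are not established; everything surrounding this step in your proposal is sound.
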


The next lemma estimates the norm of the linear combination of elements from $\G_\nu$. This is one of the two inequalities needed to show that this collection is a (weighted) frame.

\begin{lemma}\label{frame-G} 
Assume $\varphi'(t)\ge \delta>0$ for all $t\in \mc{T}_B$, and assume differentiation defines a bounded operator on $\mc{H}(E^\nu)$ with norm $\mc{D}$. Let $c_j(t)\in \C$ for $t\in \mc{T}_B$ and $j\in\{0,...,\nu-1\}$ be such that 
\[
\sum_{t\in \mc{T}_B} \sum_{j=0}^{\nu-1} \frac{|c_j(t)|^2}{K_\nu(t,t)}<\infty.
\]
Then the series
$$
C(z)=\sum_{t\in \mc{T}_B} \sum_{j=0}^{\nu-1} c_j(t) G_{\nu,j}(z,t)
$$
converges in the norm of $\H(E^\nu)$ and we have
\begin{align*}
\| C\|^2_{\mc{H}(E^\nu)} \lesssim  \sum_{t\in \mc{T}_B} \sum_{j=0}^{\nu-1} \frac{|c_j(t)|^2}{K_\nu(t,t)},
\end{align*}
where the implies constant depends only on $\nu,\D$ and $\delta$.
\end{lemma}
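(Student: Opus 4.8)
The plan is to re-express the series in the system $\B_\nu$ rather than $\G_\nu$ and reduce everything to a Gram-matrix (Schur-type) estimate for the family $\{B_{\nu,n}(\cdot,t)\}$. First I would substitute the second identity in \eqref{G-B-relation} into the definition of $C$ and interchange the finite inner sums to obtain
\[
C(z) = \sum_{t\in\Tau_B}\sum_{n=1}^{\nu} d_n(t)\,B_{\nu,n}(z,t),\qquad d_n(t)=\sum_{j=0}^{\nu-n}\frac{c_j(t)}{j!}\,a_{\nu,\nu-j-n}(t).
\]
Every index $\nu-j-n$ occurring here lies in $\{0,\dots,\nu-1\}$, so item (3) of Lemma \ref{est-inner-prod-norm-B} gives $|a_{\nu,\nu-j-n}(t)|^2\lesssim 1/K_\nu(t,t)$, and Cauchy--Schwarz in the finite $j$-sum yields $|d_n(t)|^2\lesssim K_\nu(t,t)^{-1}\sum_{j=0}^{\nu-1}|c_j(t)|^2$. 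Summing over $n$ and $t$ gives
\[
\sum_{t\in\Tau_B}\sum_{n=1}^{\nu}|d_n(t)|^2 \lesssim \sum_{t\in\Tau_B}\sum_{j=0}^{\nu-1}\frac{|c_j(t)|^2}{K_\nu(t,t)}.
\]
Thus it suffices to prove the coefficient-free bound $\|C\|_{E^\nu}^2\lesssim\sum_{t,n}|d_n(t)|^2$ for finite sums; convergence of the series then follows by applying the same bound to tails to see that partial sums are Cauchy in $\H(E^\nu)$, the inequality passing to the limit by continuity of the norm.

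To prove $\|C\|_{E^\nu}^2\lesssim\sum_{t,n}|d_n(t)|^2$ I would split $C=C_1+C_{\ge2}$, separating the terms with $n=1$. For $C_1=\sum_t d_1(t)B_{\nu,1}(\cdot,t)$ the functions $B_{\nu,1}(\cdot,t)$ are mutually orthogonal by Lemma \ref{zero-inner-prod}, so $\|C_1\|_{E^\nu}^2=\sum_t|d_1(t)|^2\|B_{\nu,1}(\cdot,t)\|_{E^\nu}^2\lesssim\sum_t|d_1(t)|^2$ by item (2). For $C_{\ge2}$ I expand the norm and split the double sum into its diagonal ($s=t$) and off-diagonal ($s\neq t$) parts. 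The diagonal part is $\lesssim\sum_{t,n\ge2}|d_n(t)|^2$ by Cauchy--Schwarz together with the uniform bound $\|B_{\nu,n}(\cdot,t)\|_{E^\nu}\lesssim1$ of item (2).

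The off-diagonal part needs the decay estimate $|\langle B_{\nu,m}(\cdot,s),B_{\nu,n}(\cdot,t)\rangle_{E^\nu}|\lesssim(s-t)^{-2}$ for distinct $s,t\in\Tau_B$ and all $m,n\in\{2,\dots,\nu\}$. This is not literally item (1) (which treats only $m=n$), but it follows from the same reduction: factoring the pointwise integrand $\frac{|B(x)|^{2\nu}}{|x-s|^m|x-t|^n|E(x)|^{2\nu}}$ as $\frac{B(x)^4}{(x-s)^2(x-t)^2|E(x)|^4}$ multiplied by $\big(\tfrac{|B|}{|x-s|\,|E|}\big)^{m-2}\big(\tfrac{|B|}{|x-t|\,|E|}\big)^{n-2}\big(\tfrac{|B|}{|E|}\big)^{2\nu-m-n}$, and bounding each of the latter three factors by a constant via \eqref{Bt-bound} and $|B|\le|E|$ on $\R$ (legitimate since $m,n\le\nu$ keeps all three exponents nonnegative), thereby reducing to the already computed $\langle B_{2,2}(\cdot,s),B_{2,2}(\cdot,t)\rangle_{E^2}\lesssim(s-t)^{-2}$.

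With this decay the off-diagonal contribution is controlled by $\sum_{m,n\ge2}\sum_{s\neq t}\frac{|d_m(s)|\,|d_n(t)|}{(s-t)^2}$. Since the zeros of $B$ are uniformly separated by \eqref{zero-separation}, I can bound each fixed-$(m,n)$ double sum either through the Hilbert-type inequality \eqref{Hilbert-inequality}, or more crudely through $2|d_m(s)||d_n(t)|\le|d_m(s)|^2+|d_n(t)|^2$ combined with $\sum_{t\neq s}(s-t)^{-2}\lesssim1$ uniformly in $s$ (again by separation). Either route gives $\|C_{\ge2}\|_{E^\nu}^2\lesssim\sum_{t,n\ge2}|d_n(t)|^2$, and combining with the $C_1$ estimate via $\|C\|_{E^\nu}^2\le2\|C_1\|_{E^\nu}^2+2\|C_{\ge2}\|_{E^\nu}^2$ completes the bound. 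I expect the only genuine obstacle to be this off-diagonal step: isolating the $n=1$ component so that Lemma \ref{zero-inner-prod} eliminates precisely the inner products for which no $(s-t)^{-2}$ decay is available, and establishing the cross-index decay for $m,n\ge2$.
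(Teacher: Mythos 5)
Your proof is correct, and it follows the paper's strategy in outline --- convert from $\G_\nu$ to $\B_\nu$ via \eqref{G-B-relation}, control the coefficients $d_n(t)$ through item (3) of Lemma \ref{est-inner-prod-norm-B}, exploit the orthogonality of Lemma \ref{zero-inner-prod} for the simple-pole terms, and use quadratic decay plus the separation \eqref{zero-separation} for the rest --- but it diverges at exactly the delicate point, namely the cross terms in the expanded norm. The paper never meets mixed inner products $\langle B_{\nu,m}(\cdot,s),B_{\nu,n}(\cdot,t)\rangle_{E^\nu}$ with $m\neq n$: implicitly it first applies $\bigl\|\sum_{m=1}^{\nu}X_m\bigr\|^2\le \nu\sum_{m=1}^{\nu}\|X_m\|^2$ to the $\nu$ blocks $X_m=\sum_t d_m(t)B_{\nu,m}(\cdot,t)$, so that only equal-index inner products survive, item (1) of Lemma \ref{est-inner-prod-norm-B} applies verbatim, and the off-diagonal sums $\sum_{s\neq t}|d_m(s)||d_m(t)|/(s-t)^2$ are in exactly the bilinear form treated by the Hilbert-type inequality \eqref{Hilbert-inequality}. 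You instead split into just two blocks ($n=1$ versus $n\ge 2$), which forces you to establish a mixed-index decay estimate for $m,n\in\{2,\dots,\nu\}$; your factorization of the integrand with exponents $m-2$, $n-2$, $2\nu-m-n$, all nonnegative, each factor bounded via \eqref{Bt-bound} and $|B|\le|E|$ on $\R$, is a legitimate extension of the paper's proof of item (1), and your isolation of $n=1$ correctly removes precisely those cross terms (simple pole against higher pole) for which no $(s-t)^{-2}$ decay is available. Your fallback bound $2|d_m(s)||d_n(t)|\le |d_m(s)|^2+|d_n(t)|^2$ combined with $\sum_{t\neq s}(s-t)^{-2}\lesssim 1$ (valid by \eqref{zero-separation}) is also sound, and more elementary than invoking \eqref{Hilbert-inequality} --- indeed for $m\neq n$ the Hilbert-type inequality does not apply verbatim since it concerns a single sequence, so this cruder route is the right one for your decomposition. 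In short: the paper's per-$m$ grouping buys a proof that quotes its lemmas unchanged at the cost of a factor $\nu$; your grouping requires a (correctly supplied) strengthening of item (1) to mixed indices but makes explicit the cross-term structure that the paper's terse display leaves to the reader.
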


\begin{proof} 
Define
\[
d_m(t) =  \sum_{j=0}^{\nu-m}  a_{\nu,\nu-m-j}(t) \frac{c_j(t)}{j!}.
\]
By item $(3)$ of Lemma \ref{est-inner-prod-norm-B} we trivially obtain
\es{\label{ineq-13}
\sum_{t\in\Tau_B}\sum_{m=1}^{\nu}|d_m(t)|^2 \lesssim \sum_{t\in\Tau_B}\sum_{j=0}^{\nu-1}\frac{|c_j(t)|^2}{K_\nu(t,t)} <\infty.
}
Let $S\subset \Tau_B$ be any finite set. Using \eqref{G-B-relation} we obtain
\begin{align*}
\Big\|\sum_{t\in S}  \sum_{j=0}^{\nu-1} c_j(t) G_{\nu,j}(z,t)\Big\|_{E^\nu}^2 & = \Big\|\sum_{t\in S}  \sum_{j=0}^{\nu-1} \frac{c_j(t)}{j!} \sum_{m=1}^{\nu-j} a_{\nu,\nu-m-j}(t) B_{\nu,m}(z,t)\Big\|_{E^\nu}^2 \\
&= \Big\|\sum_{t\in S}  \sum_{m=1}^{\nu} d_m(t)  B_{\nu,m}(z,t)\Big\|_{E^\nu}^2\\
&\lesssim \sum_{m=1}^{\nu} \sum_{t\in S} |d_m(t)|^2 +  \sum_{m=2}^{\nu}\sum_{\substack{s,t\in S \\ s\neq t }} \frac{|d_m(t)||d_m(s)|}{(t-s)^2} \\
&\lesssim  \sum_{m=1}^{\nu}  \sum_{t\in S} |d_m(t)|^2 \\
& \lesssim  \sum_{t\in S} \sum_{j=0}^{\nu-1} \frac{|c_j(t)|^2}{K_\nu(t,t)},
\end{align*}
where the first inequality is due to Lemmas \ref{zero-inner-prod} and \ref{est-inner-prod-norm-B}, and the third inequality follows from \eqref{ineq-13}. The second term on the right hand side of the third line in the above calculation is in the form of a Hilbert-Type inequality. By \eqref{zero-separation} the points $\Tau_B$ are uniformly separated, hence the second inequality above follows by \eqref{Hilbert-inequality}. 
Since the constants do not depend on the subset $S$, the lemma follows.
\end{proof}


\subsection{Proof of Theorem \ref{Thm2}}
$\nonumber$

We aim to show first that for $F\in\H(E^\nu)$ the series in \eqref{intro-Gdual} converges to $F$ in $\mc{H}(E^\nu)$. Since $E'/E \in H^\infty(\C^+)$, we obtain from Theorem \ref{Thm1} that  $F^{(j)} \in \mc{H}(E^\nu)$ for every $j\geq0$. Since $\Tau_{B}\subset \Tau_{B_\nu}$, 
we may apply \eqref{norm-HE} in $\mc{H}(E^\nu)$ to $F^{(j)}$ to obtain
\begin{align}\label{ineq-14}
\sum_{j=0}^{\nu-1}\sum_{t\in \Tau_B} \frac{|F^{(j)}(t)|^2}{K_\nu(t,t)} \lesssim \|F\|^2_{E^\nu}.
\end{align}

Thus, we may apply Lemma \ref{frame-G} to deduce that the interpolation formula \eqref{intro-Gdual} converges in $\mc{H}(E^\nu)$ to a function $F_0$. We claim that $F_0=F$. By Lemma \ref{BGdense}, the span of $\G_\nu$ is dense in $\H(E^\nu)$ and by property \eqref{G-delta-property} any function $H$ in the span of $\G_\nu$ satisfies interpolation formula \eqref{intro-Gdual}. Evidently $F_0$ is in the closure of the span of $\mc{G}_\nu$, hence we have
\[
\|H-F_0\|^2_{E^\nu} \lesssim  \sum_{j=0}^{\nu-1}\sum_{t\in \Tau_B} \frac{|H^{(j)}(t)-F^{(j)}(t)|^2}{K_\nu(t,t)},
\]
for all $H\in \G_\nu$.
An application of \eqref{ineq-14} bounds the sum on the right in terms of $\|H - F\|_{E^\nu}$. Adding and subtracting suitable $H$ in $\|F - F_0\|_{E^\nu}$ shows that this norm equals zero, that is, $F = F_0$.

 We prove next relation \eqref{intro-Gframe}. Define 
\[
\Delta_{\nu,j,t}(z) = K_\nu(t,t)^{-\frac12} D_{\nu,j}(z,t). 
\]

Equation \eqref{intro-Dframe} implies that $\{ \Delta_{\nu,j,t}: j=0,...,\nu-1; t\in\mc{T}_B\}$ is an (unweighted) frame for $\H(E^\nu)$. Consider the frame operator $U:\H(E^\nu)\to \H(E^\nu)$ defined by
\[
UF(z) = \sum_{t\in\mc{T}_B} \sum_{j=0}^{\nu-1} \langle F, \Delta_{\nu,j,t}\rangle_{\H(E^\nu)} \Delta_{\nu,j,t}(z).
\]

It is a basic result of frame theory  (see \cite[Corollary 5.1.3]{G}) that $U$ is invertible and positive, and that the collection $\{ U^{-1} \Delta_{\nu,j,t} :  j=0,...,\nu-1;t\in \mc{T}_B\}$ is also a frame, sometimes called the canonical dual frame. It follows immediately that
\[
U G_{\nu,j} = K_\nu(t,t)^{-\frac12} \Delta_{\nu,j,t}(z),
\]
which implies that $\{ K_\nu(t,t)^{\frac12}G_{\nu,j}(.,t) : j=0,...,\nu-1; t\in\mc{T}_B\}$ is a dual frame of $\mc{\D}_\nu$. This implies \eqref{intro-Gframe}. We remark that since for every fixed $t\in \mc{T}_B$ the functions $G_{\nu,j}(z,t)$ and $B_{\nu,j}(z,t)$ are connected via an invertible matrix transformation, the inequalities can also be shown from the bounds for $\mc{B}_\nu$ established in Lemma \ref{est-inner-prod-norm-B}. Finally, the fact that the relations \eqref{intro-Dframe} and \eqref{intro-Gframe} fail to hold if a term is removed  is a direct consequence of property \eqref{G-delta-property} and relation \eqref{matrix-eq}. The proof of Theorem \ref{Thm2} is complete.

\subsection{Proof of Theorem \ref{Thm3}}

Let $\nu\ge 2$. We assume that $E'/E\in H^\infty(\C^+)$ and
\begin{align}\label{lower-Dframe}
\|F\|_{E^\nu} \lesssim \sum_{t\in\mc{T}_B} \sum_{j=0}^{\nu-1} \frac{|F^{(j)}(t)|^2}{K_\nu(t,t)}
\end{align}
for all $F\in \mc{H}(E^\nu)$.   Using Corollary \ref{ABcorollary} in conjunction with Lemma \ref{lemma-norm-mult-B} we deduce that
$$
\bigg\|\frac{B(z)^\nu}{z-t}\bigg\|_{E^\nu}^2 \simeq \p'(t),
$$
for any $t\in\Tau_B$. From \eqref{lower-Dframe} and identity \eqref{K-nu-kernel-id} we deduce that
$$
\bigg\|\frac{B(z)^\nu}{(z-t)}\bigg\|_{E^\nu}^2 \lesssim \frac{B'(t)^{2\nu}((\nu-1)!)^2}{K_\nu(t,t)} = \frac{\pi((\nu-1)!)^2}{\nu} \p'(t)^{2\nu-1}.
$$

Since the implicit constants are independent of $t$, division by $\varphi'(t)$ gives  $1\lesssim \varphi'(t)$ for all $t\in \mc{T}_B$.


\section{The \texorpdfstring{$L^p$}{Lp} case}\label{Lp-case}

De Branges spaces are closely related to Hardy spaces in the upper half-plane $\C^+$. For a given $p\in[1,\infty]$, the Hardy space $H^p(\C^+)$ is defined as the space of holomorphic functions $F:\C^+\to\C$ such that
\est{ 
\sup_{y>0} \| F(x+iy)\|_{L^p}<\infty,
}
where $\|\cdot\|_{L^p}$ denotes the standard $L^p$-norm in the variable $x$-variable. In this situation, the limit $F(x)=\lim_{y\to 0} F(x+iy)$ exists in the $L^p$-sense and 
\est{\label{norm-Hp}
\sup_{y>0} \| F(x+iy)\|_{L^p} = \|F(x)\|_{L^p}.
}
This space endowed with the norm \eqref{norm-Hp} defines a Banach space of holomorphic functions on the upper half-plane.

Given a Hermite-Biehler function $E$ one can prove that an entire function $F$ belongs to $\H(E)$ if and only if $F/E$ and $F^*/E$ belong to the space $H^2(\C^+)$. This equivalent definition allows us to define de Branges spaces for any given exponent $p\in[1,\infty]$. The $L^p$ de Branges space $\H^p(E)$ is defined as the space of entire functions $F$ such that $F/E$ and $F/E^*$ belong to $H^p(\C^+)$ (hence $\H^2(E)=\H(E)$). Using the fact that $H^p(\C^+)$ is a Banach space one can easily prove that $\H^p(E)$ is  a Banach space of entire functions with norm given by
$$
\|F\|_{E,p} = \bigg(\int_\R \bigg|\frac{F(x)}{E(x)}\bigg|^p\dx\bigg)^{1/p}
$$
for finite $p$, or
$$
\|F\|_{E,\infty} = \sup_{x\in\R}\bigg|\frac{F(x)}{E(x)}\bigg| 
$$
for $p=\infty$. For all these facts see \cite[Section 3]{Gon} and \cite{Bar,Bar2}.

Evidently $\| K(w,.)\|_{E,q}<\infty$ for every $1<q\le \infty$ and $w\in\C$. It follows that these spaces have a reproducing kernel structure given by the Cauchy formula for functions of Hardy spaces (see \cite[Theorems 13.2 and 13.5]{Ma}), namely if $p\in [1,\infty)$ and $F\in\H^p(E)$ then
\es{\label{rep-ker-Lp}
F(w)=\int_\R \frac{F(x)\ov{K(w,x)}}{|E(x)|^2}\dx
}
for every $w\in\C$, where $K(w,z)$ is defined in \eqref{Intro_Def_K_E}. Using H\"older's inequality we obtain an important estimate
\es{\label{holder-Lp}
|F(w)|\leq \|F\|_{E,p}\|K(w,\cdot)\|_{E,p'},
}
where $p'$ is the conjugate exponent of $p$.  Using the known fact that the space  $H^{p'}(\C^+)$ can be identified with the dual space of $H^p(\C^+)$ for $p\in(1,\infty)$ one can deduce that $\H^p(E)'=\H^{p'}(E)$ if $p\in(1,\infty)$, that is, if $\Lambda$ is a functional over $\H^p(E)$ then there exists a function $\Lambda\in\H^{p'}(E)$ such that
$$
\langle \Lambda,\, F\rangle = \int_\R \frac{F(x)\ov{\Lambda(x)}}{|E(x)|^2}\dx,
$$
for all $F\in\H^p(E)$. The proof of this duality result deals with model spaces which diverges from the purposes of this article. For the interested reader we refer to \cite[Proposition 1.1]{Bar2} and \cite[Lemma 4.2]{Co}.

It is the goal of this section to prove that the interpolation series of $F\in \mc{H}^p(E^\nu)$ represents $F$. We conjecture that convergence of the series takes place in $\mc{H}^p(E^\nu)$ as well, but we do not have a proof of this statement.

\begin{theorem}\label{Lp-recont-thm}
Let $\nu\geq 2$ be an integer and let $1\le p<\infty$. Let $E$ be a Hermite-Biehler function with $E'/E\in H^\infty(C^+)$, and assume that there exists $\delta>0$ so that $\varphi'(t)\ge \delta$ for all $t\in\Tau_B$. Then
\es{\label{Lp-recont-form}
F(z) = \sum_{t\in \mc{T}_B} \sum_{j=0}^{\nu-1} F^{(j)}(t) G_{\nu,j}(z,t)
}
where the formula converges uniformly in compact subsets of $\C$.
\end{theorem}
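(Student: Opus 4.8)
The plan is to prove \eqref{Lp-recont-form} by continuity and density. I would view the right-hand side as a linear operator $T$ sending $F$ to the entire function $TF(z)=\sum_{t\in\mc{T}_B}\sum_{j=0}^{\nu-1}F^{(j)}(t)G_{\nu,j}(z,t)$, show that $T$ is continuous from $\H^p(E^\nu)$ into the space of entire functions with the topology of uniform convergence on compact sets, verify $TF=F$ on a dense subclass, and extend by continuity. As a preliminary, since $(E^\nu)'/E^\nu=\nu E'/E\in H^\infty(\C^+)$, the $L^p$ form of Baranov's inequality \cite{Bar} gives $\|F^{(j)}\|_{E^\nu,p}\lesssim\|F\|_{E^\nu,p}$, so $F^{(j)}\in\H^p(E^\nu)$ for $0\le j\le\nu-1$. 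The hypothesis $E'/E\in H^\infty(\C^+)$ also makes differentiation bounded on $\H(E^\nu)$ by Theorem \ref{Thm1}, so every $L^2$ estimate of Section \ref{derivative-frames} applies; in particular $\mc{T}_B$ is separated by \eqref{zero-separation}, $\varphi'$ is bounded above by Proposition \ref{upper-phase-bound}, and together with $\varphi'\ge\delta$ this yields $K_\nu(t,t)\simeq|E(t)|^{2\nu}$ via \eqref{K-nu-kernel-id}.

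Convergence then rests on two estimates. First, a pointwise bound for the interpolating functions: expressing $G_{\nu,j}$ through the $B_{\nu,m}$ as in \eqref{G-B-relation} and inserting the coefficient bound $|a_{\nu,n}(t)|^2\lesssim K_\nu(t,t)^{-1}\simeq|E(t)|^{-2\nu}$ from Lemma \ref{est-inner-prod-norm-B}, I would show that for $z$ in a fixed compact set $\mc{K}$ and $|t|$ large,
\[
|G_{\nu,j}(z,t)|\lesssim_{\mc{K}}\frac{|E(t)|^{-\nu}}{|z-t|},
\]
using $|B(z)|\lesssim_{\mc{K}}1$ and $|z-t|^{-m}\le|z-t|^{-1}$ for $m\ge1$. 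Second, a Plancherel--P\'olya sampling inequality for the separated set $\mc{T}_B$,
\[
\sum_{t\in\mc{T}_B}\frac{|F^{(j)}(t)|^p}{|E(t)|^{p\nu}}\lesssim\|F^{(j)}\|_{E^\nu,p}^p,
\]
which is a standard estimate in $L^p$ de Branges spaces for separated sequences (it follows from \eqref{zero-separation}, the separation of the zeros of $E$ from $\R$ in \eqref{Ezero-lowerbound}, and a subharmonic mean-value argument for $|F^{(j)}/E^\nu|^p$ on the strip of holomorphy); see \cite{Bar2}.

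Combining these through H\"older's inequality --- with $\sum_{t\in\mc{T}_B}|z-t|^{-p'}$ bounded uniformly for $z\in\mc{K}$ when $1<p<\infty$, and the $\ell^1$ bound used directly when $p=1$ --- shows that the series defining $TF$ converges absolutely and uniformly on $\mc{K}$ (the finitely many terms with $t$ near $\mc{K}$ being controlled directly by \eqref{holder-Lp}) and that $\sup_{z\in\mc{K}}|TF(z)|\lesssim_{\mc{K}}\|F\|_{E^\nu,p}$. Hence $T$ is continuous into the entire functions with the uniform-on-compacts topology, as is the inclusion $\H^p(E^\nu)\hookrightarrow\{\text{entire functions}\}$ by the point-evaluation bound \eqref{holder-Lp}. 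On the subspace $\H^p(E^\nu)\cap\H(E^\nu)$ the identity $TF=F$ is precisely Theorem \ref{Thm2}(1), since there the series converges in $\H(E^\nu)$, hence uniformly on compacts, to $F$. If this subspace is dense in $\H^p(E^\nu)$, then $T$ and the inclusion agree on a dense set and therefore everywhere, which is \eqref{Lp-recont-form}.

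The main obstacle is this density. For $1<p<\infty$ it is immediate: the reproducing kernels $K_\nu(w,\cdot)$ lie in $\H(E^\nu)$, and since $K_\nu(w,\cdot)/E^\nu$ decays like $|x|^{-1}$ they also lie in $\H^p(E^\nu)$; their span is dense because, by the duality $\H^p(E^\nu)'=\H^{p'}(E^\nu)$ recorded above, a functional annihilating every $K_\nu(w,\cdot)$ corresponds to some $G\in\H^{p'}(E^\nu)$ with $G(w)=0$ for all $w$, forcing $G=0$. The endpoint $p=1$ is genuinely more delicate, since kernels decay too slowly to belong to $\H^1(E^\nu)$; here I would replace them by the faster-decaying functions $B(z)^\nu/((z-t)(z-s))=(s-t)^{-1}(B_{\nu,1}(z,t)-B_{\nu,1}(z,s))$ with $t,s\in\mc{T}_B$, which lie in $\H^1(E^\nu)\cap\H(E^\nu)$, and establish their completeness in $\H^1(E^\nu)$ separately, for instance by adapting the inductive decomposition of Lemma \ref{BGdense} to the $L^1$ setting.
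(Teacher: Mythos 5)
Your argument is correct for $1<p<\infty$, and for $p>2$ it takes a genuinely different route from the paper. The paper never proves continuity of the interpolation operator $T$ and extends by density; instead it applies Theorem \ref{Thm2} to the auxiliary function $\M_w(F)(z)=\big(F(z)B(w)^\nu-B(z)^\nu F(w)\big)/(z-w)$, which lies in $\H(E^\nu)$ whenever $F\in\H^p(E^\nu)$, and uses a singular-part calculus at the points of $\Tau_B$ to convert the resulting identity into $F(z)=\Lambda(F)B(z)^\nu+\sum_{t}\sum_{j}F^{(j)}(t)G_{\nu,j}(z,t)$, after which the residual functional $\Lambda$ is shown to be continuous and annihilated on the dense span of $\G_\nu$. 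The quantitative core is the same in both arguments: the coefficient bounds of Lemma \ref{est-inner-prod-norm-B} item (3) together with $K_\nu(t,t)\simeq|E(t)|^{2\nu}$, and the weighted Plancherel--P\'olya sampling inequality. On the latter, note that the paper does not find your inequality in the literature but proves it itself as Lemma \ref{PP-lemma}, via exactly the subharmonic mean-value argument you sketch plus the strip bound $|E^*(z)/E(z)|\le e^{6\tau y}$ for $0\le y\le h/2$; so you should prove it (or cite Lemma \ref{PP-lemma}) rather than invoke it as standard. Your scheme buys a cleaner logical skeleton --- two maps continuous into the entire functions with the compact-open topology agreeing on a dense subspace --- and avoids the $\M_w$/singular-part bookkeeping entirely, at the price of having to verify the operator bound $\sup_{z\in\mc{K}}|TF(z)|\lesssim_{\mc{K}}\|F\|_{E^\nu,p}$, which the paper in effect also does in its estimate \eqref{conv-est}. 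Your kernel-plus-duality density argument for $p>2$ is exactly how the paper proves the density assertion of Lemma \ref{inclusion-lemma}, so you could simply cite that lemma.

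Your treatment of $p=1$ is, however, both incomplete as written and unnecessary. Incomplete because the Hahn--Banach step needs a description of the dual of $\H^1(E^\nu)$, and the duality you recorded holds only for $p\in(1,\infty)$; the proposed ``adaptation of Lemma \ref{BGdense} to the $L^1$ setting'' is left entirely open. Unnecessary because you have already observed that $\varphi'$ is bounded above on $\R$ (Proposition \ref{upper-phase-bound}), so Lemma \ref{inclusion-lemma} gives the continuous inclusion $\H^p(E^\nu)\subset\H^2(E^\nu)$ for every $1\le p<2$: for such $p$ each $F\in\H^p(E^\nu)$ already lies in $\H(E^\nu)$ and \eqref{Lp-recont-form} follows at once from Theorem \ref{Thm2} item (1). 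This is precisely the paper's Step 1, and it also makes your kernel-density argument superfluous in the range $1<p<2$. With that substitution for $p\in[1,2)$, your proof is complete.
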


\subsection{Preliminaries} We collect facts about $H^p(E^\nu)$ that will be needed in the proof of Theorem \ref{Lp-recont-thm}.

\begin{lemma}\label{inclusion-lemma}
Assume that the phase $\p$ of $E$ has bounded derivative on $\R$. Then for $1\leq p < q <\infty$ we have $\H^p(E)\subset \H^q(E)$ continuously. Also, if $p>1$ then $\H^p(E)$ is dense in $\H^q(E)$.
\end{lemma}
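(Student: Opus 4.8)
The plan is to prove the inclusion $\H^p(E)\subset \H^q(E)$ for $1\le p<q<\infty$ by reducing everything to the $L^p(\R)$-membership of the ratio $F/E$ on the real line, exploiting the pointwise control that the bounded phase derivative provides. First I would note that by definition $F\in\H^p(E)$ means $F/E,\,F^*/E\in H^p(\C^+)$, so that $F/E\in L^p(\R)$, and similarly membership in $\H^q(E)$ is governed by $F/E\in L^q(\R)$ together with the bounded-type conditions. The bounded-type hypotheses are preserved under the inclusion (they depend only on $E$ and $F$, not on the exponent), so the entire content is to upgrade $L^p$ integrability of $F/E$ to $L^q$ integrability. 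Since $q>p$, by the standard interpolation inequality for $L^r$ spaces it suffices to combine the $L^p$ bound with a uniform $L^\infty$ bound, i.e. to show that $F/E$ is bounded on $\R$ whenever $F\in\H^p(E)$; then $\|F/E\|_{L^q}^q\le \|F/E\|_{L^\infty}^{q-p}\|F/E\|_{L^p}^p$ gives the continuous inclusion at once.

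The second step, establishing the pointwise bound $|F(x)/E(x)|\lesssim\|F\|_{E,p}$, is where the bounded phase enters. I would use the reproducing-kernel estimate \eqref{holder-Lp}, which gives $|F(w)|\le \|F\|_{E,p}\,\|K(w,\cdot)\|_{E,p'}$ for the conjugate exponent $p'$. Taking $w=x$ real and dividing by $|E(x)|$, the task becomes to control $\|K(x,\cdot)\|_{E,p'}/|E(x)|$ uniformly in $x$. Using the explicit form of the kernel and the identity \eqref{phi-id}, namely $K(x,x)=\pi^{-1}|E(x)|^2\p'(x)$, together with the off-diagonal bound $|K(x,y)|\le K(x,x)^{1/2}K(y,y)^{1/2}$ coming from \eqref{CS-ineq}, one expresses the relevant integral in terms of $\p'$. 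The hypothesis that $\p'$ is bounded on $\R$ is exactly what makes $\|K(x,\cdot)\|_{E,p'}/|E(x)|$ bounded uniformly in $x$, yielding the desired uniform bound on $F/E$.

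For the density statement when $p>1$, the plan is to exhibit a dense subspace common to both spaces. The natural candidate is the span of the reproducing kernels $K(w,\cdot)$ (or equivalently the span of $\mc{B}_1=\{B/(\cdot-t)\}$), which lies in $\H^r(E)$ for every $r$ because $\|K(w,\cdot)\|_{E,r}<\infty$ for all $1<r\le\infty$ as already observed before \eqref{holder-Lp}. To see this span is dense in $\H^q(E)$, I would invoke the duality $\H^q(E)'=\H^{q'}(E)$ recalled in the preliminaries: any functional annihilating all the kernels is represented by some $\Lambda\in\H^{q'}(E)$ with $\langle\Lambda,K(w,\cdot)\rangle=\overline{\Lambda(w)}=0$ for all $w$, forcing $\Lambda\equiv0$, so the span is dense. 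Since this span also sits inside $\H^p(E)$, the inclusion $\H^p(E)\subset\H^q(E)$ has dense image.

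The main obstacle I anticipate is the careful verification of the uniform kernel bound in the second step, since one must handle the $L^{p'}$ norm of $K(x,\cdot)$ rather than just its diagonal value, and the endpoint cases ($p=1$, forcing $p'=\infty$, and the density argument needing $p>1$ so that $q'<\infty$ and the duality applies) require separate attention. Establishing that $\p'$ bounded genuinely yields a uniform-in-$x$ estimate for $\|K(x,\cdot)\|_{E,p'}/|E(x)|$—and not merely a pointwise-finite one—is the crux, as this is precisely the place where the hypothesis is used in an essential rather than incidental way.
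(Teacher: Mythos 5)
Your proposal is correct, and on the density half it is exactly the paper's argument: the paper disposes of density with ``Hahn--Banach plus the reproducing kernel property \eqref{rep-ker-Lp}'', which is precisely your scheme (kernels $K(w,\cdot)$ lie in $\H^p(E)$ for $p>1$ since $\|K(w,\cdot)\|_{E,r}<\infty$ for $r>1$; an annihilating functional is represented by $\Lambda\in\H^{q'}(E)$ with $q'<\infty$, and \eqref{rep-ker-Lp} forces $\Lambda(w)=0$ for all $w$). On the inclusion half you diverge from the paper only in that the paper outsources it entirely to \cite[Lemma 9]{Gon}, while you reconstruct that lemma's proof: the bound $|F(x)|\le\|F\|_{E,p}\|K(x,\cdot)\|_{E,p'}$, a uniform estimate $\|K(x,\cdot)\|_{E,p'}\lesssim|E(x)|$, and then $\|g\|_{L^q}^q\le\|g\|_{L^\infty}^{q-p}\|g\|_{L^p}^p$. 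One correction to how you label the ingredients there: the Cauchy--Schwarz bound $|K(x,y)|\le K(x,x)^{1/2}K(y,y)^{1/2}$ is the \emph{near-diagonal} estimate — with \eqref{phi-id} and $\sup_\R\varphi'<\infty$ it gives $|K(x,y)|\lesssim|E(x)||E(y)|$ with no decay, so it cannot by itself make $\|K(x,\cdot)\|_{E,p'}$ finite when $p'<\infty$; the off-diagonal decay must come from the explicit formula $\pi(y-x)K(x,y)=B(y)A(x)-A(y)B(x)$ together with $|A|,|B|\le|E|$ on $\R$, which yields $|K(x,y)|/(|E(x)||E(y)|)\lesssim\min\big(1,|x-y|^{-1}\big)$, an $L^{p'}(\dy)$ function uniformly in $x$ for every $p'>1$ (the endpoint $p=1$, $p'=\infty$, needs only Cauchy--Schwarz). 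Since you do list ``the explicit form of the kernel'' among your tools, this is a mislabeling rather than a missing idea. Two smaller points: your reduction of $\H^q$-membership to boundary $L^q$-integrability tacitly uses Smirnov's maximum principle ($F/E\in H^p\subset N^+$ and $F/E\in L^q(\R)$ imply $F/E\in H^q(\C^+)$), which deserves a sentence; and you should drop the parenthetical ``or equivalently the span of $\mc{B}_1$'' in the density step, since vanishing of $\Lambda\in\H^{q'}(E)$ on $\Tau_B$ alone does not obviously force $\Lambda\equiv 0$, and the kernel span over all $w\in\C$ already does the job.
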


\begin{proof}
The inclusion part is \cite[Lemma 9]{Gon}. The second part is an straightforward application of the Hahn-Banach Theorem and the reproducing kernel property \eqref{rep-ker-Lp}.
\end{proof}

The following proposition collects relations between the condition $\frac{E'}{E} \in H^\infty(\C^+)$ and boundedness of the differentiation operator on $\mc{H}^p(E^\nu)$. It is not clear if boundedness of differentiation on $\mc{H}^p(E^\nu)$ for $\nu\ge 2$ and $E'/E\in H^\infty(\C^+)$ are equivalent. We are not able to prove it for $p\neq 2$, but we are also not aware of any counterexample. 

We say that the zeros $\ov w_n=x_n-iy_n$ of $E$ are separated from the real line if $\inf_n y_n>0$.

\begin{proposition}\label{equiv-con-Lp-prop}
Let $\nu\geq 2$ and $p\in (1,\infty)$.
\begin{enumerate}
\item If  $E'/E\in H^\infty(\C^+)$ then $\H^p(E^\nu)$ is closed under differentiation and the zeros of $E$ are separated from the real line.
\item Assume $\H^p(E^\nu)$ is closed under differentiation. Then $E'/E\in H^\infty(\C^+)$ if and only if the zeros of $E$ are separated from the real line.
\item If $\H^p(E^\nu)$ is closed under differentiation and $v(E^*/E)<0$ then $E'/E\in H^\infty(\C^+)$.
\end{enumerate}
\end{proposition}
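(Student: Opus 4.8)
The plan is to read the three items as variations on a single dichotomy between the analytic condition $E'/E\in H^\infty(\C^+)$ and the two geometric conditions (closedness under differentiation, separation of the zeros of $E$ from $\R$), and to route the substantive implications through the $L^p$ analogues of Baranov's results \cite{Bar,Bar2}. I would isolate at the outset the elementary implication $E'/E\in H^\infty(\C^+)\Rightarrow$ (zeros separated from $\R$), which is common to (1) and to the forward direction of (2).

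Concretely, if $E'/E\in H^\infty(\C^+)$ then $E$ has no real zeros, since near a real zero $x_0$ one has $E'/E\sim m/(z-x_0)$ and hence $|E'(x_0+iy)/E(x_0+iy)|\to\infty$ as $y\downarrow 0$, contradicting boundedness in $\C^+$. Thus $\varphi$ is defined and continuous on $\R$ and, by \eqref{phi-id}, $\varphi'(x)=\re\{iE'(x)/E(x)\}\le\|E'/E\|_{H^\infty(\C^+)}$. Applying \cite[Theorem 3.2]{Bar} to $E^\nu$ (its proof is valid for all $p$) gives that $\H^p(E^\nu)$ is closed under differentiation, which is the first assertion of (1), and then \cite[Proposition 1.2]{Bar2} shows $E$ is of exponential type. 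This licenses the Hadamard product and the phase representation $\varphi'(x)=\re a+\sum_n y_n/((x-x_n)^2+y_n^2)$ used in Proposition \ref{upper-phase-bound}; evaluating at $x=x_n$ gives $\varphi'(x_n)\ge 1/y_n$, whence $y_n\ge\|E'/E\|_{H^\infty(\C^+)}^{-1}>0$, the desired separation. This finishes (1) and the forward direction of (2).

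For the remaining implications I would invoke \cite[Theorem A]{Bar2} in the form used in the proof of Theorem \ref{Thm1}: for $E$ of exponential type, separation of the zeros from $\R$ yields $E'/E\in H^\infty(\C^+)$. In (2), closedness under differentiation supplies exponential type through (the $L^p$ form of) \cite[Proposition 1.2]{Bar2}, so together with the assumed separation \cite[Theorem A]{Bar2} delivers $E'/E\in H^\infty(\C^+)$; combined with the previous paragraph this is the equivalence. Item (3) again reduces to the same endpoint once the zeros are shown to be separated from $\R$, and this is the point where the hypothesis $v(E^*/E)<0$ must be used.

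The main obstacle is precisely this last step. In the $L^2$ theory separation came from the bound $\varphi'\le\D\sqrt{\nu}$ of Proposition \ref{upper-phase-bound}(1), whose proof tests against the reproducing-kernel function $E^{\nu-2}BK(t,\cdot)$ and exploits the Hilbert-space identities for $\|\cdot\|_E$; this does not transfer verbatim to $p\neq 2$, which is exactly why (2) must \emph{assume} separation. I expect to replace the Hilbert-space step by a duality estimate: from \eqref{holder-Lp} and boundedness of differentiation, $|F'(t)|\le\D\,\|F\|_{E^\nu,p}\,\|K_\nu(t,\cdot)\|_{E^\nu,p'}$, and testing against a function vanishing to first order at $t$ should force an upper bound on $\varphi'(t)$. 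The condition $v(E^*/E)<0$ (positive exponential type, which excludes the degenerate behaviour $\varphi'\to0$ of the example in the introduction) is what I would use to control $\|K_\nu(t,\cdot)\|_{E^\nu,p'}$ and to guarantee that the test functions lie in $\H^p(E^\nu)$ with comparable norms; once $\varphi'$ is bounded, $\varphi'(x_n)\ge 1/y_n$ gives separation and \cite[Theorem A]{Bar2} concludes. Making this $L^p$ test-function argument quantitative under $v(E^*/E)<0$ is where I anticipate the real work to lie.
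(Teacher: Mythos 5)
Your treatment of items (1) and (2) is essentially sound and close to the paper's route, with one caveat. The paper does not claim that the proof of \cite[Theorem 3.2]{Bar} ``is valid for all $p$''; that parenthetical assertion of yours is unverified and is precisely the point at issue for $p\neq 2$. Instead the paper first settles everything at $p=2$ (Theorem \ref{Thm1} gives closedness of $\H^2(E^\nu)$ under differentiation, Proposition \ref{upper-phase-bound} gives separation of the zeros from $\R$, and \cite[Proposition 1.2]{Bar2} gives exponential type) and only then invokes \cite[Theorem A]{Bar2}, which is the genuinely $L^p$ statement, to conclude that $\H^p(E^\nu)$ is closed under differentiation. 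Your elementary derivation of separation, $\varphi'(x)\le \|E'/E\|_{H^\infty(\C^+)}$ together with $\varphi'(x_n)\ge 1/y_n$ from the Hadamard representation, is correct and arguably cleaner than routing through Proposition \ref{upper-phase-bound} (it does not need boundedness of differentiation), but you should repair the forward step of (1) by citing \cite[Theorem A]{Bar2} rather than an unproved $L^p$ extension of the $L^2$ argument. Your item (2) matches the paper, and you even make explicit the step the paper leaves implicit, namely that closedness under differentiation supplies exponential type via \cite[Proposition 1.2]{Bar2} before \cite[Theorem A]{Bar2} can be applied.

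The genuine gap is item (3). The paper's proof is a one-line citation: \cite[Theorem A]{Bar2} already covers the case $v(E^*/E)<0$, i.e., when the inner function $E^*/E$ carries a nontrivial exponential factor, bounded differentiation on $\H^p(E^\nu)$ directly yields $E'/E\in H^\infty(\C^+)$ without any separation hypothesis. You instead propose a new argument: a duality estimate $|F'(t)|\le \D\,\|F\|_{E^\nu,p}\,\|K_\nu(t,\cdot)\|_{E^\nu,p'}$ tested against functions vanishing to first order at $t$, with $v(E^*/E)<0$ used to control $\|K_\nu(t,\cdot)\|_{E^\nu,p'}$ and thereby bound $\varphi'$, after which separation and \cite[Theorem A]{Bar2} would conclude. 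As written this is a program, not a proof: you do not construct the test functions, do not establish the claimed two-sided control of the kernel norms for $p'\neq 2$ (the identities $\|K(t,\cdot)\|_E^2=K(t,t)$ and $F'(t)=A(t)^{\nu-1}B'(t)^2/\pi$ that drive the $L^2$ proof of Proposition \ref{upper-phase-bound} have no stated $L^{p'}$ analogues), and you explicitly acknowledge that ``the real work'' remains. So item (3) is unproved in your proposal, whereas the statement you would need at the end of your chain is exactly what \cite[Theorem A]{Bar2} provides at the outset; you should either quote that theorem in the form covering positive mean type, or actually carry out the quantitative $L^p$ test-function estimate you sketch.
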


\begin{proof}
{ Item} $(1)$. If $E'/E\in H^\infty(\C^+)$ then by Theorem \ref{Thm1} the space $\H^2(E^\nu)$ is closed under differentiation. By Proposition \ref{upper-phase-bound} the zeros of $E$ are separated from the real line, and  by \cite[Proposition 1.2]{Bar2} the function $E$ is of exponential type. We can now apply \cite[Theorem A]{Bar2} to deduce that $\H^p(E^\nu)$ is closed under differentiation.

{ Item} $(2)$. Assume $\H^p(E^\nu)$ is closed under differentiation. If $E'/E\in H^\infty(\C^+)$ then by Theorem \ref{Thm1} the space $\H^2(E^\nu)$ is closed under differentiation.  By Lemma \ref{upper-phase-bound} the zeros of $E$ are separated from the real line. Conversely, if the zeros of $E$ are separated from the real line then  \cite[Theorem A]{Bar2} implies $E'/E\in H^\infty(\C^+)$.

{ Item} $(3)$. This is a direct consequence of \cite[Theorem A]{Bar2}.
\end{proof}

We say that a sequence of real numbers $\{\lambda_n\}$ is $\ep$-separated, for some $\ep>0$, if $|\lambda_n-\lambda_n|\geq \ep$ for every $m\neq n$. We now prove a generalized (weighted) version of the P\'olya-Plancherel theorem (see \cite{PP}). 

\begin{lemma}\label{PP-lemma}
Let $E$ be a Hermite-Biehler function with zeros $\ov w_n = x_n-iy_n$ such that $h=\inf_n y_n >0$ and $\tau =\sup_x \varphi'(x)<\infty$. If  $\{\lambda_n\}$ is an $\ep$-separated sequence of real numbers, $p\in[1,\infty)$ and $F\in\H^p(E)$, then
$$
\sum_{n} \bigg|\frac{F(\lambda_n)}{E(\lambda_n)}\bigg|^{p} \leq \frac{1+e^{6\tau p \alpha}}{\pi\alpha}\int_{\R}\bigg|\frac{F(x)}{E(x)}\bigg|^p\dx
$$
where $\alpha=\min\{\ep/2,h/2\}$.
\end{lemma}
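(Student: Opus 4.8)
The plan is to prove the weighted P\'olya-Plancherel inequality by a sub-mean-value argument applied to the subharmonic function $|F/E|^p$, localized to small disks centered at the sampling points $\lambda_n$. The key structural fact I would exploit is that if $E'/E$ is controlled (here via $h=\inf_n y_n>0$ and $\tau=\sup_x\varphi'(x)<\infty$), then $|E(z)|$ does not oscillate too wildly as one moves a bounded distance off the real axis, so that the value $|F(\lambda_n)/E(\lambda_n)|^p$ at a point can be compared to the average of $|F/E|^p$ over a disk of radius $\alpha=\min\{\ep/2,h/2\}$ around $\lambda_n$.

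First I would note that since $h=\inf_n y_n>0$, the zeros $\ov w_n=x_n-iy_n$ all lie at distance at least $h$ below the real axis, so the disk $D(\lambda_n,\alpha)$ with $\alpha\le h/2$ stays in $\C^+\cup\R$ bounded away from the zeros of $E$; in particular $F/E$ is holomorphic there (recall $F\in\H^p(E)$ means $F/E\in H^p(\C^+)$), and $|F/E|^p$ is subharmonic. Hence by the sub-mean-value property,
\es{
\bigg|\frac{F(\lambda_n)}{E(\lambda_n)}\bigg|^p \le \frac{1}{\pi\alpha^2}\int_{D(\lambda_n,\alpha)}\bigg|\frac{F(z)}{E(z)}\bigg|^p\,\d A(z),
}
where $\d A$ is area measure. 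Because the $\{\lambda_n\}$ are $\ep$-separated and $\alpha\le\ep/2$, the disks $D(\lambda_n,\alpha)$ are pairwise disjoint, so summing over $n$ controls the left side by an integral of $|F/E|^p$ over the strip $\{|\im z|\le\alpha\}$ rather than over $\R$ itself. The remaining task is to convert this area integral over the strip into the boundary integral $\int_\R|F(x)/E(x)|^p\,\dx$, and this is where the factor $1+e^{6\tau p\alpha}$ enters.

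The main obstacle, and the crux of the argument, is the passage from the value of $|F/E|^p$ at height $y$ in the strip back to its boundary values on $\R$. The natural tool is the Hardy-space growth estimate: for $G=F/E\in H^p(\C^+)$ one has good control of $\int_\R|G(x+iy)|^p\,\dx$ in terms of $\int_\R|G(x)|^p\,\dx$, but since we are integrating $|F(z)/E(z)|^p$ with $z$ off the axis we must compare $|E(x+iy)|$ to $|E(x)|$. Here the hypothesis $\tau=\sup\varphi'<\infty$ is decisive: logarithmic differentiation gives $\frac{\d}{\dy}\log|E(x+iy)|=\re\{iE'/E\}$, which is controlled by $\varphi'$ up to the mean type contribution, so moving a vertical distance $\alpha$ changes $\log|E|$ by at most a constant multiple of $\tau\alpha$, producing the exponential factor $e^{6\tau p\alpha}$ after accounting for both the numerator's Hardy bound and the denominator's variation; the summand $1$ records the contribution of the lower half $\{-\alpha\le\im z\le 0\}$ of each disk where $G$ need not be holomorphic but the boundary values suffice. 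I would assemble the explicit constant by integrating the disk bound in $y$ over $[-\alpha,\alpha]$, splitting into $y\ge 0$ and $y<0$, and invoking the above pointwise comparison of $|E(x+iy)|$ with $|E(x)|$ together with the standard $H^p$ inequality $\|G(\cdot+iy)\|_{L^p}\le\|G\|_{L^p}$ for $y>0$.
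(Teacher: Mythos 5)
Your overall skeleton (sub-mean-value of the subharmonic function $|F/E|^p$ on pairwise disjoint disks $D(\lambda_n,\alpha)$, then converting the resulting strip integral into a boundary integral at a cost of $1+e^{6\tau p\alpha}$) is exactly the paper's, but the crux of the lemma --- the passage from horizontal slices at \emph{negative} height back to $\R$ --- is where your sketch has a genuine gap, and it contains misstatements. First, $D(\lambda_n,\alpha)$ does not stay in $\C^+\cup\R$: it dips down to $\im z=-\alpha$. Subharmonicity there is legitimate, but for the correct reason: every zero of $E$ satisfies $y_n\ge h$, so $E$ is zero-free and $F/E$ is holomorphic in the half-plane $\{\im z>-h\}$ (this is how the paper argues), not because $F/E\in H^p(\C^+)$. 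Your later remark that in the lower half of the disk ``$G$ need not be holomorphic but the boundary values suffice'' is backwards: $F/E$ \emph{is} holomorphic there, and ``boundary values suffice'' is not an argument --- $F/E$ belongs to no Hardy class of the lower strip, and bounding $\int_\R |F(x-iy)/E(x-iy)|^p\,\dx$ for $0<y\le\alpha$ is precisely the nontrivial task.

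Second, the mechanism you propose for those lower slices omits the decisive ingredients. You never invoke $F^*/E\in H^p(\C^+)$, which is part of the definition of $\H^p(E)$ and is the only Hardy-space information available below the axis: the paper reflects via $|F(x-iy)/E(x-iy)|=|F^*(x+iy)/E^*(x+iy)|$ and then pays for replacing $E^*$ by $E$ through the inner function $\Theta^*=E/E^*$, proving $|\Theta^*(x+iy)|\le e^{6\tau y}$ for $0\le y\le h/2$ from the Nevanlinna factorization $E^*(z)/E(z)=e^{2aiz}\prod_n\frac{1-z/w_n}{1-z/\ov w_n}$ together with the elementary inequality $y_n^2-y^2\le 3(y_n-y)^2$, valid because $y\le h/2\le y_n/2$ --- this is exactly where the hypothesis $h>0$ enters. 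Your substitute claim, that moving a vertical distance $\alpha$ changes $\log|E|$ by at most a constant multiple of $\tau\alpha$, is this same nontrivial estimate in disguise: $\re\{iE'(z)/E(z)\}$ at height $y\neq 0$ is \emph{not} bounded by $\sup_x\varphi'(x)$ without a factorization of $E$ (or of $E^*/E$) and a term-by-term comparison of $\frac{y_n\pm y}{(x-x_n)^2+(y_n\pm y)^2}$ with $\frac{y_n}{(x-x_n)^2+y_n^2}$ using $y\le h/2$; you assert it rather than prove it. Relatedly, your accounting of the constant is reversed: the summand $1$ comes from the \emph{upper} half-strip, where $F/E\in H^p(\C^+)$ gives $\|(F/E)(\cdot+iy)\|_{L^p}\le\|F/E\|_{L^p(\R)}$ with no exponential loss, while $e^{6\tau p\alpha}$ is the cost of the reflected lower half-strip. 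Your route could be repaired (compare $\log|E|$ along vertical segments using a Hadamard product of $E$, then use $|F(x-iy)|=|F^*(x+iy)|$ and the Hardy bound for $F^*/E$), and would then essentially coincide with the paper's proof, but as written the key step is missing.
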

\begin{proof}  

{\it Step 1.} Since $E^*/E$ is bounded on the upper half-plane and has modulo one in the real line, by Nevanlinna's factorization (see \cite[Theorem 8]{dB}) we obtain
$$
\Theta(z):=\frac{E^*(z)}{E(z)} = e^{2aiz}\prod_n{\frac{1-z/w_n}{1-z/\ov w_n}}
$$
where $2a =-v(E^*/E)\geq 0$ ($v(\cdot)$ denotes the mean type of a function defined in \eqref{mean-type}). If $z=x+iy$ with $y\geq 0 $ we have the following identity
\est{
\frac{1}{2}\partial_y \log|\Theta^*(z)| = & a + \sum_n \frac{y_n[(x-x_n)^2 + y_n^2-y^2]}{|z-\ov w_n|^2|z-w_n|^2}.
}
If $0\leq y\leq h/2$ then $y_n^2-y^2\leq 3(y_n-y)^2$ and we deduce that
\est{
\frac{1}{2}\partial_y \log|\Theta^*(z)| \leq a + 3\sum_n \frac{y_n}{(x-x_n^2)+y_n^2} & \leq \frac{3}{2}\partial_y \log|\Theta^*(x)| \\ & = 3\p'(x)\leq 3\tau.
}
Integrating in $y$, we obtain $|\Theta^*(z)| \leq e^{6\tau y}$ for $0 \leq y \leq h/2$.

{\it Step 2.} Let $\alpha=\min\{h/2,\ep/2\}$. Since the function $|F(z)/E(z)|^p$ is sub-harmonic in the half-plane $\im z> -h$, its value at the center of a disk is not greater than its mean value over the disk. We obtain
\est{
|F(\lambda_n)/E(\lambda_n)|^p \leq &\frac{1}{\pi \alpha^2}\int_{0}^{\alpha} \int_{0}^{2\pi}|F(\lambda_n+\rho e^{it})/E(\lambda_n+\rho e^{it})|^p\dt\rho\d\rho \\
\leq  & \frac{1}{\pi \alpha^2} \int_{-\alpha}^{\alpha} \int_{\lambda_n-\alpha}^{\lambda_n+\alpha}|F(x+iy)/E(x+iy)|^p\dx\dy.
}
Using the separability of $\{\lambda_n\}$ we can sum the above inequality for all values of $n$ to obtain
\est{
\sum_n  |F(\lambda_n)/E(\lambda_n)|^p \leq & \frac{1}{\pi \alpha^2}\int_{-\alpha}^{\alpha} \int_{\R}|F(x+iy)/E(x+iy)|^p\dx\dy \\
 = \, & \frac{1}{\pi \alpha^2}\int_{0}^{\alpha} \int_{\R}|F(x+iy)/E(x+iy)|^p\dx\dy \\
  &  \,\,\,\,\,\,\,\,\,\, +\frac{1}{\pi \alpha^2}\int_{0}^{\alpha} \int_{\R}|F^*(x+iy)/E^*(x+iy)|^p\dx\dy.
}
Since $|\Theta^*(z)| \leq e^{6\tau y}$ for $0\leq y\leq h/2$ we conclude that
$$
\sum_n  |F(\lambda_n)/E(\lambda_n)|^p \leq \frac{1}{\pi \alpha^2}\int_{0}^{\alpha} \int_{\R}\frac{|F(x+iy)|^p+ e^{6\tau p \alpha}|F^*(x+iy)|^p}{|E(x+iy)|^p}\dx\dy.
$$
By definition, $F/E$ and $F^*/E$ belong to the Hardy space $H^p(\C^+)$. A basic property of Hardy spaces is that 
$$
\sup_{y>0}\|G(x+iy)\|_{L^p} = \lim_{y\to 0} \|G(x+iy)\|_{L^p}
$$
for every $G\in H^p(\C^+)$ (see \cite[Theorems 13.2 and 13.5]{Ma}). Using this last fact we obtain
$$
\int_{\R}\frac{|F(x+iy)|^p+ e^{6\tau p\alpha}|F^*(x+iy)|^p}{|E(x+iy)|^p}\dx \leq  (1+e^{6\tau p\alpha})\int_{\R}\bigg|\frac{F(x)}{E(x)}\bigg|^p\dx
$$
for every $y>0$. This concludes the lemma.
\end{proof}

\subsection{Proof of Theorem \ref{Lp-recont-thm}} 
We show first that the singular part of the function $F(z)/B(z)^\nu$ at a given zero $t\in\Tau_B$ is
$$
\sum_{j=0}^{\nu-1}F^{(j)}(t)\frac{G_{\nu,j}(z,t)}{B^\nu(z)} = \sum_{j=0}^{\nu-1} F^{(j)}(t)\frac{P_{\nu,\nu-j-1}(z,t)}{j!(z-t)^{\nu-j}}.
$$

To see this, define for any complex number $w$ a linear operator $\Sg_w$ on the space of meromorphic functions by
\est{
\Sg_w(G)(z)=\sum_{n\leq -1} g_n(z-w)^n
} 
if $G$ has the  series representation
$$
G(z)=\sum_{n\in\Z} g_n(z-w)^n
$$
about $z=w$. That is, $\Sg_w(G)$ is defined as the singular part of the function $G$ at the point $z=w$. Since $G$ is meromorphic, $\Sg_w(G)$ is always a rational function. It is a simple, but useful characterization that $S_w(G)$ is the unique rational function $R$ having exactly one pole which is located at $z=w$, such that $G(z)-R$ has a removable singularity at the point $z=w$ and 
\es{\label{eq-4}
\lim_{|z|\to\infty} R(z)=0.
}

Using \eqref{intro-Gdef} we obtain
\es{\label{eq-5}
\Sg_t\bigg(\frac{F}{B^\nu}\bigg)(z) = \sum_{j=0}^{\nu-1} F^{(j)}(t)\frac{P_{\nu,\nu-j-1}(z,t)}{j!(z-t)^{\nu-j}} = \sum_{j=0}^{\nu-1} F^{(j)}(t)\frac{G_{\nu,j}(z,t)}{B(z)^\nu}.
}

Now, for a given complex number $w\in\C$ we define another linear operator $\M_w$ on the space of entire functions $F$ by
\est{
\M_w(F)(z)=\frac{F(z)B(w)^\nu-B(z)^\nu F(w)}{z-w}.
}
We observe that for every $t\in\Tau_B$ and every $w\in\C\setminus \Tau_B$ we have
\es{\label{id-Mw-Sw}
\Sg_t\bigg(\frac{\M_w(F)(\cdot)}{B(w)^\nu B(\cdot)^\nu}\bigg)(z) = \frac{\Sg_t(F/B^\nu)(z)-\Sg_t(F/B^\nu)(w)}{z-w}.
 }
One can deduce this last identity by observing that  
\est{
\frac{\M_w(F)(z)}{B(w)^\nu B(z)^\nu}-\frac{\Sg_t(F/B^\nu)(z)-\Sg_t(F/B^\nu)(w)}{z-w}
}
has a removable singularity at the point $z=w$ and also that the right hand side of \eqref{id-Mw-Sw} is a rational function in the variable $z$ with exactly one pole located at $z=t$ and it satisfies condition \eqref{eq-4}.

{ \it Step 1.} We begin with the case $p\in[1,2)$. The assumption that $E'/E\in H^\infty(\C^+)$ implies with Theorem \ref{Thm1} that $\H^2(E^\nu)$ is closed under differentiation. Since $\p'(x)=\re i\frac{E'(x)}{E(x)}$, we can apply Lemma \ref{inclusion-lemma} to conclude that $\H^p(E^\nu)\subset \H^2(E^\nu)$, hence formula \eqref{Lp-recont-form} is a direct consequence of Theorem \ref{Thm2}.

{\it Step 2.}  Now, we deal with the case $p\in(2,\infty)$. A crucial observation is that if $F\in\H^p(E^\nu)$ then $\M_w(F)\in\H^2(E^\nu)$. Thus, we can apply Theorem \ref{Thm2} together with \eqref{eq-5} to obtain
$$
\M_w(F)(z) = \sum_{t\in\Tau_B} B(z)^\nu\Sg_t\bigg(\frac{\M_w(F)(\cdot)}{B(\cdot)^\nu}\bigg)(z)
$$
where the last sum converges uniformly in the variable $z$ in every compact subset of $\C$ for every fixed $w\in\C$. By \eqref{id-Mw-Sw}, we conclude that
\es{\label{preliminar-int-form}
\frac{F(z)}{B(z)^\nu}-\frac{F(w)}{B(w)^\nu} =  \sum_{t\in\Tau_B} \bigg\{ \sum_{j=0}^{\nu-1} F^{(j)}(t)\frac{G_{\nu,j}(z,t)}{B(z)^\nu}-\sum_{j=0}^{\nu-1} F^{(j)}(t)\frac{G_{\nu,j}(w,t)}{B(w)^\nu}\bigg\}
}
for every $w,z\in\C\setminus\Tau_B$. We claim that \eqref{preliminar-int-form} implies that
\es{\label{partial-rec-form}
F(z) = \Lambda(F)B(z)^\nu + \sum_{t\in\Tau_B}  \sum_{j=0}^{\nu-1} F^{(j)}(t)G_{\nu,j}(z,t)
}
for some constant $\Lambda(F)$, where the sum converges uniformly in compact sets of $\C$. Assuming that \eqref{partial-rec-form} is valid, clearly the map $F\mapsto \Lambda(F)$ defines a linear functional in the space $\H^p(E^\nu)$. In the next steps we will show that formula \eqref{partial-rec-form} holds and that $F\mapsto \Lambda(F)$ is a continuous functional that vanishes in a dense set of functions in $\H^p(E^\nu)$ hence it vanishes identically. This would conclude the proof.

{\it Step 3.} Recall that $a_{\nu,j}(t)$ is defined as the coefficients of the Taylor expansion of $B_{\nu,\nu}(z,t)^{-1}$ at the point $z=t$ with $t\in\Tau_B$. By item (3) Lemma \ref{est-inner-prod-norm-B} these coefficients satisfy the estimate
$$
|a_{\nu,j}(t)|^2 \lesssim \frac{1}{K_\nu(t,t)}
$$
for $j=0,...,\nu-1$, where $\D$ is the norm of the differentiation operator in $\H^2(E^\nu)$ and $K_\nu(w,z)$ is the reproducing kernel associated with $E(z)^\nu$. Since
$$
G_{\nu,j}(z,t)=\frac{B(z)^\nu}{j!}\sum_{\ell=0}^{\nu-j-1}\frac{a_{\nu,\ell}(t)}{(z-t)^{\nu-\ell-j}},
$$
we obtain
\es{\label{eq-7}
K_\nu(t,t)|G_{\nu,j}(i,t)|^2 \lesssim \frac{1}{1+t^2}
}
for every $t\in\Tau_B$ and $j=0,...,\nu-1$.

The condition $E'/E\in H^\infty(\C^+)$ implies, by Proposition \ref{equiv-con-Lp-prop}, that $\H^p(E^\nu)$ is closed under differentiation and that the zeros of $E$ are separated from the real line. Also, the assumption that $\p'(t)\geq \delta$ for all $t\in\Tau_B$ together with formula \eqref{phi-id} implies that
\es{\label{eq-9}
|E(t)|^{2\nu}\simeq K_\nu(t,t),
}
for all $t\in\mc{T}_B$.  From the condition $E'/E\in H^\infty(\C^+)$ we obtain with Theorem \ref{Thm1} and \eqref{zero-separation} that $\Tau_B$ is a sequence of uniformly separated points. We can now apply Proposition \ref{PP-lemma} together with \eqref{eq-9} to obtain
\es{\label{eq-10}
\sum_{t\in\Tau_B} \bigg|\frac{F(t)}{K_\nu(t,t)^{1/2}}\bigg|^{p} \lesssim \int_{\R}\bigg|\frac{F(x)}{E(x)^\nu}\bigg|^p\dx
}
for every $F\in\H^p(E^\nu)$. Finally, we obtain the following estimate
\es{\label{conv-est}
\sum_{j=0}^{\nu-1} \sum_{t\in\Tau_B}|F^{(j)}(t)G_{\nu,j}(i,t)|
& \leq \sum_{j=0}^{\nu-1}  \bigg[\sum_{t\in\Tau_B}\bigg|\frac{F^{(j)}(t)}{K_\nu(t,t)^{1/2}}\bigg|^{p} \bigg]^{1/p}\bigg[\sum_{t\in\Tau_B} \bigg|\frac{G_{\nu,j}(i,t)}{K_\nu(t,t)^{-1/2}}\bigg|^{p'} \bigg]^{1/p'} \\
& \lesssim \sum_{j=0}^{\nu-1}  \bigg[\sum_{t\in\Tau_B}\bigg|\frac{F^{(j)}(t)}{K_\nu(t,t)^{1/2}}\bigg|^{p}\bigg]^{1/p} \\ 
& \lesssim \|F(x)/E(x)^\nu\|_{L^p},
}
where the first inequality is H\"older's inequality, the second one due to \eqref{eq-7} and the separation of $\Tau_B$, the third one due to \eqref{eq-10} and the closure under differentiation of $\H^p(E^\nu)$. 

{\it Step 4.} Estimate \eqref{conv-est} together with formula \eqref{preliminar-int-form} for $w=i$ clearly implies that \eqref{partial-rec-form} is valid. We can use H\"older's inequality \eqref{holder-Lp} together with \eqref{conv-est} again to conclude that $F\mapsto \Lambda(F)$ is a continuous functional over $\H^p(E^\nu)$. By Lemmas \ref{BGdense} and \ref{inclusion-lemma} the functions $\{G_{\nu,j}(z,t)\}$ for $j=0,...,\nu-1$ and $t\in\Tau_B$ form a dense set in $\H^p(E^\nu)$ and trivially $\Lambda(G_{\nu,j}(z,t))=0$. Hence $\Lambda$ vanishes identically. This concludes the proof.


\end{document}